\title[Solving Chisini's functional equation]{Solving Chisini's functional equation}
\author{Jean-Luc Marichal}
\address{Mathematics Research Unit, FSTC, University of Luxembourg \\
6, rue Coudenhove-Kalergi, L-1359 Luxembourg, Luxembourg} \email{jean-luc.marichal[at]uni.lu}
\date{March 17, 2010}
\begin{document}

\theoremstyle{plain}
\newtheorem{theorem}{Theorem}[section]
\newtheorem{lemma}[theorem]{Lemma}
\newtheorem{proposition}[theorem]{Proposition}
\newtheorem{corollary}[theorem]{Corollary}
\newtheorem{fact}[theorem]{Fact}

\theoremstyle{definition}
\newtheorem{definition}[theorem]{Definition}
\newtheorem{example}[theorem]{Example}

\theoremstyle{remark}
\newtheorem{conjecture}[theorem]{Conjecture}
\newtheorem{remark}[theorem]{Remark}

\newcommand{\N}{\mathbb{N}}
\newcommand{\R}{\mathbb{R}}
\newcommand{\I}{\mathbb{I}}
\newcommand{\J}{\mathbb{J}}
\newcommand{\F}{\mathsf{F}}
\newcommand{\G}{\mathsf{G}}
\newcommand{\M}{\mathsf{M}}
\newcommand{\U}{\mathsf{U}}
\newcommand{\Vspace}{\vspace{2ex}}

\newcommand{\ran}{\mathrm{ran}}
\newcommand{\dom}{\mathrm{dom}}
\newcommand{\id}{\mathrm{id}}
\newcommand{\bfx}{\mathbf{x}}
\newcommand{\bfy}{\mathbf{y}}
\newcommand{\bfz}{\mathbf{z}}
\newcommand{\bfe}{\mathbf{e}}

\newcommand{\Min}{\mathsf{Min}}
\newcommand{\Max}{\mathsf{Max}}
\newcommand{\Med}{\mathsf{Med}}
\def\relstack#1#2{\mathrel{\mathop{#1}\limits_{#2}}}

\begin{abstract}
We investigate the $n$-variable real functions $\G$ that are solutions of the Chisini functional equation
$\F(\bfx)=\F(\G(\bfx),\ldots,\G(\bfx))$, where $\F$ is a given function of $n$ real variables. We provide necessary and sufficient conditions on
$\F$ for the existence and uniqueness of solutions. When $\F$ is nondecreasing in each variable, we show in a constructive way that if a
solution exists then a nondecreasing and idempotent solution always exists. We also provide necessary and sufficient conditions on $\F$ for the
existence of continuous solutions and we show how to construct such a solution. We finally discuss a few applications of these results.
\end{abstract}

\keywords{Chisini's functional equation, Chisini mean, level surface mean, Shepard's metric interpolation, idempotency, quasi-idempotency,
quasi-inverse function.}

\subjclass[2010]{26E60, 39B12, 39B22 (Primary) 26B05, 26B35 (Secondary)}

\maketitle

\section{Introduction}

Let $\I$ be any nonempty real interval, bounded or not, and let $\F\colon\I^n\to\R$ be any given function. We are interested in the solutions
$\G\colon\I^n\to\I$ of the following functional equation
\begin{equation}\label{eq:FuncEqFG}
\F(x_1,\ldots,x_n)=\F(\G(x_1,\ldots,x_n),\ldots,\G(x_1,\ldots,x_n)).
\end{equation}

This functional equation was implicitly considered in 1929 by Chisini~\cite[p.~108]{Chi29}, who investigated the concept of mean as an
\emph{average}\/ or a \emph{numerical equalizer}. More precisely, Chisini defined a mean of $n$ numbers $x_1,\ldots,x_n\in\I$ with respect to a
function $\F\colon\I^n\to\R$ as a number $M$ such that
$$
\F(x_1,\ldots,x_n)=\F(M,\ldots,M).
$$
For instance, when $\I=\left]0,\infty\right[$ and
$\F$ is the sum, the product, the sum of squares, the sum of inverses, or the sum of exponentials, the solution $M$ of the equation above is
unique and consists of the arithmetic mean, the geometric mean, the quadratic mean, the harmonic mean, and the exponential mean, respectively.

By considering the \emph{diagonal section}\/ of $\F$, i.e., the one-variable function $\delta_{\F}\colon\I\to\R$ defined by
$\delta_{\F}(x):=\F(x,\ldots,x)$, we can rewrite equation (\ref{eq:FuncEqFG}) as
\begin{equation}\label{eq:FuncEqFG2}
\F=\delta_{\F}\circ\G.
\end{equation}
If, as in the examples above, we assume that $\F$ is nondecreasing (in each variable) and that $\delta_{\F}$ is a bijection from $\I$ onto the
range of $\F$, then Chisini's equation (\ref{eq:FuncEqFG2}) clearly has a unique solution $\G=\delta_{\F}^{-1}\circ\F$ which is nondecreasing
and idempotent (i.e., such that $\delta_{\G}(x)=x$). Such a solution is then called a \emph{Chisini mean} or a \emph{level surface mean} (see
Bullen\ \cite[VI.4.1]{Bul03}).

In this paper, we consider Chisini's functional equation (\ref{eq:FuncEqFG2}) in its full generality, i.e., without any assumption on $\F$. We
first provide necessary and sufficient conditions on $\F$ for the existence of solutions and we show how the possible solutions can be
constructed (Section 2). We also investigate solutions of the form $g\circ\F$, where $g$ is a quasi-inverse of $\delta_{\F}$
(Section 3). We then elaborate on the case when $\F$ is nondecreasing and we show that if a solution exists then at least one nondecreasing and
idempotent solution always exists. We construct such a solution by means of a metric interpolation (inspired from Urysohn's lemma and Shepard's
interpolation method) and we discuss some of its properties (Section 4). We also show that this solution obtained by interpolation is continuous
whenever a continuous solution exists and we provide necessary and sufficient conditions for the existence of continuous solutions (Section 5).
Surprisingly enough, continuity of $\F$ is neither necessary nor sufficient to ensure the existence of continuous solutions. Finally, we discuss
a few applications of the theory developed here to certain classes of functions (Section 6). In particular, we revisit the concept of Chisini mean
and we extend it to the case when $\delta_{\F}$ is nondecreasing but not strictly increasing.

The terminology used throughout this paper is the following. For any integer $n\geqslant 1$, we set $[n]:=\{1,\ldots,n\}$. The domain and range
of any function $f$ are denoted by $\dom(f)$ and $\ran(f)$, respectively. The minimum and maximum functions are denoted by $\Min$ and $\Max$,
respectively. That is,
$$
\Min(\bfx):=\min\{x_1,\ldots,x_n\}\quad\mbox{and}\quad\Max(\bfx):=\max\{x_1,\ldots,x_n\}
$$
for any $\bfx\in\R^n$. The identity
function is the function $\id\colon\R\to\R$ defined by $\id(x)=x$. For any $i\in [n]$, $\mathbf{e}_i$ denotes the $i$th unit vector of $\R^n$.
We also set $\mathbf{1}:=(1,\ldots,1)\in\R^n$. The diagonal restriction of a subset $S\subseteq\I^n$ is the subset ${\rm diag}(S):=\{x\mathbf{1}
: x\in\I\}\cap S$. Finally, inequalities between vectors in $\R^n$, such as $\bfx\leqslant\bfx'$, are understood componentwise.

\section{Resolution of Chisini's equation}

In this section we provide necessary and sufficient conditions for the existence and uniqueness of solutions of Chisini's equation and we show
how the solutions can be constructed.

Let $\F\colon\I^n\to\R$ be a given function and suppose that the associated Chisini equation (\ref{eq:FuncEqFG2}) has a solution
$\G\colon\I^n\to\I$. We immediately see that, for any $\bfx\in\I^n$, the possible values of $\G(\bfx)$ are exactly those reals $z\in\I$ for
which the $n$-tuple $(z,\ldots,z)$ belongs to the level set of $\F$ through $\bfx$. In other terms, we must have
\begin{equation}\label{eq:GindFcF}
\G(\bfx)\in\delta_{\F}^{-1}\{\F(\bfx)\},\qquad\forall \bfx\in\I^n.
\end{equation}
Thus a necessary condition for equation (\ref{eq:FuncEqFG2}) to have at least one solution is
\begin{equation}\label{eq:ranFrandF}
\ran(\delta_{\F})=\ran(\F).
\end{equation}
This fact also follows from the following sequence of inclusions:
$\ran(\delta_{\F})\subseteq\ran(\F)=\ran(\delta_{\F}\circ\G)\subseteq\ran(\delta_{\F})$.

Assuming the Axiom of Choice (AC), we immediately see that condition (\ref{eq:ranFrandF}) is also sufficient for equation (\ref{eq:FuncEqFG2})
to have at least one solution. Indeed, by assuming both AC and (\ref{eq:ranFrandF}), we can define a function $\G\colon\I^n\to\I$ satisfying
(\ref{eq:GindFcF}) and this function then solves equation (\ref{eq:FuncEqFG2}). Note however that AC is not always required to ensure the
existence of a solution. For instance, if $\delta_{\F}$ is monotonic (i.e., either nondecreasing or nonincreasing), then every level set
$\delta_{\F}^{-1}\{\F(\bfx)\}$ is a bounded interval (except two of them at most) and for instance its midpoint could be chosen to define
$\G(\bfx)$.

Thus we have proved the following result.

\begin{proposition}\label{prop:FdFGrr}
Let $\F\colon\I^n\to\R$ be a function. If equation (\ref{eq:FuncEqFG2}) has at least one solution $\G\colon\I^n\to\I$ then
$\ran(\delta_{\F})=\ran(\F)$. Under AC (not necessary if $\delta_{\F}$ is monotonic), the converse also holds.
\end{proposition}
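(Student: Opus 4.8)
The plan is to prove the two implications separately, treating the converse direction as an explicit construction of a solution. For necessity, I would suppose $\G$ solves (\ref{eq:FuncEqFG2}), so that $\F=\delta_{\F}\circ\G$, and then establish the double inclusion already flagged in the surrounding text. Since $\delta_{\F}$ is the restriction of $\F$ to the diagonal $\{x\mathbf{1}:x\in\I\}$, we trivially have $\ran(\delta_{\F})\subseteq\ran(\F)$; and since $\F=\delta_{\F}\circ\G$, every value of $\F$ is already attained by $\delta_{\F}$, whence $\ran(\F)=\ran(\delta_{\F}\circ\G)\subseteq\ran(\delta_{\F})$. Combining the two inclusions yields (\ref{eq:ranFrandF}). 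This part is immediate and uses no choice.

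For sufficiency under AC, I would assume (\ref{eq:ranFrandF}) and fix an arbitrary $\bfx\in\I^n$. Then $\F(\bfx)\in\ran(\F)=\ran(\delta_{\F})$, so the level set $\delta_{\F}^{-1}\{\F(\bfx)\}$ is a nonempty subset of $\I$. Applying AC to the family $\{\delta_{\F}^{-1}\{\F(\bfx)\}:\bfx\in\I^n\}$ of nonempty sets, I would select one element $\G(\bfx)$ from each, thereby defining a function $\G\colon\I^n\to\I$ that satisfies (\ref{eq:GindFcF}) by construction. By the definition of the level set this means $\delta_{\F}(\G(\bfx))=\F(\bfx)$ for every $\bfx$, i.e.\ $\F=\delta_{\F}\circ\G$, so $\G$ is a solution.

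The only delicate point, which I expect to be the main (though minor) obstacle, is the assertion that AC can be avoided when $\delta_{\F}$ is monotonic. The idea is to replace the arbitrary selection by an explicit, canonical one. Assume for definiteness that $\delta_{\F}$ is nondecreasing. Then monotonicity forces each level set $\delta_{\F}^{-1}\{\F(\bfx)\}$ to be an interval (possibly a single point), and all but at most the two outermost such intervals are bounded. For every bounded level set I would define $\G(\bfx)$ by an explicit formula, for instance as its midpoint, and for the at most two unbounded ones I would fix a separate explicit rule (such as the finite endpoint). Because this prescription is given by a formula rather than by an unindexed selection, no appeal to AC is required, and the resulting $\G$ again satisfies (\ref{eq:GindFcF}) and hence solves (\ref{eq:FuncEqFG2}). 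The same argument applies, mutatis mutandis, when $\delta_{\F}$ is nonincreasing, which completes the plan.
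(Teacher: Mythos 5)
Your proposal is correct and follows essentially the same route as the paper: the same double inclusion $\ran(\delta_{\F})\subseteq\ran(\F)=\ran(\delta_{\F}\circ\G)\subseteq\ran(\delta_{\F})$ for necessity, the same pointwise choice from the nonempty level sets $\delta_{\F}^{-1}\{\F(\bfx)\}$ under AC, and the same midpoint device for the monotonic case. The only cosmetic caveat is that for an unbounded half-open level set the ``finite endpoint'' need not belong to the set, but since you offer it merely as one instance of an explicit selection rule (and such a rule clearly exists), this matches the paper's own level of detail.
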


The following example shows that condition (\ref{eq:ranFrandF}) does not hold for every function $\F$, even if $\F$ is nondecreasing.

\begin{example}\label{ex:NilMin}
The \emph{nilpotent minimum}\/ (see e.g.\ \cite{KleMes05}) is the function $\mathsf{T}^{\mathrm{nM}}\colon [0,1]^2\to [0,1]$ defined as
$$
\mathsf{T}^{\mathrm{nM}}(x_1,x_2):=
\begin{cases}
0,              & \mbox{if $x_1+x_2\leqslant 1$,}\\
\Min(x_1,x_2),  & \mbox{otherwise.}
\end{cases}
$$
We clearly have $\ran(\mathsf{T}^{\mathrm{nM}})=[0,1]$ and $\ran(\delta_{\mathsf{T}^{\mathrm{nM}}})=\{0\}\cup\,]\frac 12,1]$ (see
Figure~\ref{fig:NilMin}), and hence the associated Chisini equation has no solution.
\end{example}

\begin{figure}[htb]
\includegraphics[height=.47\linewidth, keepaspectratio=true]{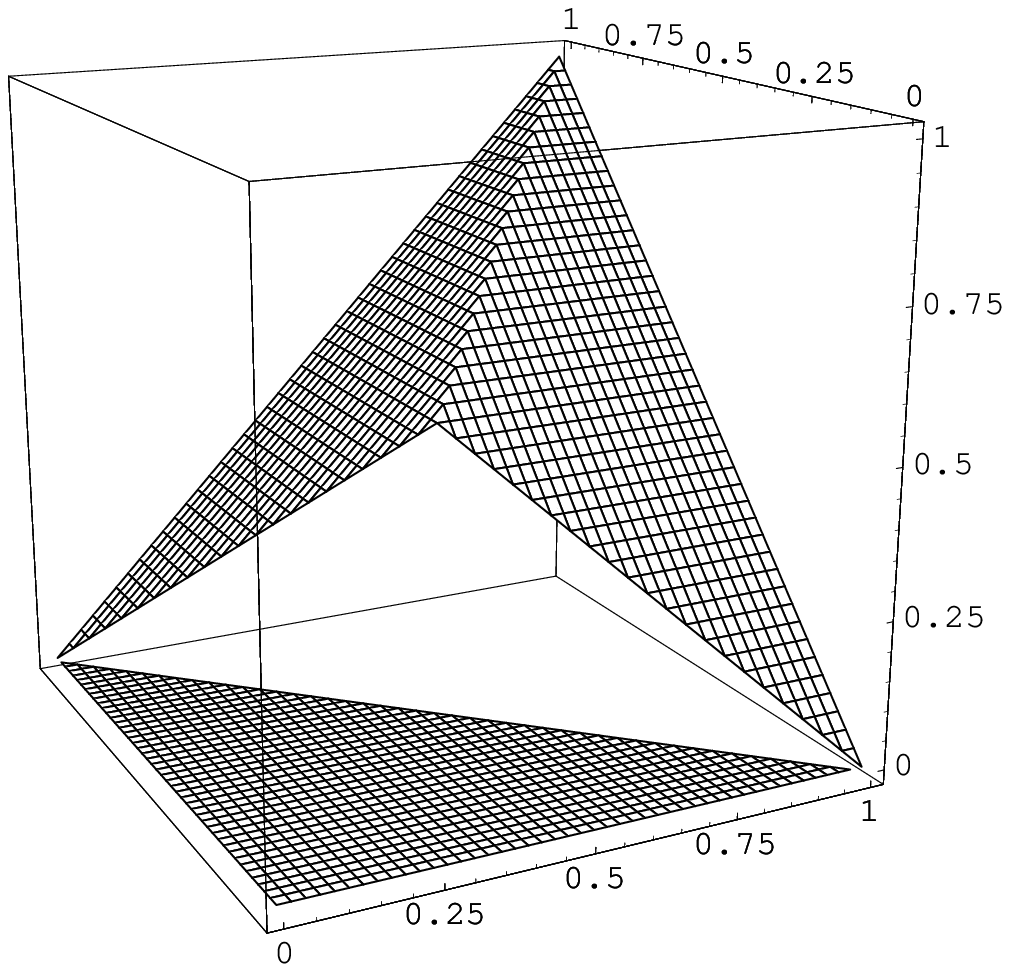}\hfill
\includegraphics[height=.45\linewidth, keepaspectratio=true]{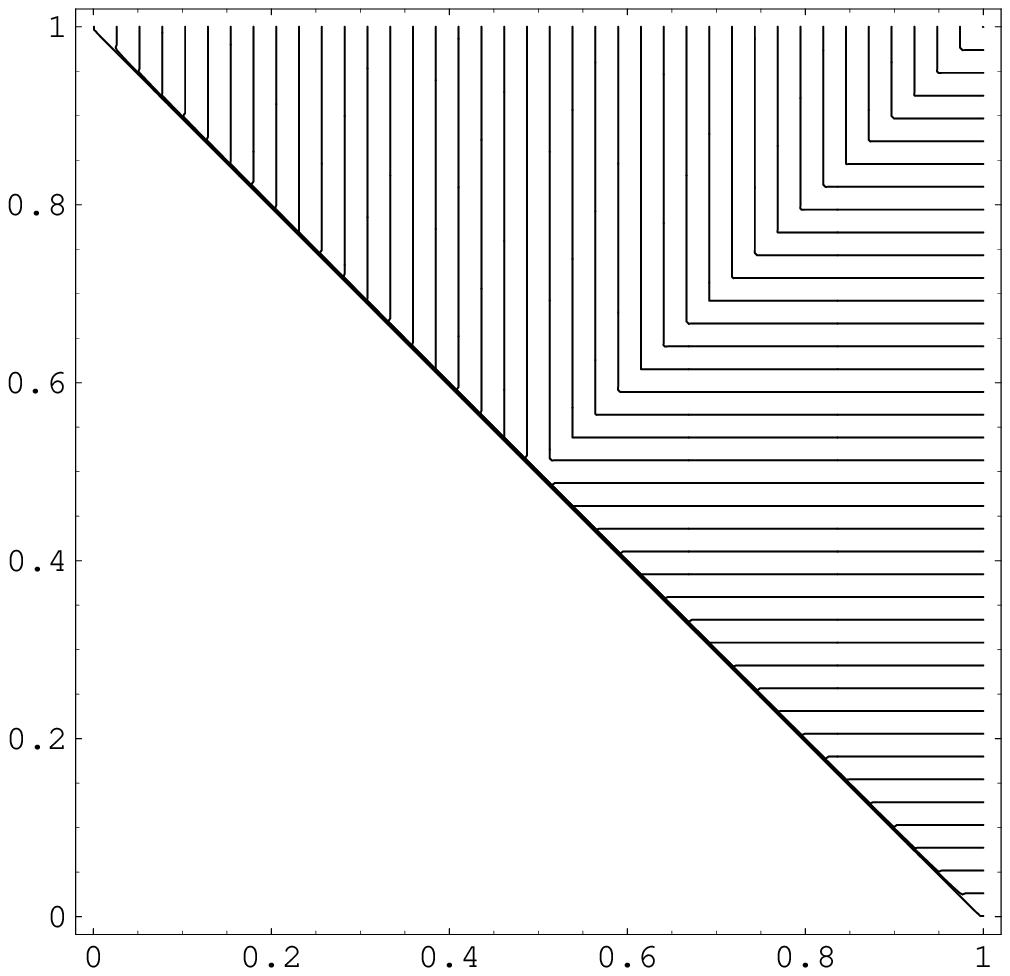}
\caption{Nilpotent minimum (3D plot and contour plot)} \label{fig:NilMin}
\end{figure}

Equation (\ref{eq:GindFcF}) shows how the possible solutions of the Chisini equation can be constructed. We first observe that $\I^n$ is a
disjoint union of the level sets of $\F$, i.e.,
$$
\I^n=\bigcup_{y\in\ran(\delta_{\F})}\F^{-1}\{y\}.
$$
Thus, constructing a solution $\G$
on $\I^n$ reduces to constructing it on each level set $\F^{-1}\{y\}$, with $y\in\ran(\delta_{\F})$. That is, for every $\bfx\in\F^{-1}\{y\}$,
we choose $\G(\bfx)\in\delta_{\F}^{-1}\{y\}$.

The next proposition yields an alternative description of the solutions of the Chisini equation through the concept of quasi-inverse function.
Recall first that a function $g$ is a {\em quasi-inverse}\/ of a function $f$ if
\begin{eqnarray}
&& f\circ g|_{\ran(f)}=\id|_{\ran(f)},\label{eq:defqi1}\\
&& \ran(g|_{\ran(f)})=\ran(g).\label{eq:defqi2}
\end{eqnarray}

For any function $f$, denote by $Q(f)$ the set of its quasi-inverses. This set is nonempty whenever we assume AC, which is actually just another
form of the statement ``every function has a quasi-inverse.'' Recall also that the relation of being quasi-inverse is symmetric, i.e., if $g\in
Q(f)$ then $f\in Q(g)$; moreover $\ran(f)\subseteq\dom(g)$ and $\ran(g)\subseteq\dom(f)$ (see \cite[Sect.~2.1]{SchSkl83}).

By definition, if $g\in Q(f)$ then $g|_{\ran(f)}\in Q(f)$. Thus we can always restrict the domain of any quasi-inverse $g\in Q(f)$ to $\ran(f)$.
These ``restricted'' quasi-inverses, also called \emph{right-inverses}, are then simply characterized by condition (\ref{eq:defqi1}), which can
be rewritten as
\begin{equation}\label{eq:defqi11}
g(y)\in f^{-1}\{y\},\qquad \forall y\in\ran(f).
\end{equation}

\begin{proposition}\label{prop:SolEqFDG}
Let $\F\colon\I^n\to\R$ be a function satisfying (\ref{eq:ranFrandF}) and let $\G\colon\I^n\to\I$ be any function. Then, assuming AC (not
necessary if $\delta_{\F}$ is monotonic), the following assertions are equivalent:
\begin{enumerate}
\item[(i)] We have $\F=\delta_{\F}\circ\G$.

\item[(ii)] For any $\bfx\in\I^n$, we have $\G(\bfx)\in\delta_{\F}^{-1}\{\F(\bfx)\}$.

\item[(iii)] For any $\bfx\in\I^n$, there is $g_{\bfx}\in Q(\delta_{\F})$ such that $\G(\bfx)=(g_{\bfx}\circ\F)(\bfx)$.
\end{enumerate}
\end{proposition}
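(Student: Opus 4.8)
The plan is to prove the chain of equivalences $(i)\Leftrightarrow(ii)$ and $(ii)\Leftrightarrow(iii)$, reusing the observations already made in the text. The first equivalence is essentially immediate: the discussion preceding equation~(\ref{eq:GindFcF}) shows that $\G$ solves $\F=\delta_{\F}\circ\G$ if and only if $\G(\bfx)$ lies in the level set $\delta_{\F}^{-1}\{\F(\bfx)\}$ for every $\bfx$, so I would simply restate that argument. Concretely, $(\delta_{\F}\circ\G)(\bfx)=\F(\bfx)$ means $\delta_{\F}(\G(\bfx))=\F(\bfx)$, which is exactly the membership $\G(\bfx)\in\delta_{\F}^{-1}\{\F(\bfx)\}$; this holds for all $\bfx$ iff $(i)$ holds. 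Note that this part uses neither AC nor (\ref{eq:ranFrandF}).

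For $(ii)\Rightarrow(iii)$, the idea is to build, for each fixed $\bfx$, a quasi-inverse $g_{\bfx}$ of $\delta_{\F}$ that sends $\F(\bfx)$ to the prescribed value $\G(\bfx)$. Using (\ref{eq:ranFrandF}) we have $\F(\bfx)\in\ran(\F)=\ran(\delta_{\F})$, so a right-inverse of $\delta_{\F}$ must assign to $\F(\bfx)$ some element of $\delta_{\F}^{-1}\{\F(\bfx)\}$; condition $(ii)$ guarantees $\G(\bfx)$ is a legitimate such choice. I would first define $g_{\bfx}$ on $\ran(\delta_{\F})$ by setting $g_{\bfx}(\F(\bfx)):=\G(\bfx)$ and $g_{\bfx}(y):=$ any element of $\delta_{\F}^{-1}\{y\}$ for the remaining $y\in\ran(\delta_{\F})$ (invoking AC to make these choices, or taking midpoints of the level intervals when $\delta_{\F}$ is monotonic, as in the text). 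By the characterization (\ref{eq:defqi11}) this restricted $g_{\bfx}$ is a right-inverse, hence a quasi-inverse, of $\delta_{\F}$, and by construction $(g_{\bfx}\circ\F)(\bfx)=g_{\bfx}(\F(\bfx))=\G(\bfx)$.

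The converse $(iii)\Rightarrow(ii)$ is a direct verification requiring no choice axiom: if $g_{\bfx}\in Q(\delta_{\F})$ and $\G(\bfx)=(g_{\bfx}\circ\F)(\bfx)=g_{\bfx}(\F(\bfx))$, then since $\F(\bfx)\in\ran(\delta_{\F})$ the defining property (\ref{eq:defqi11}) of the right-inverse $g_{\bfx}|_{\ran(\delta_{\F})}$ gives $g_{\bfx}(\F(\bfx))\in\delta_{\F}^{-1}\{\F(\bfx)\}$, which is exactly $(ii)$. The main subtlety to handle carefully is the role of AC: it enters only in the forward direction $(ii)\Rightarrow(iii)$ (and, as already noted, is dispensable when $\delta_{\F}$ is monotonic because the level sets are intervals with canonically choosable midpoints), whereas the equivalence $(i)\Leftrightarrow(ii)$ and the implication $(iii)\Rightarrow(ii)$ are unconditional. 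I would therefore make explicit at each step whether AC is invoked, so the parenthetical qualification in the statement is justified precisely.
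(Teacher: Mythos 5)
Your proposal is correct and takes essentially the same approach as the paper: the easy implications are direct verifications, and the one substantive step is constructing, for each $\bfx$, a quasi-inverse $g_{\bfx}\in Q(\delta_{\F})$ realizing the value $\G(\bfx)$ via the right-inverse characterization (\ref{eq:defqi11}) together with AC. The only differences are cosmetic: you organize the cycle as $(i)\Leftrightarrow(ii)$ and $(ii)\Leftrightarrow(iii)$ rather than $(iii)\Rightarrow(ii)\Rightarrow(i)$ plus $(i)\Rightarrow(iii)$, and you spell out explicitly the construction and the precise points where AC is (and is not) invoked, which the paper leaves implicit.
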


\begin{proof}
The implications $(iii)\Rightarrow (ii)\Rightarrow (i)$ are immediate. Let us prove that $(i)\Rightarrow (iii)$. Fix $\bfx\in\I^n$ and set
$y=\F(\bfx)$. We have $\G(\bfx)\in\delta_{\F}^{-1}\{y\}$ and, by (\ref{eq:defqi11}), there exists $g_{\bfx}\in Q(\delta_{\F})$ such that
$\G(\bfx)=g_{\bfx}(y)=(g_{\bfx}\circ\F)(\bfx)$.
\end{proof}

A necessary and sufficient condition for equation (\ref{eq:FuncEqFG2}) to have a unique solution immediately follows from the assertion (ii) of
Proposition~\ref{prop:SolEqFDG}.

\begin{corollary}\label{cor:uniqueness}
Let $\F\colon\I^n\to\R$ be a function satisfying (\ref{eq:ranFrandF}). Then, assuming AC (not necessary if $\delta_{\F}$ is monotonic), the
associated Chisini equation (\ref{eq:FuncEqFG2}) has a unique solution if and only if $\delta_{\F}$ is one-to-one. The solution is then given by
$\G=\delta_{\F}^{-1}\circ\F$.
\end{corollary}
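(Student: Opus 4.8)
The plan is to derive the uniqueness criterion directly from the characterization of solutions already established in assertion (ii) of Proposition~\ref{prop:SolEqFDG}. Since the proposition tells us (under AC, or unconditionally when $\delta_{\F}$ is monotonic) that a function $\G\colon\I^n\to\I$ solves the Chisini equation if and only if $\G(\bfx)\in\delta_{\F}^{-1}\{\F(\bfx)\}$ for every $\bfx\in\I^n$, the question of uniqueness reduces entirely to understanding when these prescribed sets $\delta_{\F}^{-1}\{\F(\bfx)\}$ are singletons for all $\bfx$.

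\begin{proof}
By Proposition~\ref{prop:SolEqFDG}, a function $\G\colon\I^n\to\I$ solves (\ref{eq:FuncEqFG2}) if and only if $\G(\bfx)\in\delta_{\F}^{-1}\{\F(\bfx)\}$ for every $\bfx\in\I^n$. Therefore the solution is unique precisely when, for each $\bfx\in\I^n$, the set $\delta_{\F}^{-1}\{\F(\bfx)\}$ is a singleton. Since $\F$ satisfies (\ref{eq:ranFrandF}), i.e.\ $\ran(\delta_{\F})=\ran(\F)$, as $\bfx$ ranges over $\I^n$ the value $y=\F(\bfx)$ ranges over all of $\ran(\delta_{\F})$. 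Hence the solution is unique if and only if $\delta_{\F}^{-1}\{y\}$ is a singleton for every $y\in\ran(\delta_{\F})$, which is exactly the statement that $\delta_{\F}$ is one-to-one. In that case the unique element of $\delta_{\F}^{-1}\{\F(\bfx)\}$ is $\delta_{\F}^{-1}(\F(\bfx))$, so $\G=\delta_{\F}^{-1}\circ\F$.
\end{proof}

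The two directions are short but should be stated carefully. For the ``if'' direction, injectivity of $\delta_{\F}$ forces every fiber $\delta_{\F}^{-1}\{y\}$ to have at most one point, and condition (\ref{eq:ranFrandF}) guarantees each relevant fiber is nonempty, so each is exactly a singleton and $\G$ is forced to take the single admissible value. For the ``only if'' direction, the key observation is that (\ref{eq:ranFrandF}) is what lets us pass from ``every value $\F(\bfx)$ has a singleton fiber'' to ``every value in $\ran(\delta_{\F})$ has a singleton fiber''; without surjectivity of $\F$ onto $\ran(\delta_{\F})$ one could not conclude injectivity of $\delta_{\F}$ on its whole range. The only real subtlety—and the step I would flag as needing care rather than being a genuine obstacle—is ensuring that the reduction to singleton fibers is legitimate, which requires having a solution in the first place; this is precisely where the existence direction of Proposition~\ref{prop:FdFGrr} and the AC caveat enter, so the hypotheses should be carried through verbatim.
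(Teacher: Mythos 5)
Your proof is correct and follows exactly the route the paper intends: the paper gives no separate proof of this corollary, stating only that it ``immediately follows from assertion (ii) of Proposition~\ref{prop:SolEqFDG},'' and your argument is precisely the fleshed-out version of that observation, including the correct use of condition (\ref{eq:ranFrandF}) to pass from singleton fibers over $\ran(\F)$ to injectivity of $\delta_{\F}$ on all of $\I$.
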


The special case when $\F$ is a symmetric function of its variables is of particular interest. For instance, it is then easy to see that there
are always symmetric solutions of the Chisini equation. We now state a slightly more general (but immediate) result.

Let $\mathfrak{S}_n$ be the set of permutations on $[n]$ and let $\F\colon\I^n\to\R$ be any function. We say that $\sigma\in\mathfrak{S}_n$ is a
\emph{symmetry} of $\F$ if $\F([\bfx]_{\sigma})=\F(\bfx)$ for every $\bfx\in\I^n$, where $[\bfx]_{\sigma}$ denotes the $n$-tuple
$(x_{\sigma(1)},\ldots,x_{\sigma(n)})$.

\begin{proposition}
Let $\F\colon\I^n\to\R$ be a function satisfying (\ref{eq:ranFrandF}) and let $\sigma\in\mathfrak{S}_n$ be a symmetry of $\F$. If
$\G\colon\I^n\to\I$ is a solution of Chisini's equation (\ref{eq:FuncEqFG2}), then the function $\G_{\sigma}\colon\I^n\to\I$, defined by
$\G_{\sigma}(\bfx)=\G([\bfx]_{\sigma})$, is also a solution of (\ref{eq:FuncEqFG2}).
\end{proposition}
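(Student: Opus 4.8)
The plan is to verify the defining identity (\ref{eq:FuncEqFG2}) for $\G_{\sigma}$ directly and pointwise, chaining together the hypothesis that $\G$ is a solution with the hypothesis that $\sigma$ is a symmetry of $\F$. First I would observe that $\G_{\sigma}$ is a well-defined map from $\I^n$ to $\I$, since $[\bfx]_{\sigma}\in\I^n$ whenever $\bfx\in\I^n$ and $\G$ already takes its values in $\I$; thus only the functional equation itself remains to be checked.

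Fixing an arbitrary $\bfx\in\I^n$, I would compute $\delta_{\F}(\G_{\sigma}(\bfx))$ by unfolding the definition of $\G_{\sigma}$ and applying the solution property of $\G$ at the permuted point $[\bfx]_{\sigma}$:
$$
\delta_{\F}(\G_{\sigma}(\bfx))=\delta_{\F}(\G([\bfx]_{\sigma}))=\F([\bfx]_{\sigma}),
$$
where the last equality is equation (\ref{eq:FuncEqFG2}) for $\G$ evaluated at $[\bfx]_{\sigma}$. The final step is to invoke the symmetry of $\F$, which gives $\F([\bfx]_{\sigma})=\F(\bfx)$, so that $\delta_{\F}(\G_{\sigma}(\bfx))=\F(\bfx)$. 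Since $\bfx$ was arbitrary, this is precisely $\F=\delta_{\F}\circ\G_{\sigma}$, as required.

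Alternatively, and even more briefly, one can argue through assertion (ii) of Proposition~\ref{prop:SolEqFDG}: since $\G$ solves the equation we have $\G([\bfx]_{\sigma})\in\delta_{\F}^{-1}\{\F([\bfx]_{\sigma})\}$, and the symmetry hypothesis identifies this level set with $\delta_{\F}^{-1}\{\F(\bfx)\}$; hence $\G_{\sigma}(\bfx)\in\delta_{\F}^{-1}\{\F(\bfx)\}$ for every $\bfx$, which is exactly condition (ii) for $\G_{\sigma}$. I expect no genuine obstacle here, as the argument is a routine substitution. The only point deserving a moment's attention is the order in which the symmetry relation is applied: one uses it for the argument $[\bfx]_{\sigma}$ rather than for $\bfx$ itself, but since $\sigma$ is a symmetry for \emph{every} input this causes no difficulty.
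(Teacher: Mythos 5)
Your proof is correct and matches the paper's intent: the paper states this proposition as ``immediate'' and omits the proof entirely, and your direct pointwise computation $\delta_{\F}(\G_{\sigma}(\bfx))=\delta_{\F}(\G([\bfx]_{\sigma}))=\F([\bfx]_{\sigma})=\F(\bfx)$ is exactly the routine substitution the author had in mind.
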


\section{Quasi-inverse based solutions}
\label{sec:TrivialS}

In this section, we investigate special solutions whose construction is inspired from Proposition~\ref{prop:SolEqFDG} (iii). These solutions are
described in the following immediate result.

\begin{proposition}\label{prop:FdFGrr2}
Let $\F\colon\I^n\to\R$ be a function satisfying condition (\ref{eq:ranFrandF}). Then, assuming AC (not necessary if $\delta_{\F}$ is
monotonic), for any $g\in Q(\delta_{\F})$, the function $\G=g\circ\F$, from $\I^n$ to $\I$, is well defined and solves Chisini's equation
(\ref{eq:FuncEqFG2}).
\end{proposition}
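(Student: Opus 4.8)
The plan is to verify the two requirements that make $\G = g \circ \F$ a well-defined solution, both of which follow almost directly from the structure already assembled in the excerpt. First I would observe that condition (\ref{eq:ranFrandF}) tells us $\ran(\F) = \ran(\delta_{\F})$, and that $g$ is a quasi-inverse of $\delta_{\F}$. The starting point is therefore to check that $g \circ \F$ is actually defined on all of $\I^n$ with values in $\I$: for this I need $\ran(\F) \subseteq \dom(g)$, so that $g$ can be applied to every value $\F(\bfx)$.

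To establish well-definedness I would invoke the basic facts about quasi-inverses recalled before Proposition~\ref{prop:SolEqFDG}: since $g \in Q(\delta_{\F})$, we have $\ran(\delta_{\F}) \subseteq \dom(g)$, and by (\ref{eq:ranFrandF}) this means $\ran(\F) \subseteq \dom(g)$. Hence for every $\bfx \in \I^n$ the value $(g \circ \F)(\bfx) = g(\F(\bfx))$ is defined. Moreover $\ran(g) \subseteq \dom(\delta_{\F}) = \I$, so $\G$ indeed maps $\I^n$ into $\I$. The role of AC (when $\delta_{\F}$ is not monotonic) enters only in guaranteeing that $Q(\delta_{\F})$ is nonempty, as already noted in the text; once a particular $g$ is given, no further choice is needed.

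The heart of the argument is then to verify that $\G = g \circ \F$ solves $\F = \delta_{\F} \circ \G$, i.e.\ that $\delta_{\F} \circ g \circ \F = \F$. Here I would use the defining property (\ref{eq:defqi1}) of the quasi-inverse, namely $\delta_{\F} \circ g|_{\ran(\delta_{\F})} = \id|_{\ran(\delta_{\F})}$. For any $\bfx \in \I^n$, the value $\F(\bfx)$ lies in $\ran(\F) = \ran(\delta_{\F})$, so applying (\ref{eq:defqi1}) at the point $\F(\bfx)$ gives $\delta_{\F}(g(\F(\bfx))) = \F(\bfx)$. Since this holds for every $\bfx$, we conclude $\delta_{\F} \circ \G = \F$, as required.

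I do not expect any genuine obstacle here: the proposition is flagged as ``immediate,'' and the entire content is the bookkeeping of matching domains and ranges so that the composition $\delta_{\F} \circ g \circ \F$ is both defined and equal to $\F$. The only subtlety worth stating explicitly is the use of (\ref{eq:ranFrandF}) to bridge $\ran(\F)$ and $\ran(\delta_{\F})$, since (\ref{eq:defqi1}) is a statement about $\ran(\delta_{\F})$ whereas the input to $g$ is a value of $\F$; without the hypothesis $\ran(\delta_{\F}) = \ran(\F)$ one could not guarantee that $\F(\bfx)$ lands in the set on which the quasi-inverse identity is valid.
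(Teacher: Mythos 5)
Your proof is correct and matches the paper's intent exactly: the paper states this proposition as ``immediate'' with no written proof, and the argument it presupposes is precisely your bookkeeping --- $\ran(\F)=\ran(\delta_{\F})\subseteq\dom(g)$ and $\ran(g)\subseteq\dom(\delta_{\F})=\I$ for well-definedness, then (\ref{eq:defqi1}) applied at $\F(\bfx)\in\ran(\delta_{\F})$ to get $\delta_{\F}\circ g\circ\F=\F$. Your remark that AC enters only to guarantee $Q(\delta_{\F})\neq\varnothing$ is also the paper's reading.
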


Proposition~\ref{prop:FdFGrr2} motivates the following definition. Given a function $\F\colon\I^n\to\R$ satisfying condition
(\ref{eq:ranFrandF}), we say that a function $\G\colon\I^n\to\I$ is a \emph{quasi-inverse based solution} (or \emph{$Q$-solution}) of Chisini's
equation (\ref{eq:FuncEqFG2}) if there exists $g\in Q(\delta_{\F})$ such that $\G=g\circ\F$.

Recall that a function $\F\colon\I^n\to\R$ is said to be \emph{idempotent}\/ if $\delta_{\F}=\id$. We say that $\F$ is \emph{range-idempotent}\/
if $\ran(\F)\subseteq \I$ and $\delta_{\F}|_{\ran(\F)}=\id|_{\ran(\F)}$, where the latter condition can be rewritten as $\delta_{\F}\circ\F=\F$.
We can readily see that any $Q$-solution $\G\colon\I^n\to\I$ of Chisini's equation (\ref{eq:FuncEqFG2}) is range-idempotent. Indeed, since
$\G=g\circ\F$ for some $g\in Q(\delta_{\F})$, we simply have $\delta_{\G}\circ\G=g\circ\delta_{\F}\circ\G=g\circ\F=\G$.

An interesting feature of $Q$-solutions $\G$ is that, in addition of being range-idempotent, they may inherit certain properties from $\F$, such as
nondecreasing monotonicity, symmetry, continuity, etc. For instance, $\sigma\in\mathfrak{S}_n$ is a symmetry of $\G$ if and only if it is a
symmetry of $\F$. Also, $\G$ is nondecreasing as soon as $\F$ is either nondecreasing or nonincreasing. The latter result follows from the
fact that if a function $f\colon\I\to\R$ is nondecreasing (resp.\ nonincreasing) then so is every $g\in Q(f)$; see \cite[Sect.~4.4]{SchSkl83}.
However, as the following example shows, Chisini's equation may have non-$Q$-solutions and the $Q$-solutions may be non-idempotent.

\begin{example}\label{ex:Luka}
The \emph{{\L}ukasiewicz t-norm}\/ (see e.g.\ \cite{KleMes05}) is the function $\mathsf{T}^{\mathrm{L}}\colon [0,1]^2\to [0,1]$ defined as
$$
\mathsf{T}^{\mathrm{L}}(x_1,x_2):=\Max(0,x_1+x_2-1).
$$
We have $\delta_{\mathsf{T}^{\mathrm{L}}}(x)=\Max(0,2x-1)$ and any $g\in
Q(\delta_{\mathsf{T}^{\mathrm{L}}})$ is such that $g(x)=\frac 12(x+1)$ on $]0,1]$ and $g(0)\in [0,\frac 12]$. Thus, no function of the form
$g\circ\mathsf{T}^{\mathrm{L}}$ is idempotent on $[0,1]^2$. However, the idempotent function $\G(x_1,x_2)=\frac 12(x_1+x_2)$ clearly solves the
Chisini equation $\mathsf{T}^{\mathrm{L}}=\delta_{\mathsf{T}^{\mathrm{L}}}\circ\G$.
\end{example}

The $Q$-solutions of Chisini's equation can be easily transformed into idempotent solutions. Indeed, for any $g\in Q(\delta_{\F})$, the function
$\G\colon\I^n\to\I$, defined by
$$
\G(\bfx)=
\begin{cases}
x_1, & \mbox{if $\bfx\in\mathrm{diag}(\I^n)$},\\
(g\circ\F)(\bfx), & \mbox{otherwise},
\end{cases}
$$
is an idempotent solution. However, for such solutions, some properties of the $Q$-solutions, such as nondecreasing monotonicity, might be lost.

This motivates the natural question whether the Chisini equation, when solvable, has nondecreasing and idempotent solutions. In the next
section, we show in a constructive way that, if $\F$ is nondecreasing and satisfies condition (\ref{eq:ranFrandF}), at least one such solution
always exists.

\section{Nondecreasing and idempotent solutions}

We now examine the situation when $\F$ is nondecreasing, in which case condition (\ref{eq:ranFrandF}) alone ensures the solvability of Chisini's
equation. Clearly $\delta_{\F}$ is then nondecreasing and hence its level sets $\delta_{\F}^{-1}\{y\}$, $y\in\ran(\delta_{\F})$, are intervals.
It follows that $\delta_{\F}$ always has a nondecreasing quasi-inverse $g\in Q(\delta_{\F})$ (without an appeal to AC) and hence the
$Q$-solution $\G=g\circ\F$ is also nondecreasing and even range-idempotent (see Section~\ref{sec:TrivialS}). However, as we observed in
Example~\ref{ex:Luka}, this solution need not be idempotent.

In this section we show that, assuming condition (\ref{eq:ranFrandF}), at least one nondecreasing and idempotent solution always exists and we
show how to construct such a solution (see Theorem~\ref{thm:GF-ND-Id}). Roughly speaking, the idea consists in constructing on each level set
$\F^{-1}\{y\}$, for $y\in\ran(\delta_{\F})$, a nondecreasing and idempotent function that assumes the value $\inf\delta_{\F}^{-1}\{y\}$ (resp.\
$\sup\delta_{\F}^{-1}\{y\}$) on the common edge of the level set $\F^{-1}\{y\}$ and the adjacent lower (resp.\ upper) level set. As we will
discuss in Remark~\ref{rem:GF-ND-Id} $(ii)$-$(iii)$, this construction actually consists of a metric interpolation based on both Urysohn's lemma
and Shepard's interpolation method.

Let $\F\colon\I^n\to\R$ be a nondecreasing function satisfying (\ref{eq:ranFrandF}). For any $y\in\ran(\delta_{\F})$, consider the corresponding
lower and upper level sets of $\F$, defined by
$$
\F_{<}^{-1}(y) := \{\bfx\in\I^n : \F(\bfx)<y\}\qquad\mbox{and}\qquad \F_{>}^{-1}(y) := \{\bfx\in\I^n : \F(\bfx)>y\},
$$
respectively. Consider the Chebyshev distance between two points $\bfx,\bfx'\in\R^n$ and between a point $\bfx\in\R^n$ and a subset
$S\subseteq\R^n$,
\begin{eqnarray*}
d_{\infty}(\bfx,\bfx') &:=& \|\bfx-\bfx'\|_{\infty} ~=~ \max_{i\in [n]} |x_i-x'_i|,\\
d_{\infty}(\bfx,S) &:=& \inf_{\bfx'\in S} \|\bfx-\bfx'\|_{\infty},
\end{eqnarray*}
with the convention that $d_{\infty}(\bfx,\varnothing)=\infty$. Define also the following functions, from $\I^n$ to $[-\infty,\infty]$,
$$
a_{\F}(\bfx) := \inf\delta_{\F}^{-1}\{\F(\bfx)\},\qquad b_{\F}(\bfx) := \sup\delta_{\F}^{-1}\{\F(\bfx)\},
$$
and
\begin{eqnarray*}
d_{\F}^<(\bfx) &:=& d_{\infty}\big(\bfx,\F_{<}^{-1}(\F(\bfx))\big) ~=~ \inf_{\textstyle{\bfx'\in\I^n\atop \F(\bfx')<\F(\bfx)}} \|\bfx-\bfx'\|_{\infty},\\
d_{\F}^>(\bfx) &:=& d_{\infty}\big(\bfx,\F_{>}^{-1}(\F(\bfx))\big) ~=~ \inf_{\textstyle{\bfx'\in\I^n\atop \F(\bfx')>\F(\bfx)}}
\|\bfx-\bfx'\|_{\infty}.
\end{eqnarray*}

The next lemma concerns the case when $d_{\F}^<(\bfx)=\infty$ (resp.\ $d_{\F}^>(\bfx)=\infty$), which means that
$\F_{<}^{-1}(\F(\bfx))=\varnothing$ (resp.\ $\F_{>}^{-1}(\F(\bfx))=\varnothing$).

\begin{lemma}\label{lemma:dab}
Let $\F\colon\I^n\to\R$ be a nondecreasing function satisfying (\ref{eq:ranFrandF}) and let $\bfx\in\I^n$. If $d_{\F}^<(\bfx)=\infty$ (resp.\
$d_{\F}^>(\bfx)=\infty$) then $a_{\F}(\bfx)=\inf\I$ (resp.\ $b_{\F}(\bfx)=\sup\I$). The converse holds if $\inf\I\notin\I$ (resp.\
$\sup\I\notin\I$).
\end{lemma}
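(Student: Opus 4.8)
The plan is to prove both implications by unwinding the definitions of $d_{\F}^<$ and $a_{\F}$ and exploiting the fact that $\F$ is nondecreasing.

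For the forward direction, suppose $d_{\F}^<(\bfx)=\infty$, i.e.\ $\F_{<}^{-1}(\F(\bfx))=\varnothing$. This means there is no $\bfx'\in\I^n$ with $\F(\bfx')<\F(\bfx)$; in particular no diagonal point lies strictly below in value. First I would set $y=\F(\bfx)$ and recall $a_{\F}(\bfx)=\inf\delta_{\F}^{-1}\{y\}$. The key observation is that for any $z\in\I$ with $z<\inf\delta_{\F}^{-1}\{y\}$, the diagonal point $z\mathbf{1}$ cannot satisfy $\delta_{\F}(z)=y$; since $\F$ (hence $\delta_{\F}$) is nondecreasing and the level set $\delta_{\F}^{-1}\{y\}$ is an interval, such a $z$ must give $\delta_{\F}(z)\leqslant y$. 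If $\delta_{\F}(z)<y$ then $z\mathbf{1}\in\F_{<}^{-1}(y)$, contradicting emptiness. So either no such $z$ exists in $\I$ (forcing $a_{\F}(\bfx)=\inf\I$), or every $z<a_{\F}(\bfx)$ gives $\delta_{\F}(z)=y$, which contradicts $z<\inf\delta_{\F}^{-1}\{y\}$. Either way I conclude $a_{\F}(\bfx)=\inf\I$. The symmetric argument (using $\F_{>}^{-1}$, the supremum, and $\sup\I$) handles the parenthetical claim.

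For the converse under the hypothesis $\inf\I\notin\I$, suppose $a_{\F}(\bfx)=\inf\I$. Argue by contraposition: if $d_{\F}^<(\bfx)<\infty$ then $\F_{<}^{-1}(\F(\bfx))\neq\varnothing$, so there is some $\bfx'$ with $\F(\bfx')<y$. Then $\Min(\bfx')\mathbf{1}\leqslant\bfx'$ componentwise, and by monotonicity $\delta_{\F}(\Min(\bfx'))=\F(\Min(\bfx')\mathbf{1})\leqslant\F(\bfx')<y$. Hence there is a diagonal value strictly below $y$, witnessed by $w:=\Min(\bfx')\in\I$ with $\delta_{\F}(w)<y$. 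Since $\delta_{\F}$ is nondecreasing and $w\in\I$, the whole level interval $\delta_{\F}^{-1}\{y\}$ lies strictly above $w$, so $a_{\F}(\bfx)=\inf\delta_{\F}^{-1}\{y\}\geqslant w>\inf\I$ once I use $w\in\I$ together with $\inf\I\notin\I$ to get the strict inequality $w>\inf\I$. This contradicts $a_{\F}(\bfx)=\inf\I$, establishing the converse.

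The main obstacle I anticipate is the careful bookkeeping at the boundary of $\I$: the role of the hypothesis $\inf\I\notin\I$ is precisely to guarantee that any actual element $w\in\I$ satisfies $w>\inf\I$ strictly, which is what converts the existence of a strictly-lower diagonal value into $a_{\F}(\bfx)>\inf\I$. Without that hypothesis the converse can fail, since $\inf\I$ itself might be an attained value with $\delta_{\F}(\inf\I)<y$, leaving $a_{\F}(\bfx)=\inf\I$ despite $\F_{<}^{-1}(\F(\bfx))$ being nonempty. I would also need to be slightly careful that $\Min(\bfx')\in\I$ (true since $\I$ is an interval containing all coordinates of $\bfx'$) so that reducing an arbitrary witness $\bfx'$ to a diagonal witness is legitimate.
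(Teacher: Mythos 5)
Your proof is correct and follows essentially the same route as the paper's: the forward direction derives a contradiction from a diagonal point $z\mathbf{1}$ with $z<a_{\F}(\bfx)$, and the converse reduces a witness $\bfx'$ with $\F(\bfx')<\F(\bfx)$ to the diagonal point $\Min(\bfx')\mathbf{1}$ and uses $\inf\I\notin\I$ to force $\Min(\bfx')\notin\I$. The only cosmetic difference is that you phrase the converse as a contraposition where the paper argues by contradiction.
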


\begin{proof}
We prove the lower bound statement only; the other one can be established dually. Let $\bfx\in\I^n$ and assume that $d_{\F}^<(\bfx)=\infty$,
which means that $\F(\bfx)\leqslant\F(\bfx')$ for all $\bfx'\in\I^n$. Then the result immediately follows for if there were $x\in\I$ such that
$x<a_{\F}(\bfx)$ then we would obtain $\delta_{\F}(x)<\F(\bfx)$, a contradiction. To prove the converse claim, assume that
$a_{\F}(\bfx)=\inf\I\notin\I$ and suppose that there is $\bfx'\in\I^n$ such that $\F(\bfx')<\F(\bfx)$. By nondecreasing monotonicity, we have
$$
(\delta_{\F}\circ\Min)(\bfx')\leqslant\F(\bfx')<\F(\bfx).
$$
But then we must have $\Min(\bfx')=\inf\I$ and hence $\bfx'\notin\I^n$, a
contradiction.
\end{proof}

\begin{remark}
In the second part of Lemma~\ref{lemma:dab}, the condition $\inf\I\notin\I$ (resp.\ $\sup\I\notin\I$) cannot be dropped off. Indeed, let e.g.\
$\F\colon [a,b]^n\to\R$ be defined by $\F(a\mathbf{1})=0$ and $\F(\bfx)=1$ if $\bfx\neq a\mathbf{1}$. Then, for any $\bfx\neq a\mathbf{1}$, we
have $a_{\F}(\bfx)=a$ but $d_{\F}^<(\bfx)<\infty$.
\end{remark}

Now, consider the following subdomain of $\I^n$:
$$
\Omega_{\F} := \{\bfx\in\I^n : d_{\F}^>(\bfx)+d_{\F}^<(\bfx)>0\}
$$
and define the function $\U_{\F}\colon\Omega_{\F}\to\R$ by
$$
\U_{\F}(\bfx) := \frac{d_{\F}^>(\bfx)\, a_{\F}^{\mathstrut}(\bfx)+d_{\F}^<(\bfx)\, b_{\F}^{\mathstrut}(\bfx)}{d_{\F}^>(\bfx)+d_{\F}^<(\bfx)}.
$$
By Lemma~\ref{lemma:dab}, we immediately observe that this function is well defined if and only if both $d_{\F}^<(\bfx)$ and $d_{\F}^>(\bfx)$
are bounded. By extension, when only $d_{\F}^>(\bfx)$ is bounded, we naturally consider the limiting value
$$
\U_{\F}(\bfx) ~:=~ \lim_{a\to -\infty} \frac{d_{\F}^>(\bfx)\, a+d_{\infty}(\bfx,a\mathbf{1})\, b_{\F}^{\mathstrut}(\bfx)}
{d_{\F}^>(\bfx)+d_{\infty}(\bfx,a\mathbf{1})} ~=~ b_{\F}^{\mathstrut}(\bfx)-d_{\F}^>(\bfx).
$$
Similarly, when only $d_{\F}^<(\bfx)$ is bounded, we consider the limiting value
$$
\U_{\F}(\bfx) ~:=~ \lim_{b\to +\infty} \frac{d_{\infty}(\bfx,b\mathbf{1})\, a_{\F}^{\mathstrut}(\bfx)+d_{\F}^<(\bfx)\, b}
{d_{\infty}(\bfx,b\mathbf{1})+d_{\F}^<(\bfx)} ~=~ a_{\F}^{\mathstrut}(\bfx)+d_{\F}^<(\bfx).
$$
Finally, when both $d_{\F}^>(\bfx)$ and $d_{\F}^<(\bfx)$ are unbounded (i.e., when $\F$ is a constant function), we consider
$$
\U_{\F}(\bfx) ~:=~ \lim_{b\to +\infty} \frac{d_{\infty}(\bfx,b\mathbf{1})(-b)+d_{\infty}(\bfx,-b\mathbf{1})\, b} {d_{\infty}(\bfx,b\mathbf{1})+
d_{\infty}(\bfx,-b\mathbf{1})} ~=~ \textstyle{\frac 12\Min(\bfx)+\frac 12\Max(\bfx)}.
$$
We now define the function $\M_{\F}\colon\I^n\to\R$ by
\begin{equation}\label{eq:Urysohn}
\M_{\F}(\bfx) :=
\begin{cases}
\U_{\F}(\bfx), & \mbox{if $\bfx\in\Omega_{\F}$},\\
\frac 12 a_{\F}(\bfx)+\frac 12 b_{\F}(\bfx), & \mbox{if $\bfx\in\I^n\setminus\Omega_{\F}$}.
\end{cases}
\end{equation}
Even though the function $\M_{\F}$ is well defined on $\I^n$, there are still situations in which this function needs to be slightly modified on certain level sets to
ensure the solvability condition $\M_{\F}(\bfx)\in\delta_{\F}^{-1}\{\F(\bfx)\}$ (see Proposition~\ref{prop:SolEqFDG}).

In fact, suppose there exists $\bfx^*\in\I^n$ such that
\begin{equation}\label{eq:Case1N}
a_{\F}(\bfx^*)\notin\delta_{\F}^{-1}\{\F(\bfx^*)\}~\mbox{and}~\exists\bfx\in\F^{-1}\{\F(\bfx^*)\}\cap\Omega_{\F}~\mbox{such
that}~d_{\F}^<(\bfx)=0,
\end{equation}
or
\begin{equation}\label{eq:Case2N}
b_{\F}(\bfx^*)\notin\delta_{\F}^{-1}\{\F(\bfx^*)\}~\mbox{and}~\exists\bfx\in\F^{-1}\{\F(\bfx^*)\}\cap\Omega_{\F}~\mbox{such
that}~d_{\F}^>(\bfx)=0.
\end{equation}
In either case, we replace the restriction of $\M_{\F}$ to the level set $\F^{-1}\{\F(\bfx^*)\}$ by
$$
\tilde{\U}_{\F}(\bfx):=\frac{\tilde{d}_{\F}^>(\bfx)\, a_{\F}^{\mathstrut}(\bfx)+\tilde{d}_{\F}^<(\bfx)\,
b_{\F}^{\mathstrut}(\bfx)}{\tilde{d}_{\F}^>(\bfx)+\tilde{d}_{\F}^<(\bfx)}
$$
(or by the corresponding limiting value as defined above), where
\begin{eqnarray*}
\tilde{d}_{\F}^<(\bfx) &:=& d_{\infty}(\bfx,[\inf\I,a_{\F}(\bfx^*)]^n)=\Max(\bfx)-a_{\F}(\bfx^*),\\
\tilde{d}_{\F}^>(\bfx) &:=& d_{\infty}(\bfx,[b_{\F}(\bfx^*),\sup\I]^n)=b_{\F}(\bfx^*)-\Min(\bfx).
\end{eqnarray*}
We then note that $\tilde{d}_{\F}^>(\bfx)+\tilde{d}_{\F}^<(\bfx)>0$ so that the new function $\M_{\F}$ is well defined on $\I^n$.

\begin{remark}\label{rem:s9d8f7}
\begin{enumerate}
\item[(i)] When any of the conditions (\ref{eq:Case1N}) and (\ref{eq:Case2N}) hold, the proposed modification of $\M_{\F}$ is necessary to
ensure the solvability condition $\M_{\F}(\bfx)\in\delta_{\F}^{-1}\{\F(\bfx)\}$. Indeed, if e.g.\
$a_{\F}(\bfx^*)\notin\delta_{\F}^{-1}\{\F(\bfx^*)\}$, then $\M_{\F}$ must satisfy $\M_{\F}(\bfx^*)>a_{\F}(\bfx^*)$, which fails to hold with the
original definition (\ref{eq:Urysohn}) of $\M_{\F}$ whenever $\bfx^*\in\Omega_{\F}$ and $d_{\F}^<(\bfx^*)=0$. For instance, consider $\F\colon
[0,1]^2\to\R$ defined by $\F\equiv 1$ on $[\frac 12,1]^2\setminus\{\frac 12,\frac 12\}$ and $\F\equiv 0$ elsewhere. Then, for $\bfx^*=(\frac
34,\frac 12)$, we have $\U_{\F}(\bfx^*)=\frac 12$ and hence $(\delta_{\F}\circ\U_{\F})(\bfx^*)=\delta_{\F}(\frac 12)=0\neq 1=\F(\bfx^*)$. To
solve this problem, we consider $\M_{\F}=\Max$ on $[\frac 12,1]^2$ and $\M_{\F}=\Min$ elsewhere, and then we have $\delta_{\F}\circ\M_{\F}=\F$.

\item[(ii)] It is immediate to see that none of the conditions (\ref{eq:Case1N}) and (\ref{eq:Case2N}) hold as soon as $\delta_{\F}$ is a
continuous function, in which case condition (\ref{eq:ranFrandF}) immediately follows.
\end{enumerate}
\end{remark}

The next theorem essentially states that, thus defined, the function $\M_{\F}\colon\I^n\to\R$ is a nondecreasing and idempotent solution to
Chisini's equation (\ref{eq:FuncEqFG2}).

\begin{theorem}\label{thm:GF-ND-Id}
For any nondecreasing function $\F\colon\I^n\to\R$ satisfying (\ref{eq:ranFrandF}), we have $\ran(\M_{\F})\subseteq\I$ and
$\F=\delta_{\F}\circ\M_{\F}$. Moreover, $\M_{\F}$ is nondecreasing and idempotent.
\end{theorem}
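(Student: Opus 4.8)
The plan is to reduce the four assertions—$\ran(\M_{\F})\subseteq\I$, the identity $\F=\delta_{\F}\circ\M_{\F}$, monotonicity, and idempotency—to a single key claim supported by two structural facts about the level-set geometry. First I would record the preliminaries. Since $\F$ is nondecreasing, $\delta_{\F}$ is nondecreasing, so each set $\delta_{\F}^{-1}\{\F(\bfx)\}$ is an interval with infimum $a_{\F}(\bfx)$ and supremum $b_{\F}(\bfx)$, and distinct level sets are ordered: $\F(\bfx)<\F(\bfx')$ implies $b_{\F}(\bfx)\leqslant a_{\F}(\bfx')$, since a point of $\delta_{\F}^{-1}\{\F(\bfx)\}$ cannot exceed one of $\delta_{\F}^{-1}\{\F(\bfx')\}$ without violating monotonicity of $\delta_{\F}$. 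The crucial fact is that $d_{\F}^<$ is nondecreasing and $d_{\F}^>$ is nonincreasing: the lower level set $\F_{<}^{-1}(\F(\bfx))$ is a down-set and the upper level set is an up-set; for $\bfx\leqslant\bfx'$ one projects a near-optimal witness $\bfz$ for $d_{\infty}(\bfx',\F_{<}^{-1}(\cdot))$ to the componentwise minimum of $\bfz$ and $\bfx$, which stays in the down-set and is at least as close to $\bfx$, giving $d_{\F}^<(\bfx)\leqslant d_{\F}^<(\bfx')$; the statement for $d_{\F}^>$ is dual.

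Next I would prove the key claim that $\M_{\F}(\bfx)\in\delta_{\F}^{-1}\{\F(\bfx)\}$ for every $\bfx\in\I^n$; since $\delta_{\F}^{-1}\{\F(\bfx)\}\subseteq\I$, this simultaneously yields $\ran(\M_{\F})\subseteq\I$ and, through Proposition~\ref{prop:SolEqFDG}, the identity $\F=\delta_{\F}\circ\M_{\F}$. Each defining expression for $\U_{\F}$ is a convex combination of $a_{\F}(\bfx)$ and $b_{\F}(\bfx)$, so $\M_{\F}(\bfx)\in[a_{\F}(\bfx),b_{\F}(\bfx)]$, an interval whose endpoints are the infimum and supremum of the level set; the only way to leave the level set is to hit an endpoint the set does not contain. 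In the generic branch this occurs exactly when $d_{\F}^<(\bfx)=0$ (value $=a_{\F}(\bfx)$) or $d_{\F}^>(\bfx)=0$ (value $=b_{\F}(\bfx)$), which are precisely the configurations (\ref{eq:Case1N}) and (\ref{eq:Case2N}) on whose level sets $\M_{\F}$ has been redefined as $\tilde{\U}_{\F}$. For those sets I would use $\tilde{d}_{\F}^<(\bfx)=\Max(\bfx)-a_{\F}(\bfx^*)$ and $\tilde{d}_{\F}^>(\bfx)=b_{\F}(\bfx^*)-\Min(\bfx)$ and check that $\Max(\bfx)>a_{\F}(\bfx^*)$ whenever $a_{\F}(\bfx^*)$ is not attained (otherwise $\Max(\bfx)\leqslant a_{\F}(\bfx^*)$ would give $\F(\bfx)\leqslant\delta_{\F}(a_{\F}(\bfx^*))<\F(\bfx^*)$), and dually $\Min(\bfx)<b_{\F}(\bfx^*)$, so the combination lands strictly inside. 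The limiting branches require the analogous check, using Lemma~\ref{lemma:dab} to identify the missing endpoint with $\inf\I$ or $\sup\I$ together with the bounds $d_{\F}^>(\bfx)\leqslant b_{\F}(\bfx)-\Min(\bfx)$ and $d_{\F}^<(\bfx)\leqslant\Max(\bfx)-a_{\F}(\bfx)$.

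For monotonicity I would split $\bfx\leqslant\bfx'$ into two cases. If $\F(\bfx)<\F(\bfx')$, the key claim and the ordering of level sets give $\M_{\F}(\bfx)\leqslant b_{\F}(\bfx)\leqslant a_{\F}(\bfx')\leqslant\M_{\F}(\bfx')$. If $\F(\bfx)=\F(\bfx')$, then $a_{\F}$ and $b_{\F}$ are constant on this level set and, writing $\U_{\F}=a_{\F}+\lambda(\bfx)\,(b_{\F}-a_{\F})$ with $\lambda(\bfx)=d_{\F}^<(\bfx)/(d_{\F}^<(\bfx)+d_{\F}^>(\bfx))$, the distance monotonicity from the first paragraph makes $\lambda$ nondecreasing, hence $\M_{\F}$ nondecreasing since $b_{\F}\geqslant a_{\F}$. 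The boundary points carrying the $\frac12 a_{\F}+\frac12 b_{\F}$ branch are covered by the same inequalities (there $d_{\F}^<=d_{\F}^>=0$, so the neighbouring values are $a_{\F}$ or $b_{\F}$ as dictated by the monotone distances); the limiting branches $b_{\F}-d_{\F}^>$, $a_{\F}+d_{\F}^<$, and $\frac12\Min+\frac12\Max$ are each manifestly monotone in $\bfx$, as are the redefined $\tilde{d}_{\F}^<=\Max-a_{\F}(\bfx^*)$ and $\tilde{d}_{\F}^>=b_{\F}(\bfx^*)-\Min$; note that finiteness of $d_{\F}^<$ and $d_{\F}^>$ depends only on the level, so within one level set a single branch applies throughout.

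Finally, idempotency follows from a direct diagonal computation. For $\bfx=x\mathbf{1}$ I would show $d_{\F}^<(x\mathbf{1})=x-a_{\F}(x\mathbf{1})$ and $d_{\F}^>(x\mathbf{1})=b_{\F}(x\mathbf{1})-x$: the points $a'\mathbf{1}$ with $a'<a_{\F}(x\mathbf{1})$ lie in the lower level set and approach distance $x-a_{\F}(x\mathbf{1})$, while any $\bfz$ with $\|x\mathbf{1}-\bfz\|_{\infty}<x-a_{\F}(x\mathbf{1})$ would satisfy $\Min(\bfz)>a_{\F}(x\mathbf{1})$ and hence $\delta_{\F}(\Min(\bfz))\leqslant\F(\bfz)<\F(x\mathbf{1})$, forcing $\Min(\bfz)\leqslant a_{\F}(x\mathbf{1})$, a contradiction. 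Substituting into $\U_{\F}$ collapses the weighted mean to $x$, and the same value results from every limiting branch ($b_{\F}-d_{\F}^>=x$, $a_{\F}+d_{\F}^<=x$, $\frac12\Min+\frac12\Max=x$) and from $\tilde{\U}_{\F}$ (where $\tilde{d}_{\F}^<(x\mathbf{1})=x-a_{\F}$, $\tilde{d}_{\F}^>(x\mathbf{1})=b_{\F}-x$); the degenerate case $a_{\F}=b_{\F}$ forces $x=a_{\F}=b_{\F}$ and is covered by the $\frac12 a_{\F}+\frac12 b_{\F}$ branch. I expect the main obstacle to be the distance-monotonicity fact of the first paragraph and the exhaustive verification in the second that the convex-combination value, across all its generic, limiting, and redefined branches, never escapes the level set; once these are secured, both monotonicity and idempotency are short.
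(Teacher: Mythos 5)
Your proposal is correct and follows essentially the same route as the paper's proof in Appendix~\ref{app:GF-ND-Id}: membership of $\M_{\F}(\bfx)$ in $\delta_{\F}^{-1}\{\F(\bfx)\}$ via the convex-combination/endpoint analysis culminating in the $\Max(\bfx^*)=a_{\F}(\bfx^*)$ contradiction, monotonicity by splitting into equal versus distinct levels and using the monotonicity of Chebyshev distances to a fixed upper (lower) subset, and idempotency via the diagonal identities $d_{\F}^<(x\mathbf{1})=x-a_{\F}(x\mathbf{1})$ and $d_{\F}^>(x\mathbf{1})=b_{\F}(x\mathbf{1})-x$. One caution: your opening claim that $d_{\F}^<$ is nondecreasing and $d_{\F}^>$ nonincreasing is false as a global statement (the reference level set $\F_{<}^{-1}(\F(\bfx))$ changes with $\bfx$, and $d_{\F}^<$ can drop from $\infty$ to $0$ across a jump of $\F$), but you only invoke it for comparable points lying on a common level set, where your min-projection argument is a valid variant of the paper's Lemma~\ref{lemma:mCd}.
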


\begin{proof}
See Appendix~\ref{app:GF-ND-Id}.
\end{proof}

\begin{remark}\label{rem:GF-ND-Id}
\begin{enumerate}
\item[(i)] We will see in Section~\ref{sec:means} that nondecreasing and idempotent solutions of Chisini's equation are of particular interest. We
will call those solutions \emph{Chisini means} or \emph{level surface means} exactly as in the simple case when $\delta_{\F}$ is one-to-one.
Theorem~\ref{thm:GF-ND-Id} actually provides such a solution in a constructive way.

\item[(ii)] The idea of the construction of $\M_{\F}$ is the following. Let $\bfx^*\in\Omega_{\F}$ and, to keep the description simple, assume
that conditions (\ref{eq:Case1N}) and (\ref{eq:Case2N}) fail to hold. Then, on the whole level set $\F^{-1}\{\F(\bfx^*)\}$, we consider the
classical Urysohn function (hence the notation $\U_{\F}$) used in metric spaces, i.e., a continuous function defined by an inverse
distance-weighted average of the values $a_{\F}(\bfx^*)$ and $b_{\F}(\bfx^*)$:
\begin{equation}\label{eq:UryShe}
\U_{\F}(\bfx) =\frac{\frac{1}{d_{\F}^<(\bfx)}\, a_{\F}^{\mathstrut}(\bfx^*)+\frac{1}{d_{\F}^>(\bfx)}\,
b_{\F}^{\mathstrut}(\bfx^*)}{\frac{1}{d_{\F}^<(\bfx)}+\frac{1}{d_{\F}^>(\bfx)}}.
\end{equation}
Thus, the value $\U_{\F}(\bfx)$ partitions the interval $[a_{\F}(\bfx^*),b_{\F}(\bfx^*)]$ into two subintervals whose lengths are proportional
to $d_{\F}^<(\bfx)$ and $d_{\F}^>(\bfx)$, respectively. The two-dimensional case is illustrated in Figure~\ref{fig:GIU}. Moreover, looking into
the proof of Theorem~\ref{thm:GF-ND-Id}, it is easy to see that, from among all the Minkowski distances that could have been chosen to define
$\U_{\F}$, only the Chebyshev distance always ensures the nondecreasing monotonicity and idempotency of $\U_{\F}$.
\setlength{\unitlength}{.065\linewidth}
\begin{figure}[htb]
\begin{center}
\begin{picture}(8,9.2)
\put(0,1){\framebox(8,8)}%
\put(0,1){\line(1,1){8}}%
\put(3.5,9){\line(2,-1){4.5}}\put(0,3){\line(1,-1){2}}%
\put(0.5,2.5){\dashbox{.125}(3.5,3.5)}\put(4,6){\dashbox{.125}(1.83,1.83)}%
\put(4,6){\circle*{.13}}\put(3.75,6.25){\makebox(0,0){$\bfx$}}%
\put(1.3,8.62){\makebox(0,0){$\F^{-1}\{\F(\bfx^*)\}$}}\put(6.2,8.62){\makebox(0,0){$\F^{-1}_>(\F(\bfx^*))$}}%
\put(1,0.9){\line(0,1){0.2}}\put(1,0.5){\makebox(0,0){$a_{\F}(\bfx^*)$}}%
\put(6.5,0.9){\line(0,1){0.2}}\put(6.5,0.5){\makebox(0,0){$b_{\F}(\bfx^*)$}}%
\put(4.61,0.9){\line(0,1){0.2}}\put(4.61,0.5){\makebox(0,0){$\U_{\F}(\bfx)$}}%
\put(2.25,5.62){\makebox(0,0){$d_{\F}^<(\bfx)$}}\put(4.92,5.62){\makebox(0,0){$d_{\F}^>(\bfx)$}}%
\put(1,1){\dashbox{.125}(0,1)}\put(6.5,1){\dashbox{.125}(0,6.5)}
\end{picture}
\caption{Geometric interpretation of the function $\U_{\F}$} \label{fig:GIU}
\end{center}
\end{figure}
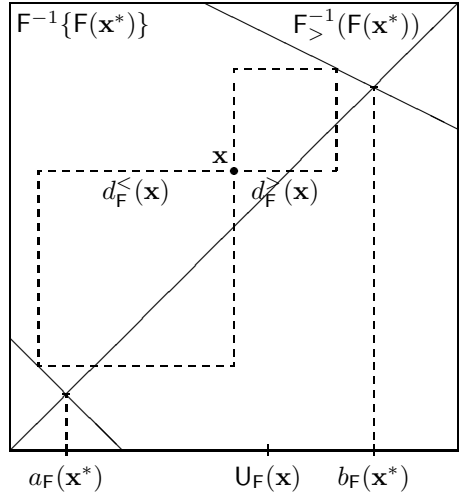

\item[(iii)] The definition of $\U_{\F}$, as given in (\ref{eq:UryShe}), recalls Shepard's metric interpolation technique \cite{GorWix78}, which
can be described as follows. Consider a function $F\colon\R^n\to\R$ and $p$ points $\bfx^{(1)},\ldots,\bfx^{(p)}\in\R^n$. Then, for any metric
$d$ on $\R^n$, the continuous extension of the function $U\colon\R^n\to\R$ defined by
$$
U(\bfx)=\sum_{k=1}^p\frac{F(\bfx^{(k)})}{d(\bfx,\bfx^{(k)})}\bigg{/}\sum_{k=1}^p\frac{1}{d(\bfx,\bfx^{(k)})}
$$
interpolates $F$ at the points $\bfx^{(1)},\ldots,\bfx^{(p)}$. By
letting $p=2$ and replacing the interpolating points by the lower and upper level sets of $\F$, we retrieve (\ref{eq:UryShe}) immediately.
\end{enumerate}
\end{remark}

\begin{example}\label{ex:FcontGnotcont}
Consider the continuous function $\F\colon [0,1]^2\to [0,1]$ defined by
$$
\textstyle{\F(x_1,x_2):=\Min\big(x_1,x_2,\frac 14+\Max(0,x_1+x_2-1)\big)}.
$$
Thus defined, $\F$ is an \emph{ordinal sum}\/ constructed from the {\L}ukasiewicz t-norm; see e.g.\ \cite{KleMes05}. Figure~\ref{fig:F1} shows
the 3D plot and the contour plot of $\F$. Figure~\ref{fig:G1} shows those of the function $\M_{\F}$. Note that the restriction of $\M_{\F}$ to
the open triangle of vertices $(\frac 14,\frac 34)$, $(\frac 14,\frac 14)$, $(\frac 34,\frac 14)$ is the function $\U_{\F}$, with
$a_{\F}(x_1,x_2)=\frac 14$, $b_{\F}(x_1,x_2)=\frac 12$, $d_{\F}^{<}(x_1,x_2)=\Min(x_1,x_2)-\frac 14$, and $d_{\F}^{>}(x_1,x_2)=\frac 12-\frac
12(x_1+x_2)$.
\end{example}

\begin{figure}[htb]
\includegraphics[height=.47\linewidth, keepaspectratio=true]{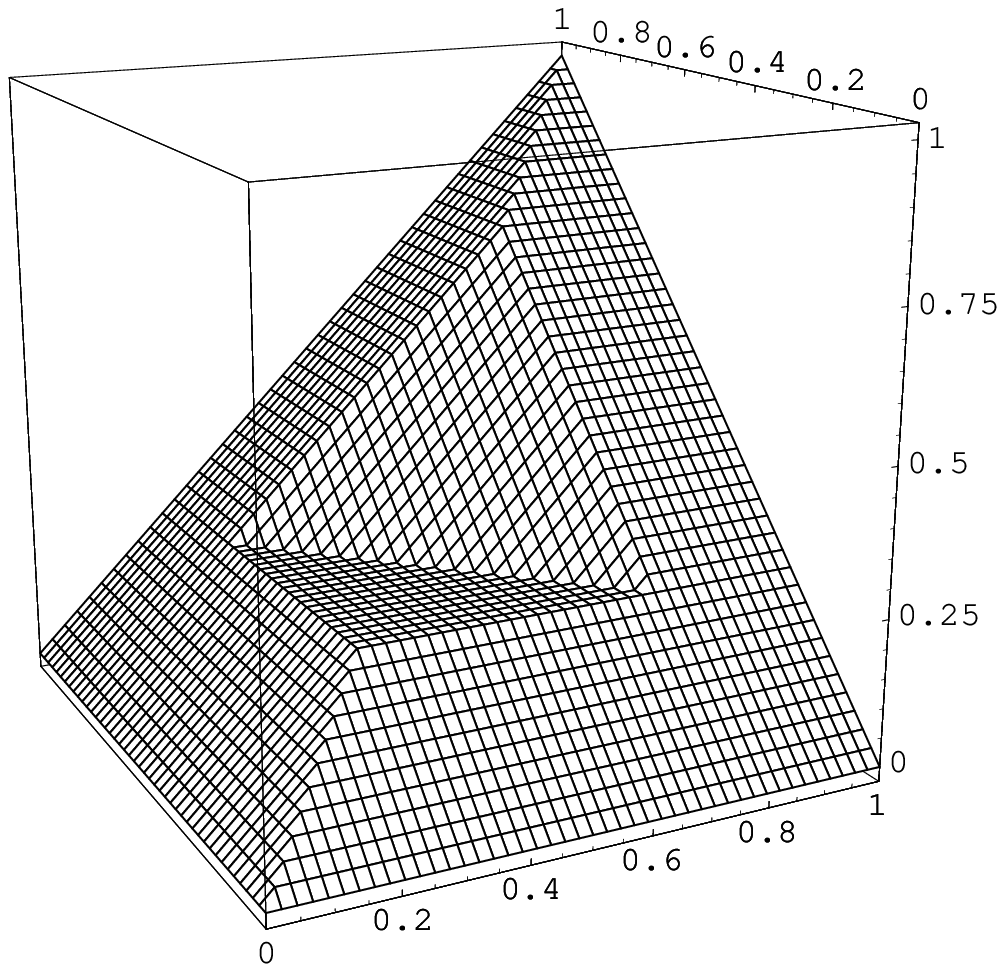}\hfill
\includegraphics[height=.45\linewidth, keepaspectratio=true]{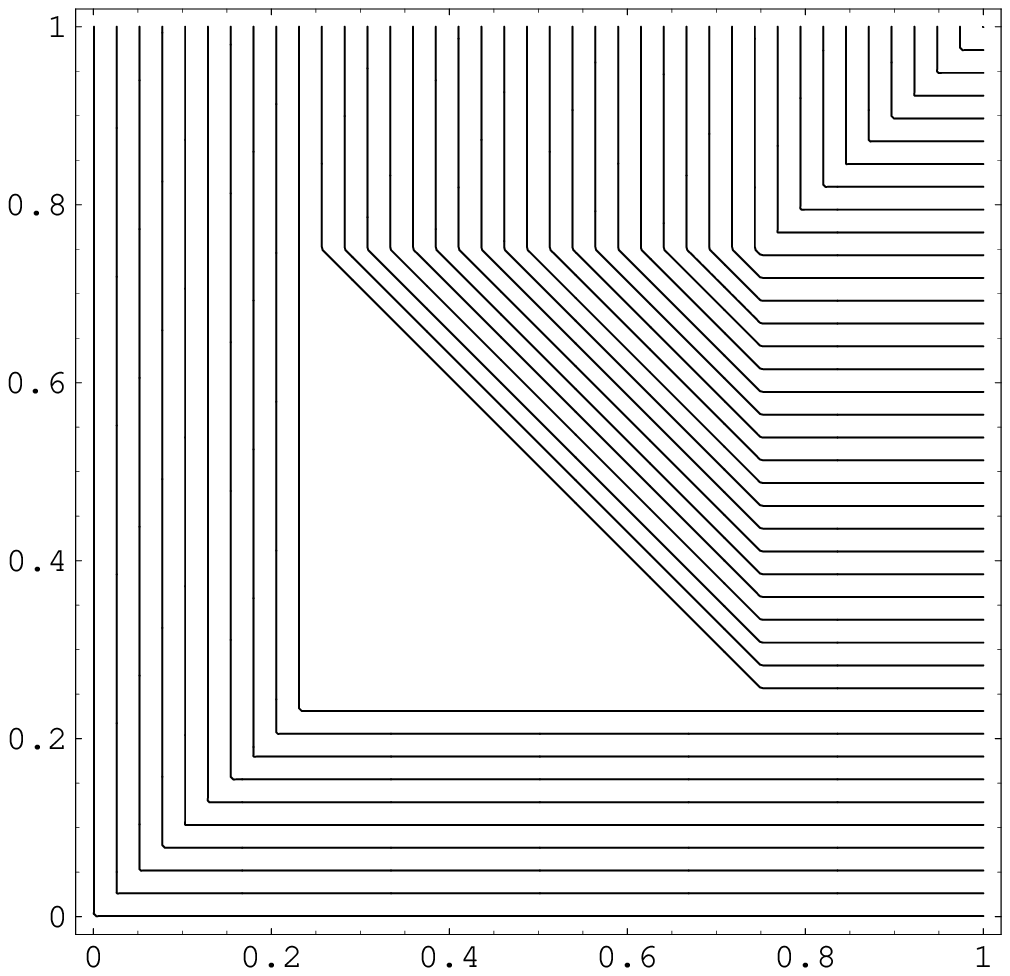}
\caption{Function $\F$ of Example~\ref{ex:FcontGnotcont} (3D plot and contour plot)} \label{fig:F1}
\end{figure}
\begin{figure}[htb]
\includegraphics[height=.47\linewidth, keepaspectratio=true]{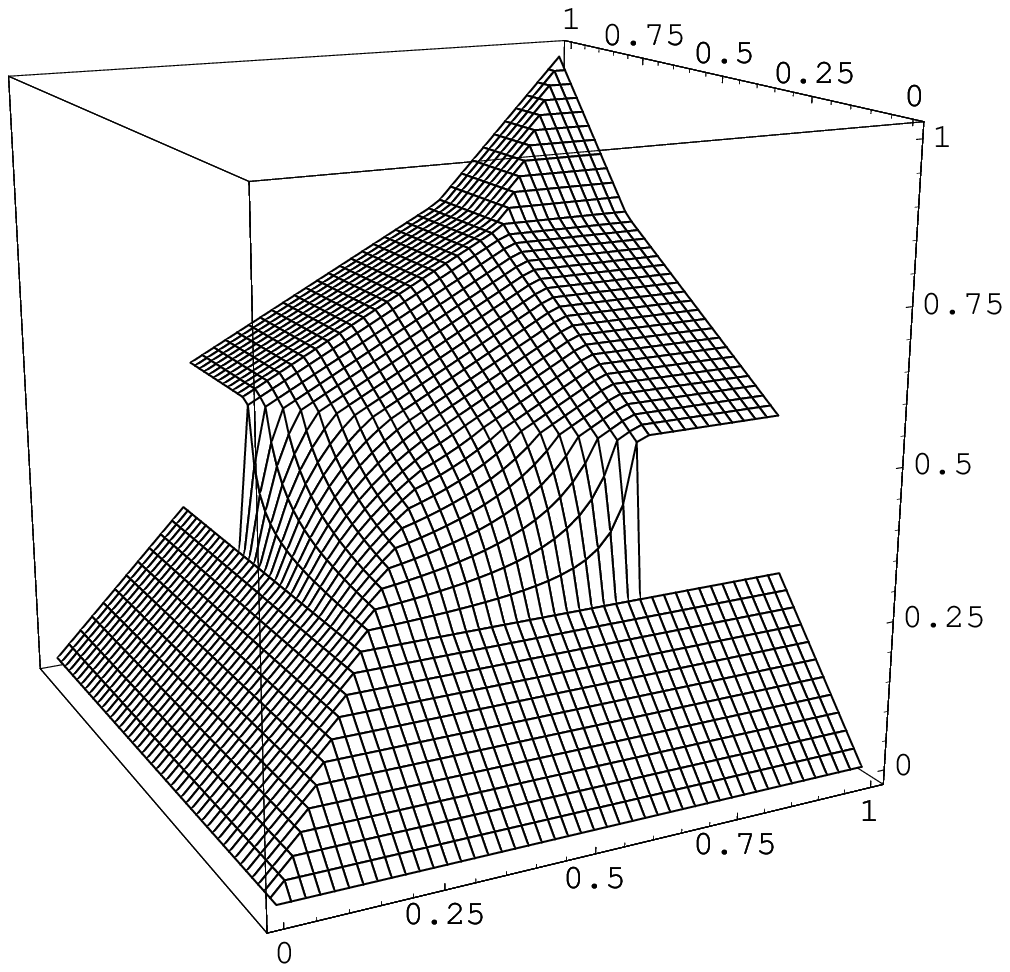}\hfill
\includegraphics[height=.45\linewidth, keepaspectratio=true]{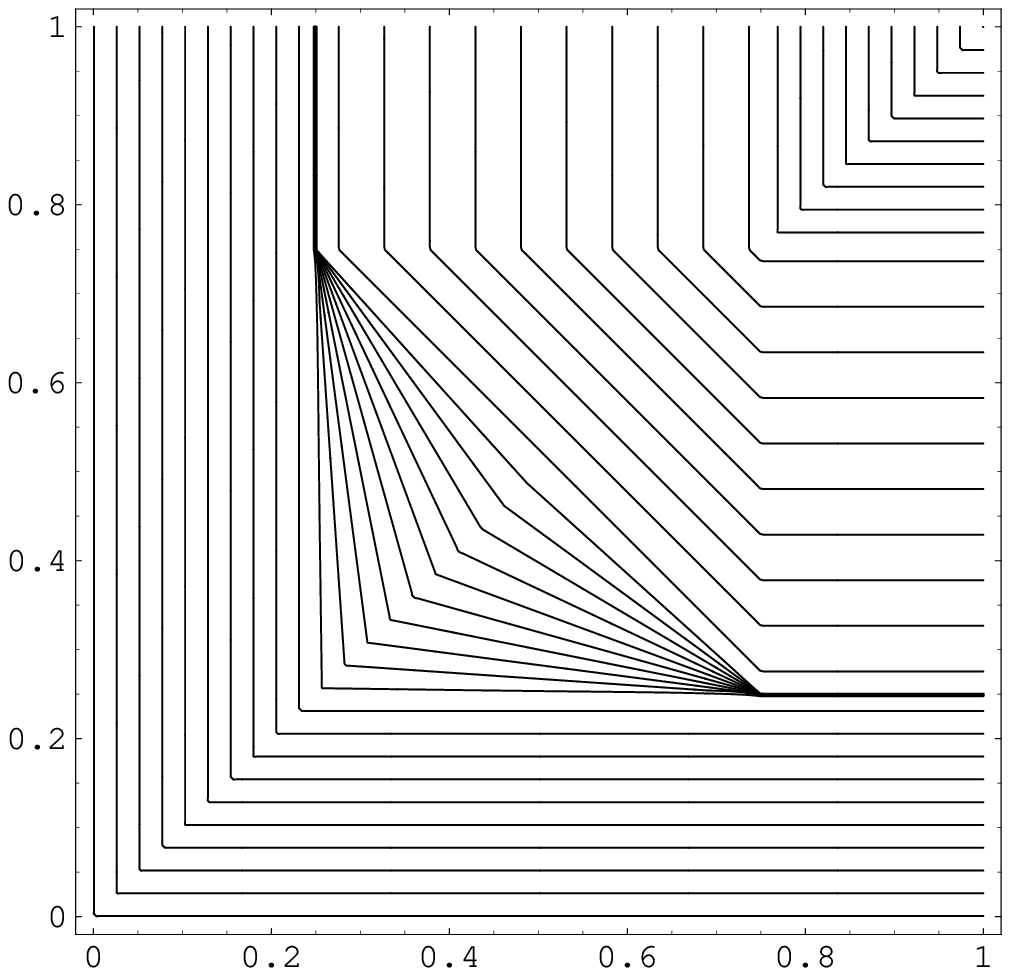}
\caption{Function $\M_{\F}$ of Example~\ref{ex:FcontGnotcont} (3D plot and contour plot)} \label{fig:G1}
\end{figure}

We now discuss a few properties of the solution $\M_{\F}$. Continuity issues will be discussed in the next section. We start with the following
straightforward result, which shows that $\M_{\F}$ can also be constructed from any strictly increasing transformation of $\F$.

\begin{proposition}\label{prop:TransfF2}
Let $\F\colon\I^n\to\R$ be a nondecreasing function satisfying condition (\ref{eq:ranFrandF}). For any strictly increasing function
$g\colon\ran(\delta_{\F})\to\R$, we have $\M_{\F}=\M_{g\circ\F}$.
\end{proposition}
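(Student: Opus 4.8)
The plan is to show that since $g$ is strictly increasing, hence injective, it leaves invariant every geometric ingredient entering the definition of $\M_{\F}$, so that $\M_{g\circ\F}$ is built from exactly the same data as $\M_{\F}$. Write $\mathsf{H}:=g\circ\F$. Because $g$ is defined on $\ran(\delta_{\F})=\ran(\F)$ by condition (\ref{eq:ranFrandF}), the composition $\mathsf{H}$ is well defined, and since $\F$ is nondecreasing and $g$ is strictly increasing, $\mathsf{H}$ is nondecreasing. Moreover $\delta_{\mathsf{H}}=g\circ\delta_{\F}$, whence $\ran(\delta_{\mathsf{H}})=g(\ran(\delta_{\F}))=g(\ran(\F))=\ran(\mathsf{H})$; thus $\mathsf{H}$ also satisfies (\ref{eq:ranFrandF}), so $\M_{\mathsf{H}}$ is well defined and it makes sense to compare it with $\M_{\F}$.

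The heart of the argument is that the entire level structure of $\F$ is preserved. Using the injectivity of $g$, for every $\bfx\in\I^n$ one has $\mathsf{H}(\bfx')=\mathsf{H}(\bfx)\iff\F(\bfx')=\F(\bfx)$, so the level sets coincide: $\mathsf{H}^{-1}\{\mathsf{H}(\bfx)\}=\F^{-1}\{\F(\bfx)\}$. Because $g$ is \emph{strictly} increasing (not merely injective), the orientation is preserved too: $\mathsf{H}(\bfx')<\mathsf{H}(\bfx)\iff\F(\bfx')<\F(\bfx)$, and likewise with $>$, so the lower and upper level sets agree, $\mathsf{H}_{<}^{-1}(\mathsf{H}(\bfx))=\F_{<}^{-1}(\F(\bfx))$ and $\mathsf{H}_{>}^{-1}(\mathsf{H}(\bfx))=\F_{>}^{-1}(\F(\bfx))$. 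Since $d_{\F}^<$ and $d_{\F}^>$ are Chebyshev-distance infima over precisely these sets, I immediately obtain $d_{\mathsf{H}}^<=d_{\F}^<$ and $d_{\mathsf{H}}^>=d_{\F}^>$, and consequently $\Omega_{\mathsf{H}}=\Omega_{\F}$. Finally, from $\delta_{\mathsf{H}}=g\circ\delta_{\F}$ together with the injectivity of $g$ one gets $\delta_{\mathsf{H}}^{-1}\{\mathsf{H}(\bfx)\}=\delta_{\F}^{-1}\{\F(\bfx)\}$ for every $\bfx$, so $a_{\mathsf{H}}=a_{\F}$ and $b_{\mathsf{H}}=b_{\F}$.

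With these five identities ($a$, $b$, $d^<$, $d^>$, $\Omega$ all invariant) the conclusion is essentially bookkeeping. The function $\U_{\F}$ and each of its limiting forms are defined solely through $a_{\F},b_{\F},d_{\F}^<,d_{\F}^>$ and $\Min,\Max$; since each agrees for $\F$ and $\mathsf{H}$, we get $\U_{\mathsf{H}}=\U_{\F}$, hence $\M_{\mathsf{H}}=\M_{\F}$ on $\Omega_{\F}=\Omega_{\mathsf{H}}$ and also on its complement, where $\M$ reduces to $\tfrac12 a+\tfrac12 b$. The one point requiring genuine care is the optional modification: I must check that (\ref{eq:Case1N}) and (\ref{eq:Case2N}) are triggered at the same $\bfx^*$ and yield the same values. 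But these conditions are phrased entirely via $a_{\F}(\bfx^*)$, $b_{\F}(\bfx^*)$, the membership in $\delta_{\F}^{-1}\{\F(\bfx^*)\}$, the level set $\F^{-1}\{\F(\bfx^*)\}$, the set $\Omega_{\F}$, and $d_{\F}^<,d_{\F}^>$, all already shown invariant, so the modification fires for $\F$ exactly when it fires for $\mathsf{H}$; and $\tilde{d}_{\F}^<$, $\tilde{d}_{\F}^>$ and $\tilde{\U}_{\F}$ depend only on the invariant quantities $a_{\F}(\bfx^*),b_{\F}(\bfx^*),a_{\F},b_{\F},\Min,\Max$, so the modified pieces coincide as well. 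Combining all branches yields $\M_{\F}=\M_{g\circ\F}$. I do not expect a real obstacle here; the only substantive step is the injectivity/orientation argument of the second paragraph, and the main risk is merely overlooking one of the limiting or modified branches in the piecewise definition of $\M_{\F}$.
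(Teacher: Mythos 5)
Your proof is correct and is precisely the verification the paper has in mind: the paper states the proposition as a ``straightforward result'' with no written proof, and the intended argument is exactly yours, namely that the strict monotonicity (hence injectivity) of $g$ preserves all the level-set data $a_{\F}$, $b_{\F}$, $d_{\F}^<$, $d_{\F}^>$, $\Omega_{\F}$, the trigger conditions (\ref{eq:Case1N})--(\ref{eq:Case2N}), and the modified distances, so every branch of the definition of $\M_{\F}$ coincides with the corresponding branch for $g\circ\F$. You also correctly handle the preliminary point that $g\circ\F$ is nondecreasing and satisfies (\ref{eq:ranFrandF}), so that $\M_{g\circ\F}$ is well defined.
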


Proposition~\ref{prop:TransfF2} has an important application. Any $g\in Q(\delta_{\F})$ such that $\dom(g)=\ran(\delta_\F)$ is strictly
increasing and $g\circ\F$ is range-idempotent (see Section~\ref{sec:TrivialS}). The calculation of $\M_{\F}$ can then be greatly simplified if we
consider $\M_{g\circ\F}$ instead. Observe for instance that, for every $\bfx\in\I^n$ such that $a_{\F}(\bfx)=b_{\F}(\bfx)$, we have
$\M_{g\circ\F}(\bfx)=\M_{\F}(\bfx)=(g\circ\F)(\bfx)$.

We now investigate the effect of \emph{dualization} of $\F$ over $\M_{\F}$ when $\F$ is defined on a compact domain $[a,b]^n$. Recall first the
concepts of \emph{dual} and \emph{self-dual} functions (see \cite{GarMar08} for a recent background). The \emph{dual} of a function $\F\colon
[a,b]^n\to [a,b]$ is the function $\F^d\colon [a,b]^n\to [a,b]$, defined by $\F^d=\psi\circ\F\circ(\psi,\ldots,\psi)$, where $\psi\colon
[a,b]\to [a,b]$ is the order-reversing involutive transformation $\psi(x)=a+b-x$ ($\psi^{-1}=\psi$). A function $\F\colon [a,b]^n\to [a,b]$ is
said to be \emph{self-dual} if $\F^d=\F$.

The following results essentially states that the map $\F\mapsto\M_{\F}$ commutes with dualization. In rough terms, our ``metric interpolation''
commutes with dualization.

\begin{proposition}\label{prop:dualization}
Let $\F\colon [a,b]^n\to [a,b]$ be a nondecreasing function satisfying condition (\ref{eq:ranFrandF}). Then $\M_{\F^d}=\M^d_{\F}$. In
particular, if $\F$ is self-dual then so is $\M_{\F}$.
\end{proposition}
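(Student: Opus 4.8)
The plan is to show that every ingredient entering the definition of $\M_{\F^d}$ is the $\psi$-conjugate of the corresponding ingredient for $\F$, and then to invoke the affinity of $\psi$ to conclude that the weighted averages match. Throughout, write $\psi(\bfx):=(\psi(x_1),\ldots,\psi(x_n))$ for the componentwise reflection, so that $\F^d=\psi\circ\F\circ\psi$ and, by definition, $\M^d_{\F}=\psi\circ\M_{\F}\circ\psi$; the latter is well defined since $\ran(\M_{\F})\subseteq [a,b]$ by Theorem~\ref{thm:GF-ND-Id}. Two elementary features of $\psi$ drive the whole argument: it is an order-reversing affine involution on $[a,b]$, and, on $\R^n$, it is an isometry for the Chebyshev distance, i.e.\ $d_{\infty}(\psi(\bfx),\psi(\bfx'))=d_{\infty}(\bfx,\bfx')$, whence $d_{\infty}(\bfx,\psi(S))=d_{\infty}(\psi(\bfx),S)$ for every $S\subseteq\R^n$. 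The goal is the identity $\M_{\F^d}=\psi\circ\M_{\F}\circ\psi$; the self-dual case is then immediate by taking $\F=\F^d$.

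First I would track the level-set data. From $\F^d=\psi\circ\F\circ\psi$ one gets $\delta_{\F^d}=\psi\circ\delta_{\F}\circ\psi$, hence $\delta_{\F^d}^{-1}\{\F^d(\bfx)\}=\psi\big(\delta_{\F}^{-1}\{\F(\psi(\bfx))\}\big)$. Since $\psi$ reverses order, $\inf$ and $\sup$ swap under it, giving $a_{\F^d}(\bfx)=\psi(b_{\F}(\psi(\bfx)))$ and $b_{\F^d}(\bfx)=\psi(a_{\F}(\psi(\bfx)))$. Because $\psi$ reverses the strict inequalities defining the lower and upper level sets, $(\F^d)^{-1}_{<}(\F^d(\bfx))=\psi\big(\F^{-1}_{>}(\F(\psi(\bfx)))\big)$ and symmetrically for the upper set; combined with the isometry property this yields $d^{<}_{\F^d}(\bfx)=d^{>}_{\F}(\psi(\bfx))$ and $d^{>}_{\F^d}(\bfx)=d^{<}_{\F}(\psi(\bfx))$. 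In particular $\Omega_{\F^d}=\psi(\Omega_{\F})$, so the two branches of the definition of $\M$ correspond under $\psi$.

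Next I would assemble these into $\U_{\F^d}$ using the key identity that $\psi$, being affine of slope $-1$, commutes with inverse-distance-weighted averages:
\[
\frac{\lambda\,\psi(s)+\mu\,\psi(t)}{\lambda+\mu}=\psi\!\left(\frac{\lambda s+\mu t}{\lambda+\mu}\right)\qquad(\lambda,\mu\geqslant 0,\ \lambda+\mu>0).
\]
Substituting the four swapped quantities above into the definition of $\U_{\F^d}(\bfx)$ turns its numerator into $d^{<}_{\F}(\psi(\bfx))\,\psi(b_{\F}(\psi(\bfx)))+d^{>}_{\F}(\psi(\bfx))\,\psi(a_{\F}(\psi(\bfx)))$, and the identity then gives $\U_{\F^d}(\bfx)=\psi(\U_{\F}(\psi(\bfx)))$. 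The same computation, reading the identity with one infinite weight, handles each limiting definition of $\U_{\F}$ (only $d^{>}$ bounded, only $d^{<}$ bounded, and the constant case, where $\Min(\psi(\bfx))=\psi(\Max(\bfx))$); and the midpoint value $\tfrac12 a_{\F}+\tfrac12 b_{\F}$ on $\I^n\setminus\Omega_{\F}$ dualizes by the same affinity. Since the domain $[a,b]^n$ is compact, $\inf\I=a$ and $\sup\I=b$ lie in $\I$, so these limiting branches are exactly those allowed by the construction.

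The step I expect to require the most care is the interaction with the modification prescribed by conditions (\ref{eq:Case1N}) and (\ref{eq:Case2N}). Here I would check that (\ref{eq:Case1N}) holds for $\F$ at $\bfx^*$ if and only if (\ref{eq:Case2N}) holds for $\F^d$ at $\psi(\bfx^*)$: the membership test transfers because $a_{\F}(\bfx^*)\in\delta_{\F}^{-1}\{\F(\bfx^*)\}$ is equivalent, under the bijection $\psi$, to $b_{\F^d}(\psi(\bfx^*))\in\delta_{\F^d}^{-1}\{\F^d(\psi(\bfx^*))\}$, and the condition $d^{<}_{\F}=0$ transfers to $d^{>}_{\F^d}=0$ by the distance relations above. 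It then remains to verify that the modified distances dualize: using $\Min(\psi(\bfx))=\psi(\Max(\bfx))$ one computes $\tilde{d}^{<}_{\F^d}(\bfx)=\Max(\bfx)-\psi(b_{\F}(\bfx^*))=b_{\F}(\bfx^*)-\Min(\psi(\bfx))=\tilde{d}^{>}_{\F}(\psi(\bfx))$, and symmetrically, so that the boxes $[\inf\I,a_{\F}(\bfx^*)]^n$ and $[b_{\F}(\bfx^*),\sup\I]^n$ are interchanged by $\psi$. Feeding these back through the affine identity gives $\tilde{\U}_{\F^d}=\psi\circ\tilde{\U}_{\F}\circ\psi$ on the corresponding level set, and assembling all branches yields $\M_{\F^d}=\psi\circ\M_{\F}\circ\psi=\M^d_{\F}$, as required.
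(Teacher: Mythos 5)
Your proof is correct and takes essentially the same route as the paper's: the paper simply asserts the conjugation identities $a_{\F^d}=\psi\circ b_{\F}\circ(\psi,\ldots,\psi)$, $b_{\F^d}=\psi\circ a_{\F}\circ(\psi,\ldots,\psi)$, $d_{\F^d}^{<}=d_{\F}^{>}\circ(\psi,\ldots,\psi)$, $d_{\F^d}^{>}=d_{\F}^{<}\circ(\psi,\ldots,\psi)$ and their tilde analogues as ``straightforward to verify'' and then declares the conclusion immediate. You have supplied exactly those verifications --- the Chebyshev isometry and affinity of $\psi$, the treatment of the limiting branches, and the swap of conditions (\ref{eq:Case1N}) and (\ref{eq:Case2N}) under dualization --- so nothing further is needed.
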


\begin{proof}
It is straightforward to verify that $a_{\F^d}=\psi\circ b_{\F}\circ(\psi,\ldots,\psi)$, $b_{\F^d}=\psi\circ a_{\F}\circ(\psi,\ldots,\psi)$,
$d_{\F^d}^{<}=d_{\F}^{>}\circ(\psi,\ldots,\psi)$, $d_{\F^d}^{>}=d_{\F}^{<}\circ(\psi,\ldots,\psi)$,
$\tilde{d}_{\F^d}^{<}=\tilde{d}_{\F}^{>}\circ(\psi,\ldots,\psi)$, and $\tilde{d}_{\F^d}^{>}=\tilde{d}_{\F}^{<}\circ(\psi,\ldots,\psi)$. It is
then immediate to see that $\M_{\F^d}=\M^d_{\F}$.
\end{proof}

Although the map $\F\mapsto\M_{\F}$ commutes with dualization, it may not commute with restrictions, i.e., we may have
$\M_{\F|_{\J^n}}\neq\M_{\F}|_{\J^n}$ for some $\J\subseteq\I$. This shows that $\M_{\F}$ is not a ``local'' concept; its values depend not only
on $\F$ but also on the domain $\I^n$ considered. This fact can be illustrated by the binary function $\F(x_1,x_2)=\Min(x_1+x_2,\frac 12)$ over
the sets $\I=\R$ and $\J=[0,1]$. We have $\M_{\F}(x_1,x_2)=\frac 12(x_1+x_2)$ and
$$
\M_{\F|_{\J^2}}(x_1,x_2)=
\begin{cases}
\frac 12(x_1+x_2), & \mbox{if $|x_1-x_2|\leqslant\frac 12\, ,$}\\
\Max(x_1,x_2)-\frac 14\, , & \mbox{if $|x_1-x_2|\geqslant\frac 12\, .$}
\end{cases}
$$
However, the following result shows that, although $\M_{\F|_{\J^n}}$ and $\M_{\F}|_{\J^n}$ may be different, both functions solve the Chisini
equation associated with $\F|_{\J^n}$.

\begin{proposition}
Let $\F\colon\I^n\to\R$ be a nondecreasing function satisfying condition (\ref{eq:ranFrandF}) and let $\J\subseteq\I$. Then
$\F|_{\J^n}=\delta_{\F|_{\J^n}}\circ(\M_{\F}|_{\J^n})$.
\end{proposition}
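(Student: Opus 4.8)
The plan is to show that $\M_{\F}|_{\J^n}$ solves the Chisini equation associated with $\F|_{\J^n}$, i.e.\ that $\F|_{\J^n}=\delta_{\F|_{\J^n}}\circ(\M_{\F}|_{\J^n})$. By Proposition~\ref{prop:SolEqFDG}, since $\F|_{\J^n}$ is nondecreasing (being a restriction of the nondecreasing $\F$) and hence $\delta_{\F|_{\J^n}}$ is monotonic, it suffices to verify the solvability condition
\begin{equation*}
(\M_{\F}|_{\J^n})(\bfx)\in\delta_{\F|_{\J^n}}^{-1}\{\F|_{\J^n}(\bfx)\}\qquad\forall\bfx\in\J^n,
\end{equation*}
which amounts to two things: first, that $\M_{\F}(\bfx)\in\J$ for every $\bfx\in\J^n$ (so that the composition even makes sense), and second, that $\delta_{\F}(\M_{\F}(\bfx))=\F(\bfx)$ with the evaluation point $\M_{\F}(\bfx)$ lying in $\J$. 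The second equality is free: Theorem~\ref{thm:GF-ND-Id} already gives $\delta_{\F}\circ\M_{\F}=\F$ on all of $\I^n$, and restricting both sides to $\J^n$ yields $\delta_{\F}(\M_{\F}(\bfx))=\F(\bfx)$; once we know $\M_{\F}(\bfx)\in\J$, the left-hand side equals $\delta_{\F|_{\J^n}}(\M_{\F}|_{\J^n}(\bfx))$ by definition of the restricted diagonal. So the entire burden reduces to the single range inclusion $\M_{\F}(\J^n)\subseteq\J$.

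First I would establish that range inclusion. Fix $\bfx\in\J^n$. Theorem~\ref{thm:GF-ND-Id} guarantees $\M_{\F}$ is idempotent and nondecreasing on $\I^n$, so from $\Min(\bfx)\mathbf{1}\leqslant\bfx\leqslant\Max(\bfx)\mathbf{1}$ and monotonicity together with idempotency we get
\begin{equation*}
\Min(\bfx)=\M_{\F}(\Min(\bfx)\mathbf{1})\leqslant\M_{\F}(\bfx)\leqslant\M_{\F}(\Max(\bfx)\mathbf{1})=\Max(\bfx).
\end{equation*}
Since $\bfx\in\J^n$ and $\J$ is an interval, both $\Min(\bfx)$ and $\Max(\bfx)$ lie in $\J$, so $\M_{\F}(\bfx)$ is squeezed into the interval $[\Min(\bfx),\Max(\bfx)]\subseteq\J$. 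This is the crux: the fact that $\M_{\F}$ is an \emph{internal} (averaging) function — bounded between the min and the max of its arguments — is exactly what makes its values respect any subinterval $\J$, even though, as the preceding counterexample shows, $\M_{\F}|_{\J^n}$ need not coincide with $\M_{\F|_{\J^n}}$.

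Having shown $\M_{\F}(\bfx)\in\J$, I would close the argument by chaining the two observations: for every $\bfx\in\J^n$,
\begin{equation*}
\delta_{\F|_{\J^n}}\big((\M_{\F}|_{\J^n})(\bfx)\big)=\delta_{\F}(\M_{\F}(\bfx))=\F(\bfx)=\F|_{\J^n}(\bfx),
\end{equation*}
the first equality because $\M_{\F}(\bfx)\in\J$ means $\delta_{\F}$ and $\delta_{\F|_{\J^n}}$ agree there, and the middle equality being Theorem~\ref{thm:GF-ND-Id}. I do not anticipate a genuine obstacle here — the whole result is a soft consequence of idempotency plus monotonicity yielding internality. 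The only point requiring a moment's care is the derivation of internality itself: one must invoke idempotency ($\M_{\F}(x\mathbf{1})=x$) at the corner points $\Min(\bfx)\mathbf{1}$ and $\Max(\bfx)\mathbf{1}$, which is licensed precisely because these diagonal points lie in $\I^n$ and $\M_{\F}$ is idempotent on all of $\I^n$ by Theorem~\ref{thm:GF-ND-Id}.
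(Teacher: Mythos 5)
Your proof is correct and follows essentially the same route as the paper: restrict the identity $\F=\delta_{\F}\circ\M_{\F}$ to $\J^n$ and then use the internality of $\M_{\F}$ (a consequence of its nondecreasing monotonicity and idempotency from Theorem~\ref{thm:GF-ND-Id}) to conclude that $\M_{\F}(\J^n)\subseteq\J$, so that $\delta_{\F}$ may be replaced by $\delta_{\F|_{\J^n}}$. You merely spell out the internality argument more explicitly than the paper does.
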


\begin{proof}
We have $\F|_{\J^n}=(\delta_{\F}\circ\M_{\F})|_{\J^n}=\delta_{\F}\circ\M_{\F}|_{\J^n}$. Since $\M_{\F}|_{\J^n}$ is nondecreasing and idempotent,
it takes on its values in $\J$. Hence the result.
\end{proof}

As far as the symmetries of $\F$ are concerned, we also have the following result.

\begin{proposition}\label{prop:symm}
Let $\F\colon\I^n\to\R$ be a nondecreasing function satisfying condition (\ref{eq:ranFrandF}). Then $\sigma\in\mathfrak{S}_n$ is a symmetry of
$\F$ if and only if it is a symmetry of $\M_{\F}$.
\end{proposition}

\begin{proof}
The condition is clearly sufficient. Let us show that it is necessary. We can assume without loss of generality that $\sigma$ is a transposition
$(ij)$, with $i,j\in [n]$, $i\neq j$. Clearly, $\sigma$ is a symmetry of $a_{\F}$, $b_{\F}$, $\tilde{d}_{\F}^<$, and $\tilde{d}_{\F}^>$. To show
that it is also a symmetry of both $d_{\F}^<$ and $d_{\F}^>$, we only need to show that, for any given $\bfx\in\I^n$, the level set
$\F^{-1}\{\F(\bfx)\}$ is symmetric with respect to the hyperplane $x_i=x_j$. Let $\bfx'\in \F^{-1}\{\F(\bfx)\}$. Then $\F(\bfx')=\F(\bfx)$ and
hence $\F([\bfx']_{\sigma})=\F([\bfx]_{\sigma})$. That is, $[\bfx']_{\sigma}\in \F^{-1}\{\F([\bfx]_{\sigma})\}=\F^{-1}\{\F(\bfx)\}$.
\end{proof}

\section{Nondecreasing, idempotent, and continuous solutions}

In this section, assuming again that $\F$ is nondecreasing, we yield necessary and sufficient conditions on $\F$ for the associated Chisini
equation to have at least one continuous solution. We also show that the idempotent solution $\M_{\F}$ is continuous whenever a continuous
solution exists (see Theorem~\ref{thm:MainCont}). As we shall see, continuous solutions may exist even if $\F$ is not continuous. However, given
a continuous function $\F$, the associated Chisini equation may have no continuous solutions. Thus, surprisingly enough, continuity of $\F$ is
neither necessary nor sufficient to ensure the existence of continuous solutions.

The following lemma states that if a continuous solution of Chisini's equation exists then $\delta_{\F}^{-1}\{\F(\bfx)\}$ must be a singleton
for every $\bfx\in\I^n\setminus\Omega_{\F}$. Equivalently,
\begin{equation}\label{eq:Cond1C}
a_{\F}(\bfx)=b_{\F}(\bfx),\qquad \forall\bfx\in\I^n\setminus\Omega_{\F}.
\end{equation}

\begin{lemma}\label{lemma:disc}
Let $\F\colon\I^n\to\R$ be a nondecreasing function satisfying condition (\ref{eq:ranFrandF}) and let $\G\colon\I^n\to\I$ be any solution of
Chisini's equation (\ref{eq:FuncEqFG2}). Suppose there exists $\bfx^*\in\I^n\setminus\Omega_{\F}$ such that $a_{\F}(\bfx^*)< b_{\F}(\bfx^*)$.
Then $\bfx^*$ is a discontinuity point of $\G$.
\end{lemma}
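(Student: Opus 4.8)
The plan is to argue by contradiction: assume that $\G$ is continuous at $\bfx^*$ and derive the inequality $b_{\F}(\bfx^*)\le a_{\F}(\bfx^*)$, which contradicts the standing hypothesis $a_{\F}(\bfx^*)<b_{\F}(\bfx^*)$. The whole argument rests on the membership relation (\ref{eq:GindFcF}), namely $\G(\bfx)\in\delta_{\F}^{-1}\{\F(\bfx)\}$ (equivalently $\delta_{\F}(\G(\bfx))=\F(\bfx)$), which every solution of Chisini's equation satisfies, together with the nondecreasing monotonicity of $\delta_{\F}$. The idea is that $\bfx^*$ is pinched between its lower and upper level sets, while the values of $\G$ on those two families of neighbours are pushed, respectively, below $a_{\F}(\bfx^*)$ and above $b_{\F}(\bfx^*)$.

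The key step is a pair of comparison inequalities. I claim that $\F(\bfx')<\F(\bfx^*)$ implies $\G(\bfx')\le a_{\F}(\bfx^*)$, and dually that $\F(\bfx'')>\F(\bfx^*)$ implies $\G(\bfx'')\ge b_{\F}(\bfx^*)$. To prove the first, fix any $z\in\delta_{\F}^{-1}\{\F(\bfx^*)\}$, a set which is nonempty by condition (\ref{eq:ranFrandF}). Then $\delta_{\F}(\G(\bfx'))=\F(\bfx')<\F(\bfx^*)=\delta_{\F}(z)$, so nondecreasing monotonicity of $\delta_{\F}$ forces $\G(\bfx')\le z$ (were $\G(\bfx')>z$, monotonicity would give $\delta_{\F}(\G(\bfx'))\ge\delta_{\F}(z)$). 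Taking the infimum over all such $z$ yields $\G(\bfx')\le a_{\F}(\bfx^*)$. The second inequality follows by the symmetric argument with suprema. Note that only the nondecreasing, not strict, monotonicity of $\delta_{\F}$ is used here, which is precisely what makes the two distinct endpoints $a_{\F}$ and $b_{\F}$, rather than a single value, the relevant quantities.

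Finally, since $\bfx^*\in\I^n\setminus\Omega_{\F}$ we have $d_{\F}^<(\bfx^*)=d_{\F}^>(\bfx^*)=0$, so the infima defining these distances are approached in the limit: there exist sequences $\bfx'_k\to\bfx^*$ with $\F(\bfx'_k)<\F(\bfx^*)$ and $\bfx''_k\to\bfx^*$ with $\F(\bfx''_k)>\F(\bfx^*)$. The two comparison inequalities give $\G(\bfx'_k)\le a_{\F}(\bfx^*)$ and $\G(\bfx''_k)\ge b_{\F}(\bfx^*)$ for every $k$. If $\G$ were continuous at $\bfx^*$, passing to the limit in both relations would yield $\G(\bfx^*)\le a_{\F}(\bfx^*)$ and $\G(\bfx^*)\ge b_{\F}(\bfx^*)$, hence $b_{\F}(\bfx^*)\le a_{\F}(\bfx^*)$, the desired contradiction. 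I expect the only delicate point to be the clean extraction of the two approximating sequences from the vanishing of $d_{\F}^<$ and $d_{\F}^>$, which in particular requires that $\F_{<}^{-1}(\F(\bfx^*))$ and $\F_{>}^{-1}(\F(\bfx^*))$ be nonempty; this is guaranteed because a distance equal to $0$ is finite and hence cannot be the conventional value $\infty$ assigned to $d_{\infty}(\,\cdot\,,\varnothing)$. The comparison inequalities themselves are routine once monotonicity is invoked.
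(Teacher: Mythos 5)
Your proposal is correct and follows essentially the same route as the paper: both use the vanishing of $d_{\F}^<(\bfx^*)$ and $d_{\F}^>(\bfx^*)$ to produce points arbitrarily close to $\bfx^*$ in the lower and upper level sets, and both combine $\G(\bfx)\in\delta_{\F}^{-1}\{\F(\bfx)\}$ with the monotonicity of $\delta_{\F}$ to force $\G\leqslant a_{\F}(\bfx^*)$ there and $\G\geqslant b_{\F}(\bfx^*)$ here, yielding a persistent jump. The only cosmetic difference is that the paper extracts the approximating points along line segments $\bfx^*-h\mathbf{u}$, $\bfx^*+h\mathbf{v}$ rather than general sequences.
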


\begin{proof}
Let $\G\colon\I^n\to\I$ be a solution of Chisini's equation (\ref{eq:FuncEqFG2}) and assume that there exists
$\bfx^*\in\I^n\setminus\Omega_{\F}$ such that $a_{\F}(\bfx^*)< b_{\F}(\bfx^*)$. It follows immediately that $\bfx^*\notin\mathrm{diag}(\I^n)$.
Now, since $d_{\F}^>(\bfx^*)=d_{\F}^<(\bfx^*)=0$, there exist unit vectors $\mathbf{u},\mathbf{v}\in\R^n$, with nonnegative components, and a
number $h^*>0$ such that $\F(\bfx^*-h\mathbf{u})<\F(\bfx^*)<\F(\bfx^*+h\mathbf{v})$ for all $h\in\left]0,h^*\right[$. Since
$\G(\bfx)\in\delta_{\F}^{-1}\{\F(\bfx)\}$ for all $\bfx\in\I^n$, it follows that $\G(\bfx^*-h\mathbf{u})\leqslant a_{\F}(\bfx^*)<
b_{\F}(\bfx^*)\leqslant \G(\bfx^*+h\mathbf{v})$ for all $h\in\left]0,h^*\right[$, which means that $\G$ is discontinuous at $\bfx^*$.
\end{proof}

The following example shows that, even if $\F$ is continuous, we may have $a_{\F}\neq b_{\F}$ on $\I^n\setminus\Omega_{\F}$, in which case the
corresponding Chisini equation has no continuous solutions.

\begin{example}\label{ex:FcontGnotcont2}
Consider again the function $\F$ described in Example~\ref{ex:FcontGnotcont}. We can easily see that any solution $\G\colon\I^2\to\I$ of the
Chisini equation (\ref{eq:FuncEqFG2}) is discontinuous along the line segment $\,]\frac 34,1]\times \{\frac 14\}$ (and, by symmetry, along the
line segment $\{\frac 14\}\times\,]\frac 34,1]$). Indeed, for any $x\in\,]\frac 34,1]$ and any $0<h<\frac 18$, we have $\F(x,\frac 14\pm
h)=\frac 14\pm h$, which implies that $(x,\frac 14)\in [0,1]^2\setminus\Omega_{\F}$. However, $\delta_{\F}^{-1}\{\frac 14+h\} = \{\frac
{1+h}2\}$, $\delta_{\F}^{-1}\{\frac 14-h\} = \{\frac 14-h\}$, and $\delta_{\F}^{-1}\{\frac 14\} = [\frac 14,\frac 12]$, which shows that no
function $\G\colon\I^2\to\I$ satisfying $\G(\bfx)\in\delta_{\F}^{-1}\{\F(\bfx)\}$ is continuous.
\end{example}

We now show that, if a continuous solution of Chisini's equation exists, then the following conditions must hold:
\begin{eqnarray}
d_{\F}^<(\bfx)=0 &\Rightarrow & a_{\F}(\bfx)\in\delta_{\F}^{-1}\{\F(\bfx)\},\qquad\forall\bfx\in\I^n,\label{eq:condNEW1}\\
d_{\F}^>(\bfx)=0 &\Rightarrow & b_{\F}(\bfx)\in\delta_{\F}^{-1}\{\F(\bfx)\},\qquad\forall\bfx\in\I^n.\label{eq:condNEW2}
\end{eqnarray}

\begin{lemma}\label{lemma:discNEW2}
Let $\F\colon\I^n\to\R$ be a nondecreasing function satisfying condition (\ref{eq:ranFrandF}) and let $\G\colon\I^n\to\I$ be any solution of
Chisini's equation (\ref{eq:FuncEqFG2}). Suppose that any of the conditions (\ref{eq:condNEW1}) and (\ref{eq:condNEW2}) are violated by some
$\bfx^*\in\I^n$. Then $\bfx^*$ is a discontinuity point of $\G$.
\end{lemma}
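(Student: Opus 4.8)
The plan is to mirror the discontinuity argument from the proof of Lemma~\ref{lemma:disc}: produce points converging to $\bfx^*$ along which $\G$ stays uniformly bounded away from $\G(\bfx^*)$. It suffices to treat the case where condition (\ref{eq:condNEW1}) is violated, i.e.\ where $d_{\F}^<(\bfx^*)=0$ but $a_{\F}(\bfx^*)\notin\delta_{\F}^{-1}\{\F(\bfx^*)\}$; the case of (\ref{eq:condNEW2}) is dual, obtained by replacing $a_{\F}$, $d_{\F}^<$, $\F_{<}^{-1}$, $\inf\I$, and ``$\leqslant$'' by $b_{\F}$, $d_{\F}^>$, $\F_{>}^{-1}$, $\sup\I$, and ``$\geqslant$''.

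First I would locate the value $\G(\bfx^*)$. Since $\G$ solves Chisini's equation, Proposition~\ref{prop:SolEqFDG} gives $\G(\bfx^*)\in\delta_{\F}^{-1}\{\F(\bfx^*)\}$, whence $\G(\bfx^*)\geqslant a_{\F}(\bfx^*)$. Because $a_{\F}(\bfx^*)=\inf\delta_{\F}^{-1}\{\F(\bfx^*)\}$ is assumed \emph{not} to lie in this level set, equality cannot hold, so $\G(\bfx^*)>a_{\F}(\bfx^*)$. I would also verify that $a_{\F}(\bfx^*)$ is finite: were it $-\infty$, then $a_{\F}(\bfx^*)=\inf\I\notin\I$, and the converse part of Lemma~\ref{lemma:dab} would force $d_{\F}^<(\bfx^*)=\infty$, contradicting the hypothesis $d_{\F}^<(\bfx^*)=0$. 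Hence the gap $\G(\bfx^*)-a_{\F}(\bfx^*)$ is a fixed positive real number.

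Next I would exploit $d_{\F}^<(\bfx^*)=0$, which says exactly that $\bfx^*$ lies in the closure of $\F_{<}^{-1}(\F(\bfx^*))$, to extract points $\bfx_k\in\I^n$ with $\F(\bfx_k)<\F(\bfx^*)$ and $d_{\infty}(\bfx_k,\bfx^*)\to 0$. The key monotonicity observation is that, since $\delta_{\F}$ is nondecreasing, $\F(\bfx_k)<\F(\bfx^*)$ forces every element of $\delta_{\F}^{-1}\{\F(\bfx_k)\}$ to lie at or below every element of $\delta_{\F}^{-1}\{\F(\bfx^*)\}$ (both sets being nonempty by condition (\ref{eq:ranFrandF})); taking the infimum over the latter set yields $\G(\bfx_k)\leqslant a_{\F}(\bfx^*)$ for every $k$. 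Combining, $\G(\bfx_k)\leqslant a_{\F}(\bfx^*)<\G(\bfx^*)$ while $\bfx_k\to\bfx^*$, so $\G$ is discontinuous at $\bfx^*$.

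I do not expect a serious obstacle: the construction is routine once the level-set comparison forced by the monotonicity of $\delta_{\F}$ is made explicit. The only point demanding care is excluding the degenerate possibility $a_{\F}(\bfx^*)=-\infty$, so that the gap is genuinely positive and finite; this is precisely where Lemma~\ref{lemma:dab} together with the hypothesis $d_{\F}^<(\bfx^*)=0$ is needed.
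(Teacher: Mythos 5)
Your proposal is correct and follows essentially the same route as the paper's proof: both exploit $d_{\F}^<(\bfx^*)=0$ to find points $\bfx'$ arbitrarily close to $\bfx^*$ with $\F(\bfx')<\F(\bfx^*)$, use the containment $\G(\bfx')\in\delta_{\F}^{-1}\{\F(\bfx')\}$ together with the monotonicity of $\delta_{\F}$ to get $\G(\bfx')\leqslant a_{\F}(\bfx^*)$, and use $a_{\F}(\bfx^*)\notin\delta_{\F}^{-1}\{\F(\bfx^*)\}$ to force the strict gap $a_{\F}(\bfx^*)<\G(\bfx^*)$. The only cosmetic differences are that you approach $\bfx^*$ along a sequence rather than along a segment $\bfx^*-h\mathbf{u}$, and you make explicit the finiteness of $a_{\F}(\bfx^*)$ via Lemma~\ref{lemma:dab}, which the paper leaves implicit.
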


\begin{proof}
Let $\G\colon\I^n\to\I$ be a solution of Chisini's equation (\ref{eq:FuncEqFG2}) and suppose that (\ref{eq:condNEW1}) is violated by
$\bfx^*\in\I^n$. The other case can be dealt with dually. We clearly have $\bfx^*\notin\mathrm{diag}(\I^n)$. Now, since $d_{\F}^<(\bfx^*)=0$,
there exists a unit vector $\mathbf{u}\in\R^n$, with nonnegative components, and a number $h^*>0$ such that $\F(\bfx^*-h\mathbf{u})<\F(\bfx^*)$
for all $h\in\left]0,h^*\right[$. Since $\G(\bfx)\in\delta_{\F}^{-1}\{\F(\bfx)\}$ for all $\bfx\in\I^n$, we must have
$\G(\bfx^*-h\mathbf{u})\leqslant a_{\F}(\bfx^*)< \G(\bfx^*)$ for all $h\in\left]0,h^*\right[$. If $\G$ were continuous at $\bfx^*$, then we
would have $\G(\bfx^*)=a_{\F}(\bfx^*)$. But then $(\delta_{\F}\circ\G)(\bfx^*)=(\delta_{\F}\circ a_{\F})(\bfx^*)\neq\F(\bfx^*)$, a
contradiction.
\end{proof}

In the next lemma, we give two further necessary conditions for the existence of a continuous solution, namely
\begin{eqnarray}
\lim_{h\to 0^-} b_{\F}(\bfx+h\mathbf{e}_i) ~\geqslant ~ a_{\F}(\bfx),&&\quad\forall\bfx\in\I^n,\,\forall i\in [n], \label{eq:Cond2C}\\
\lim_{h\to 0^+} a_{\F}(\bfx+h\mathbf{e}_i) ~\leqslant ~ b_{\F}(\bfx),&&\quad\forall\bfx\in\I^n,\,\forall i\in [n].\label{eq:Cond3C}
\end{eqnarray}

\begin{lemma}\label{lemma:Gdisc3}
Let $\F\colon\I^n\to\R$ be a nondecreasing function satisfying condition (\ref{eq:ranFrandF}) and let $\G\colon\I^n\to\I$ be any solution of
Chisini's equation (\ref{eq:FuncEqFG2}). Let $\bfx^*\in\I^n$ and assume there are $i\in [n]$ and $h<0$ (resp.\ $h>0$) such that
$\bfx^*+h\bfe_i\in\I^n$. If $\lim_{h\to 0^-}b_{\F}(\bfx^*+h\bfe_i)<a_{\F}(\bfx^*)$ (resp.\ $\lim_{h\to
0^+}a_{\F}(\bfx^*+h\bfe_i)>b_{\F}(\bfx^*)$) then $\lim_{h\to 0^-}\G(\bfx^*+h\bfe_i)<\G(\bfx^*)$ (resp.\ $\lim_{h\to
0^+}\G(\bfx^*+h\bfe_i)>\G(\bfx^*)$).
\end{lemma}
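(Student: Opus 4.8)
The plan is to exploit the sandwiching
$a_{\F}(\bfx)\leqslant\G(\bfx)\leqslant b_{\F}(\bfx)$, valid for every $\bfx\in\I^n$: since $\G$ solves the Chisini equation we have $\G(\bfx)\in\delta_{\F}^{-1}\{\F(\bfx)\}$ by (\ref{eq:GindFcF}) (equivalently Proposition~\ref{prop:SolEqFDG}(ii)), and $a_{\F}(\bfx)$, $b_{\F}(\bfx)$ are by definition the infimum and supremum of that very level set. I will treat the case $h<0$; the case $h>0$ is entirely dual.

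First I would record that $a_{\F}$ and $b_{\F}$ are nondecreasing on $\I^n$. This follows from the nondecreasing monotonicity of both $\F$ and $\delta_{\F}$: if $\bfx\leqslant\bfx'$ then $\F(\bfx)\leqslant\F(\bfx')$, and because $\delta_{\F}$ is nondecreasing its level sets $\delta_{\F}^{-1}\{y\}$ are intervals whose endpoints do not decrease as $y$ increases, whence $a_{\F}(\bfx)\leqslant a_{\F}(\bfx')$ and $b_{\F}(\bfx)\leqslant b_{\F}(\bfx')$. In particular, since $\I$ is an interval the hypothesis $\bfx^*+h_0\bfe_i\in\I^n$ for some $h_0<0$ gives $\bfx^*+h\bfe_i\in\I^n$ for all $h\in[h_0,0)$, and the one-variable map $h\mapsto b_{\F}(\bfx^*+h\bfe_i)$ is nondecreasing there. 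Hence its left limit $L:=\lim_{h\to 0^-}b_{\F}(\bfx^*+h\bfe_i)$ exists and satisfies $b_{\F}(\bfx^*+h\bfe_i)\leqslant L$ for every $h\in[h_0,0)$.

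Combining the sandwich with this bound and the hypothesis $L<a_{\F}(\bfx^*)$ gives, for all $h\in[h_0,0)$,
$$
\G(\bfx^*+h\bfe_i)\;\leqslant\;b_{\F}(\bfx^*+h\bfe_i)\;\leqslant\;L\;<\;a_{\F}(\bfx^*)\;\leqslant\;\G(\bfx^*).
$$
Passing to the upper limit as $h\to 0^-$ yields $\limsup_{h\to 0^-}\G(\bfx^*+h\bfe_i)\leqslant L<\G(\bfx^*)$, which is the asserted strict inequality (and shows that $\G$ cannot be continuous at $\bfx^*$ along the $i$th axis, the form in which the lemma will be used to obtain (\ref{eq:Cond2C})). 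The dual statement is obtained by reversing all inequalities and interchanging the roles of $a_{\F}$, $b_{\F}$ and of left and right limits, using the hypothesis $\lim_{h\to 0^+}a_{\F}(\bfx^*+h\bfe_i)>b_{\F}(\bfx^*)$.

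I do not foresee a serious obstacle. The only point needing care is the existence of the one-sided limit of $b_{\F}$ (resp.\ $a_{\F}$) together with the fact that it dominates (resp.\ is dominated by) the nearby values, and this is exactly what the monotonicity of $a_{\F}$ and $b_{\F}$ supplies. The conclusion is phrased with $\lim$, but since $\G$ is merely assumed to solve the equation and need not be monotone, the robust statement is the one about $\limsup$ (resp.\ $\liminf$); this is harmless for the intended application, where $\G$ is taken continuous and the inequality is used to reach a contradiction.
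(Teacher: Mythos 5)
Your proof is correct and follows essentially the same route as the paper's: both rest on the sandwich $\G(\bfx)\in[a_{\F}(\bfx),b_{\F}(\bfx)]$ coming from $\G(\bfx)\in\delta_{\F}^{-1}\{\F(\bfx)\}$, and then on the chain $\G(\bfx^*+h\bfe_i)\leqslant b_{\F}(\bfx^*+h\bfe_i)\leqslant a_{\F}(\bfx^*)-\varepsilon\leqslant\G(\bfx^*)-\varepsilon$ for $h$ near $0^-$ (the paper phrases the separation with an $\varepsilon$, you with the limit $L$ and the monotonicity of $b_{\F}$, which you rightly make explicit). Your remark that the conclusion is more honestly a $\limsup$ statement, since $\G$ need not be monotone, is a fair and harmless refinement.
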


\begin{proof}
We prove the statement related to the left-discontinuity of $\G$. The other one can be proved dually. Under the assumptions of the lemma, there
exist $h^*<0$ and $\varepsilon >0$ such that $b_{\F}(\bfx^*+h\bfe_i)\leqslant a_{\F}(\bfx^*)-\varepsilon$ for all $h\in\left]h^*,0\right[$. It
follows that $\G(\bfx^*+h\bfe_i)\leqslant b_{\F}(\bfx^*+h\bfe_i)\leqslant a_{\F}(\bfx^*)-\varepsilon\leqslant\G(\bfx^*)-\varepsilon$ for all
$h\in\left]h^*,0\right[$, which proves the result.
\end{proof}

The converse of Lemma~\ref{lemma:Gdisc3} does not hold in general. Indeed, when $\F$ is a constant function, any function $\G\colon\I^n\to\I$
(continuous or not) solves the corresponding Chisini equation. However, we now show that, assuming conditions (\ref{eq:Cond1C}),
(\ref{eq:condNEW1}), and (\ref{eq:condNEW2}), the converse of Lemma~\ref{lemma:Gdisc3} holds for the special solution $\M_{\F}$.

\begin{lemma}\label{lemma:Gdisc4}
Let $\F\colon\I^n\to\R$ be a nondecreasing function satisfying conditions (\ref{eq:ranFrandF}), (\ref{eq:Cond1C}), (\ref{eq:condNEW1}), and
(\ref{eq:condNEW2}). Let $\bfx^*\in\I^n$ and assume there are $i\in [n]$ and $h<0$ (resp.\ $h>0$) such that $\bfx^*+h\bfe_i\in\I^n$. We have
$\lim_{h\to 0^-}b_{\F}(\bfx^*+h\bfe_i)<a_{\F}(\bfx^*)$ (resp.\ $\lim_{h\to 0^+}a_{\F}(\bfx^*+h\bfe_i)>b_{\F}(\bfx^*)$) if and only if
$\lim_{h\to 0^-}\M_{\F}(\bfx^*+h\bfe_i)<\M_{\F}(\bfx^*)$ (resp.\ $\lim_{h\to 0^+}\M_{\F}(\bfx^*+h\bfe_i)>\M_{\F}(\bfx^*)$).
\end{lemma}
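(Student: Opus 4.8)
The forward implication is free: since $\M_{\F}$ is a solution of Chisini's equation by Theorem~\ref{thm:GF-ND-Id}, Lemma~\ref{lemma:Gdisc3} applied to $\G=\M_{\F}$ gives exactly that $\lim_{h\to 0^-}b_{\F}(\bfx^*+h\bfe_i)<a_{\F}(\bfx^*)$ forces $\lim_{h\to 0^-}\M_{\F}(\bfx^*+h\bfe_i)<\M_{\F}(\bfx^*)$ (and dually on the right). So the whole content is the converse, which I would prove by contraposition: assuming $\lim_{h\to 0^-}b_{\F}(\bfx^*+h\bfe_i)\geqslant a_{\F}(\bfx^*)$, I would show that $\M_{\F}$ is left-continuous at $\bfx^*$ in the $i$th direction. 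Because $\M_{\F}$ is nondecreasing (Theorem~\ref{thm:GF-ND-Id}), the one-sided limit $\lim_{h\to 0^-}\M_{\F}(\bfx^*+h\bfe_i)$ exists and is $\leqslant\M_{\F}(\bfx^*)$; hence it suffices to rule out a strict drop.

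First I would record two simplifications valid under the present hypotheses. Conditions (\ref{eq:condNEW1})--(\ref{eq:condNEW2}) preclude the situations (\ref{eq:Case1N})--(\ref{eq:Case2N}): if some $\bfx\in\F^{-1}\{\F(\bfx^*)\}\cap\Omega_{\F}$ had $d_{\F}^<(\bfx)=0$, then (\ref{eq:condNEW1}) would give $a_{\F}(\bfx^*)=a_{\F}(\bfx)\in\delta_{\F}^{-1}\{\F(\bfx^*)\}$, contradicting (\ref{eq:Case1N}); so no tilded modification occurs, $\M_{\F}$ equals $\U_{\F}$ on $\Omega_{\F}$ and, by (\ref{eq:Cond1C}), equals the common value $a_{\F}=b_{\F}$ off $\Omega_{\F}$. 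Second, since $\F$ is nondecreasing the Chebyshev distances collapse to the diagonal, $d_{\F}^<(\bfx)=\inf\{r\geqslant 0:\F(\bfx-r\mathbf{1})<\F(\bfx)\}$ and $d_{\F}^>(\bfx)=\inf\{r\geqslant 0:\F(\bfx+r\mathbf{1})>\F(\bfx)\}$, because $\|\bfx-\bfx'\|_\infty\leqslant r$ forces $\bfx-r\mathbf{1}\leqslant\bfx'\leqslant\bfx+r\mathbf{1}$ (this is the very point of the Chebyshev metric, cf.\ Remark~\ref{rem:GF-ND-Id}$(ii)$). Writing $\bfx^h:=\bfx^*+h\bfe_i$, this yields $d_{\F}^>(\bfx^h)\leqslant|h|$ whenever $\F(\bfx^h)<\F(\bfx^*)$, since then $\F(\bfx^h+|h|\mathbf{1})\geqslant\F(\bfx^*)>\F(\bfx^h)$.

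I would then split according to the behaviour of $\F$ along the segment. If $\F(\bfx^h)=\F(\bfx^*)$ for all $h$ near $0^-$, then $a_{\F},b_{\F}$ are constant along the segment and the level set $\F^{-1}_<(\F(\bfx^*))$ is fixed; hence $d_{\F}^<$ and $d_{\F}^>$ are $1$-Lipschitz there, $\U_{\F}$ is continuous wherever $d_{\F}^<+d_{\F}^>>0$, and left-continuity of $\M_{\F}$ follows (the sub-case $a_{\F}=b_{\F}$ being trivial). The substantial case is $\F(\bfx^h)<\F(\bfx^*)$ for all small $h<0$. Here the level sets are ordered, so $b_{\F}(\bfx^h)=\sup\delta_{\F}^{-1}\{\F(\bfx^h)\}\leqslant a_{\F}(\bfx^*)$; with the standing assumption $\lim_{h\to0^-}b_{\F}(\bfx^h)\geqslant a_{\F}(\bfx^*)$ this forces $\lim_{h\to 0^-}b_{\F}(\bfx^h)=a_{\F}(\bfx^*)$. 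Moreover $d_{\F}^<(\bfx^*)=0$, so (\ref{eq:condNEW1}) gives $a_{\F}(\bfx^*)\in\delta_{\F}^{-1}\{\F(\bfx^*)\}$ and one checks $\M_{\F}(\bfx^*)=a_{\F}(\bfx^*)$ in either position of $\bfx^*$ relative to $\Omega_{\F}$. Thus the goal becomes $\lim_{h\to0^-}\M_{\F}(\bfx^h)=a_{\F}(\bfx^*)$, and writing $\U_{\F}(\bfx^h)=b_{\F}(\bfx^h)-\frac{d_{\F}^>(\bfx^h)}{d_{\F}^<(\bfx^h)+d_{\F}^>(\bfx^h)}\,\big(b_{\F}(\bfx^h)-a_{\F}(\bfx^h)\big)$, since $b_{\F}(\bfx^h)\to a_{\F}(\bfx^*)$ it suffices to show the subtracted term tends to $0$.

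The crux---and the step I expect to be hardest---is therefore to bound $d_{\F}^<(\bfx^h)$ away from $0$ while $d_{\F}^>(\bfx^h)\to 0$. I would establish $\liminf_{h\to0^-}d_{\F}^<(\bfx^h)>0$ through the diagonal formula, the key sub-claim being that the diagonal left-limit $\lim_{r\to0^+}\F(\bfx^*-r\mathbf{1})$ coincides with $L:=\lim_{h\to0^-}\F(\bfx^h)$; this is a ``no-pinch'' statement that the hypotheses are designed to guarantee, since when it fails a value of $\F$ is omitted by $\delta_{\F}$, contradicting (\ref{eq:ranFrandF}). Granting it, there is $\rho>0$ with $\F(\bfx^*-r\mathbf{1})\geqslant L$ for $0<r<\rho$, whence from $\bfx^h-r\mathbf{1}\geqslant\bfx^*-(r+|h|)\mathbf{1}$ and monotonicity $\F(\bfx^h-r\mathbf{1})\geqslant\F(\bfx^*-(r+|h|)\mathbf{1})\geqslant L>\F(\bfx^h)$ for $r<\rho-|h|$, giving $d_{\F}^<(\bfx^h)\geqslant\rho-|h|\to\rho>0$. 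Then the ratio $d_{\F}^>(\bfx^h)/(d_{\F}^<(\bfx^h)+d_{\F}^>(\bfx^h))\to 0$, the subtracted term vanishes, and $\M_{\F}(\bfx^h)\to a_{\F}(\bfx^*)=\M_{\F}(\bfx^*)$, as required. I would finally dispatch the boundary/limiting regimes (where some $d_{\F}^{\lessgtr}(\bfx^h)$ is infinite, e.g.\ when $\F(\bfx^h)$ sits at the bottom of $\ran(\F)$) by the corresponding limiting-value definitions of $\U_{\F}$, in which $\M_{\F}(\bfx^h)=b_{\F}(\bfx^h)-d_{\F}^>(\bfx^h)\to a_{\F}(\bfx^*)$ directly from $d_{\F}^>(\bfx^h)\leqslant|h|$. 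The right-hand statement is proved dually.
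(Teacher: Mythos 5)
The first two thirds of your argument track the paper closely and are sound: the forward direction via Lemma~\ref{lemma:Gdisc3}, the observation that (\ref{eq:condNEW1})--(\ref{eq:condNEW2}) rule out the modifications (\ref{eq:Case1N})--(\ref{eq:Case2N}), the exhaustive split according to whether $\F$ is eventually constant equal to $\F(\bfx^*)$ on the segment, and the reduction of the hard case to showing $\lim_{h\to 0^-}\U_{\F}(\bfx^*+h\bfe_i)=a_{\F}(\bfx^*)$ given $b_{\F}(\bfx^*+h\bfe_i)\to a_{\F}(\bfx^*)$ and $d_{\F}^>(\bfx^*+h\bfe_i)\leqslant|h|$. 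The gap is exactly at the step you flag as the crux: the ``no-pinch'' sub-claim that $\lim_{r\to 0^+}\F(\bfx^*-r\mathbf{1})$ equals $L=\lim_{h\to 0^-}\F(\bfx^*+h\bfe_i)$ is not a consequence of (\ref{eq:ranFrandF}). When the two limits differ, the values in the gap are omitted by $\delta_{\F}$ \emph{and} by $\F$ simultaneously, so no discrepancy between $\ran(\delta_{\F})$ and $\ran(\F)$ arises. Concretely, take $\I=[0,1]$, $n=2$, and $\F(x_1,x_2)=1$ if $x_1\geqslant\frac12$ and $x_2\geqslant\frac34$, $\F(x_1,x_2)=0$ if $x_1<\frac12$ and $x_2<\frac34$, and $\F(x_1,x_2)=\frac12$ otherwise. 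Then $\delta_{\F}^{-1}\{0\}=[0,\frac12[$, $\delta_{\F}^{-1}\{\frac12\}=[\frac12,\frac34[$, $\delta_{\F}^{-1}\{1\}=[\frac34,1]$, and one checks that (\ref{eq:ranFrandF}) holds, that $\Omega_{\F}=\I^n$ (so (\ref{eq:Cond1C}) is vacuous), that each level set of $\delta_{\F}$ contains its infimum (so (\ref{eq:condNEW1}) holds), and that $d_{\F}^>$ is nowhere zero (so (\ref{eq:condNEW2}) holds). At $\bfx^*=(\frac12,\frac34)$ with $i=1$ one has $L=\frac12$ but $\F(\bfx^*-r\mathbf{1})=0$ for every $r>0$, so your sub-claim fails; worse, $d_{\F}^<(\bfx^*+h\bfe_1)=0$ for all $h<0$ (points of value $0$ accumulate at $(\frac12+h,\frac34)$ from below in the second coordinate), so $\liminf_{h\to 0^-}d_{\F}^<(\bfx^*+h\bfe_1)=0$ and the ratio $d_{\F}^>/(d_{\F}^<+d_{\F}^>)$ does not tend to $0$. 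In this configuration $\U_{\F}(\bfx^*+h\bfe_1)=a_{\F}(\bfx^*+h\bfe_1)=\frac12$ for all $h<0$ while $\M_{\F}(\bfx^*)=\frac34$ and $\lim_{h\to 0^-}b_{\F}(\bfx^*+h\bfe_1)=\frac34=a_{\F}(\bfx^*)$, so the quantity you need to vanish does not vanish.

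You should be aware that you have put your finger on precisely the point the paper itself treats briskly: in case (i)(b) of the paper's proof, from $\lim_{h\to 0^-}d_{\F}^>(\bfx^*+h\bfe_i)=0$ alone it is asserted that $\lim_{h\to 0^-}\M_{\F}(\bfx^*+h\bfe_i)=\lim_{h\to 0^-}b_{\F}(\bfx^*+h\bfe_i)$, which likewise presupposes that $d_{\F}^<(\bfx^*+h\bfe_i)$ does not also tend to $0$ at a comparable rate. So your decomposition and your identification of the missing estimate are both correct; what is missing is a genuine argument (necessarily using more than (\ref{eq:ranFrandF}), and apparently more than the four stated hypotheses, since the example above satisfies all of them) that excludes the simultaneous collapse $d_{\F}^<\to 0$, $d_{\F}^>\to 0$ with $b_{\F}-a_{\F}$ bounded away from $0$ along the segment. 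As written, the proposal does not close this gap, and the (\ref{eq:ranFrandF})-based justification offered for the no-pinch claim is incorrect.
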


\begin{proof}
Again, we prove the left-discontinuity result. The other one can be proved dually. The necessity immediately follows from
Lemma~\ref{lemma:Gdisc3}. Let us prove the sufficiency. For the sake of a contradiction, suppose that
\begin{equation}\label{eq:Gdisc2}
\lim_{h\to 0^-}\M_{\F}(\bfx^*+h\bfe_i)<\M_{\F}(\bfx^*)\quad\mbox{and}\quad\lim_{h\to 0^-}b_{\F}(\bfx^*+h\bfe_i)\geqslant a_{\F}(\bfx^*).
\end{equation}
Due to (\ref{eq:condNEW1}) and (\ref{eq:condNEW2}), both conditions (\ref{eq:Case1N}) and (\ref{eq:Case2N}) fail to hold and hence $\M_{\F}$ is
given by (\ref{eq:Urysohn}). Two exclusive cases are to be examined:
\begin{enumerate}
\item[(i)] If $\bfx^*\in\Omega_{\F}$ then $\M_{\F}(\bfx^*)=\U_{\F}(\bfx^*)$.
\begin{enumerate}
\item[(a)] Suppose that there exists $h^*<0$ such that $\bfx^*+h\mathbf{e}_i\in\Omega_{\F}\cap\F^{-1}\{\F(\bfx^*)\}$ for all
$h\in\left]h^*,0\right[$. Then $\M_{\F}=\U_{\F}$ on the half-closed line segment $\left]\bfx^*+h^*\mathbf{e}_i,\bfx^*\right]$. This contradicts
(\ref{eq:Gdisc2}) since $\U_{\F}$ is continuous on each level set $\Omega_{\F}\cap\F^{-1}\{y\}$, with $y\in\ran(\delta_{\F})$.

\item[(b)] Suppose that there exists $h^*<0$ such that $\bfx^*+h\mathbf{e}_i\in\Omega_{\F}\cap\F_{<}^{-1}(\F(\bfx^*))$ for all
$h\in\left]h^*,0\right[$. Then there exists $h'\in\left]h^*,0\right[$ such that $\F$ is constant on the open line segment
$\left]\bfx^*+h'\mathbf{e}_i,\bfx^*\right[$ (otherwise $\bfx^*+h\mathbf{e}_i\notin\Omega_{\F}$). Therefore, $\lim_{h\to
0^-}\F(\bfx^*+h\mathbf{e}_i)<\F(\bfx^*)$ and hence $\lim_{h\to 0^-}d^>_{\F}(\bfx^*+h\mathbf{e}_i)=0$. This implies $\lim_{h\to
0^-}\M_{\F}(\bfx^*+h\bfe_i) = \lim_{h\to 0^-}b_{\F}(\bfx^*+h\bfe_i)$. However, we also have $d_{\F}^<(\bfx^*)=0$ and hence
$\M_{\F}(\bfx^*)=a_{\F}(\bfx^*)$, thus contradicting (\ref{eq:Gdisc2}).

\item[(c)] Suppose that there exists $h^*<0$ such that $\bfx^*+h\mathbf{e}_i\in \I^n\setminus\Omega_{\F}$ for all $h\in\left]h^*,0\right[$. Then
$\M_{\F}(\bfx^*+h\bfe_i)=b_{\F}(\bfx^*+h\bfe_i)$ for all $h\in\left]h^*,0\right[$ and we conclude as in case (b) above.

\end{enumerate}
\item[(ii)] If $\bfx^*\in\I^n\setminus\Omega_{\F}$ then $\M_{\F}(\bfx^*)=a_{\F}(\bfx^*)=b_{\F}(\bfx^*)$ (cf.\ condition (\ref{eq:Cond1C})).
\begin{enumerate}
\item[(a)] Suppose that there exists $h^*<0$ such that $\bfx^*+h\mathbf{e}_i\in\Omega_{\F}$ for all $h\in\left]h^*,0\right[$. Then there exists
$h'\in\left]h^*,0\right[$ such that $\F$ is constant on the line segment $\left]\bfx^*+h'\mathbf{e}_i,\bfx^*\right[$. It follows that
$\lim_{h\to 0^-}d^>_{\F}(\bfx^*+h\mathbf{e}_i)=0$ and we conclude as in case (b) above.

\item[(b)] Suppose that there exists $h^*<0$ such that $\bfx^*+h\mathbf{e}_i\in\I^n\setminus\Omega_{\F}$ for all $h\in\left]h^*,0\right[$. Then
$\M_{\F}=a_{\F}=b_{\F}$ on the half-closed line segment $\left]\bfx^*+h^*\mathbf{e}_i,\bfx^*\right]$ and this contradicts
(\ref{eq:Gdisc2}).\qedhere
\end{enumerate}
\end{enumerate}
\end{proof}

We now state our main result related to the existence of continuous solutions. We first recall an important result on nondecreasing functions.
For a detailed proof, see e.g.\ \cite[Chapter 2]{GraMarMesPap09}.

\begin{proposition}\label{prop:ND-ContC}
A nondecreasing function of $n$ variables is continuous if and only if it is continuous in each of its variables.
\end{proposition}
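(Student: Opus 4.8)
The plan is to treat the two directions separately, with all the work in the ``if'' direction. The ``only if'' part is immediate and requires no monotonicity: if $\F$ is jointly continuous then its restriction to any axis-parallel line is continuous, which is exactly continuity in each variable. For the converse I would assume $\F\colon\I^n\to\R$ is nondecreasing and continuous in each variable separately and prove joint continuity at an arbitrary point $\bfx^0\in\I^n$. The governing idea is a \emph{squeeze}: monotonicity lets me trap the value of $\F$ at any nearby point between its values at two diagonal corners, so that joint continuity reduces to the continuity of two single-variable maps.

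Concretely, I would first record the reduction. If $\|\bfx-\bfx^0\|_\infty<\delta$ then $\bfx^0-\delta\mathbf{1}\leqslant\bfx\leqslant\bfx^0+\delta\mathbf{1}$ componentwise, so by nondecreasing monotonicity $\F(\bfx^0-\delta\mathbf{1})\leqslant\F(\bfx)\leqslant\F(\bfx^0+\delta\mathbf{1})$. Hence it suffices to show that the one-sided diagonal limits $\F(\bfx^0+\delta\mathbf{1})$ and $\F(\bfx^0-\delta\mathbf{1})$ both converge to $\F(\bfx^0)$ as $\delta\to 0^+$; the squeeze then delivers $|\F(\bfx)-\F(\bfx^0)|<\varepsilon$ on a full $d_\infty$-ball.

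The heart of the argument is a \textbf{staircase/telescoping} construction establishing $\F(\bfx^0+\delta\mathbf{1})\to\F(\bfx^0)$. Fix $\varepsilon>0$. I build the diagonal displacement one coordinate at a time, at each stage invoking continuity in a single variable \emph{at the already-shifted base point}. By continuity in the first variable at $\bfx^0$ choose $\delta_1>0$ with $\F(x_1^0+\delta_1,x_2^0,\ldots,x_n^0)-\F(\bfx^0)<\varepsilon/n$; by continuity in the second variable at the new point choose $\delta_2\in(0,\delta_1]$ making the next increment $<\varepsilon/n$; and so on, choosing $\delta_k\in(0,\delta_{k-1}]$ so that shifting the $k$th coordinate from $x_k^0$ to $x_k^0+\delta_k$ (with the first $k-1$ coordinates already shifted by $\delta_1,\ldots,\delta_{k-1}$ and the rest at $x_j^0$) raises $\F$ by less than $\varepsilon/n$. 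The telescoping sum of these $n$ increments gives $\F(x_1^0+\delta_1,\ldots,x_n^0+\delta_n)-\F(\bfx^0)<\varepsilon$. Setting $\delta:=\delta_n\leqslant\delta_k$ for all $k$ and using monotonicity once more, $\F(\bfx^0)\leqslant\F(\bfx^0+\delta\mathbf{1})\leqslant\F(x_1^0+\delta_1,\ldots,x_n^0+\delta_n)<\F(\bfx^0)+\varepsilon$, which is the desired right-continuity along the diagonal. The left-hand statement $\F(\bfx^0-\delta\mathbf{1})\to\F(\bfx^0)$ follows dually.

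The main obstacle is exactly what makes this proposition non-trivial: separate continuity alone is famously insufficient for joint continuity, so the argument must genuinely exploit monotonicity. The delicate points are (a) organizing the nested choices $\delta_n\leqslant\cdots\leqslant\delta_1$ so that a \emph{single} radius $\delta$ simultaneously controls all coordinates via the squeeze, and (b) handling boundary points of $\I^n$, where a coordinate $x_i^0$ may be an endpoint of $\I$: there the relevant separate continuity is only one-sided, but the corresponding coordinate of any nearby $\bfx\in\I^n$ cannot move past the endpoint, so that coordinate simply contributes nothing to the telescoping and the squeeze goes through with the offending corner capped inside $\I^n$. With these two bookkeeping issues dispatched, the squeeze completes the proof.
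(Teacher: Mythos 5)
Your proof is correct. The paper does not prove Proposition~\ref{prop:ND-ContC} itself but defers to \cite[Chapter 2]{GraMarMesPap09}; your squeeze of $\F(\bfx)$ between the diagonal corners $\F(\bfx^0-\delta\mathbf{1})$ and $\F(\bfx^0+\delta\mathbf{1})$, combined with the coordinate-by-coordinate staircase with nested radii $\delta_n\leqslant\cdots\leqslant\delta_1$ and the capping of endpoint coordinates, is exactly the standard argument for this fact, so there is nothing to compare against in the paper and no gap to report.
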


\begin{theorem}\label{thm:MainCont}
Let $\F\colon\I^n\to\R$ be a nondecreasing function satisfying condition (\ref{eq:ranFrandF}). Then the following assertions are equivalent:
\begin{enumerate}
\item[(i)] There exists a continuous solution of Chisini's equation (\ref{eq:FuncEqFG2}).

\item[(ii)] $\M_{\F}$ is a continuous solution of Chisini's equation (\ref{eq:FuncEqFG2}).

\item[(iii)] $\F$ satisfies conditions (\ref{eq:Cond1C}), (\ref{eq:condNEW1}), (\ref{eq:condNEW2}), (\ref{eq:Cond2C}), and (\ref{eq:Cond3C}).
\end{enumerate}
\end{theorem}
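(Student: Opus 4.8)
The plan is to establish the cycle of implications $(ii)\Rightarrow(i)\Rightarrow(iii)\Rightarrow(ii)$, leaning on the five preceding lemmas, each of which already isolates one of the five conditions appearing in $(iii)$. The implication $(ii)\Rightarrow(i)$ is immediate: by Theorem~\ref{thm:GF-ND-Id} the function $\M_{\F}$ is always a nondecreasing and idempotent solution of Chisini's equation, so if it happens to be continuous then a continuous solution certainly exists.

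For $(i)\Rightarrow(iii)$ I would argue by contraposition, using that each of Lemmas~\ref{lemma:disc}, \ref{lemma:discNEW2}, and \ref{lemma:Gdisc3} forces a discontinuity in \emph{every} solution $\G$ whenever one of the five conditions fails. Concretely, suppose a continuous solution $\G$ exists. If (\ref{eq:Cond1C}) were violated, then there would be $\bfx^*\in\I^n\setminus\Omega_{\F}$ with $a_{\F}(\bfx^*)<b_{\F}(\bfx^*)$, and Lemma~\ref{lemma:disc} would make $\bfx^*$ a discontinuity point of $\G$; if either (\ref{eq:condNEW1}) or (\ref{eq:condNEW2}) were violated, Lemma~\ref{lemma:discNEW2} would do the same; and if either (\ref{eq:Cond2C}) or (\ref{eq:Cond3C}) were violated, Lemma~\ref{lemma:Gdisc3} would produce a strict one-sided jump of $\G$. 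In each case we contradict the continuity of $\G$, so all five conditions must hold.

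The substance of the theorem lies in $(iii)\Rightarrow(ii)$, where the heavy lifting has already been carried out in Lemma~\ref{lemma:Gdisc4}. Assuming all five conditions, $\M_{\F}$ is a nondecreasing idempotent solution by Theorem~\ref{thm:GF-ND-Id}, so by Proposition~\ref{prop:ND-ContC} it suffices to verify continuity of $\M_{\F}$ in each variable separately. Fix $i\in[n]$ and $\bfx^*\in\I^n$. Since $\M_{\F}$ is nondecreasing, its one-sided limits along $\bfe_i$ exist wherever the corresponding one-sided neighbourhood lies in $\I^n$, and monotonicity gives $\lim_{h\to 0^-}\M_{\F}(\bfx^*+h\bfe_i)\leqslant\M_{\F}(\bfx^*)\leqslant\lim_{h\to 0^+}\M_{\F}(\bfx^*+h\bfe_i)$. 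Conditions (\ref{eq:ranFrandF}), (\ref{eq:Cond1C}), (\ref{eq:condNEW1}), and (\ref{eq:condNEW2}) are exactly the hypotheses of Lemma~\ref{lemma:Gdisc4}, whose biconditional I would invoke in contrapositive form: since (\ref{eq:Cond2C}) asserts $\lim_{h\to 0^-}b_{\F}(\bfx^*+h\bfe_i)\geqslant a_{\F}(\bfx^*)$, the lemma rules out a strict left jump, forcing $\lim_{h\to 0^-}\M_{\F}(\bfx^*+h\bfe_i)=\M_{\F}(\bfx^*)$; symmetrically, (\ref{eq:Cond3C}) rules out a right jump. Thus $\M_{\F}$ is both left- and right-continuous in the $i$th variable at $\bfx^*$, hence continuous in each variable, hence continuous.

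The main obstacle is not any single new estimate but the careful bookkeeping of one-sided limits at boundary points: the lemmas are phrased conditionally on $\bfx^*+h\bfe_i\in\I^n$, so I would be explicit that continuity in the $i$th variable only requires checking those directions that stay inside $\I^n$, and that the monotone structure of $\M_{\F}$ guarantees the relevant one-sided limits actually exist before Lemma~\ref{lemma:Gdisc4} can be applied. Once these points are in place, the translation from the five analytic conditions on $a_{\F}$ and $b_{\F}$ to continuity of $\M_{\F}$ via Proposition~\ref{prop:ND-ContC} is mechanical.
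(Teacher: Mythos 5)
Your proposal is correct and follows essentially the same route as the paper: $(ii)\Rightarrow(i)$ trivially, $(i)\Rightarrow(iii)$ via Lemmas~\ref{lemma:disc}, \ref{lemma:discNEW2}, and \ref{lemma:Gdisc3}, and $(iii)\Rightarrow(ii)$ by combining Theorem~\ref{thm:GF-ND-Id}, Proposition~\ref{prop:ND-ContC}, and the biconditional of Lemma~\ref{lemma:Gdisc4}. Your added care about one-sided limits at boundary points and the existence of monotone limits is a reasonable elaboration of details the paper leaves implicit.
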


\begin{proof}
The implication $(ii)\Rightarrow (i)$ is immediate. The implication $(i)\Rightarrow (iii)$ follows from Lemmas~\ref{lemma:disc},
\ref{lemma:discNEW2}, and \ref{lemma:Gdisc3}. To complete the proof, it remains to show that $(iii)\Rightarrow (ii)$. By
Theorem~\ref{thm:GF-ND-Id}, $\M_{\F}$ is a nondecreasing solution of Chisini's equation. Hence, by Proposition~\ref{prop:ND-ContC}, it suffices
to show that $\M_{\F}$ is continuous in each variable, which follows immediately from Lemma~\ref{lemma:Gdisc4}.
\end{proof}

\begin{remark}\label{rem:Cont-W1E}
\begin{enumerate}
\item[(i)] Theorem~\ref{thm:MainCont} provides necessary and sufficient conditions on a nondecreasing function $\F\colon\I^n\to\R$ satisfying
condition (\ref{eq:ranFrandF}) for its associated Chisini equation to have continuous solutions. When these conditions are satisfied, then the
function $\M_{\F}$ is a nondecreasing, idempotent, and continuous solution.

\item[(ii)] The following examples show that the conditions mentioned in assertion (iii) of Theorem~\ref{thm:MainCont} are independent:
\begin{enumerate}
\item[(a)] The function $\F$ in Example~\ref{ex:FcontGnotcont} satisfies all but condition (\ref{eq:Cond1C}).

\item[(b)] Consider an idempotent and noncontinuous function $\F$. Then conditions (\ref{eq:Cond1C}), (\ref{eq:condNEW1}), and
(\ref{eq:condNEW2}) are clearly satisfied but $\M_{\F}=\F$ is noncontinuous, which shows that (\ref{eq:Cond2C}) or (\ref{eq:Cond3C}) fails.

\item[(c)] The example given in Remark~\ref{rem:s9d8f7} (i) satisfies all but condition (\ref{eq:condNEW1}) and a dual example would make
(\ref{eq:condNEW2}) fail.
\end{enumerate}
\end{enumerate}
\end{remark}

The following two corollaries particularize Theorem~\ref{thm:MainCont} to the cases when $\delta_{\F}$ is continuous and when $\F$ is
continuous. As already observed in Example~\ref{ex:FcontGnotcont2}, continuity of $\F$ does not ensure the existence of continuous solutions.
These corollaries show that condition (\ref{eq:Cond1C}) remains the key property of $\F$ to ensure the existence of continuous solutions.

We first consider a lemma.

\begin{lemma}\label{lemma:NegDisc}
Let $\F\colon\I^n\to\R$ be a nondecreasing and continuous function. Then the function $a_{\F}$ (resp.\ $b_{\F}$) is left-continuous (resp.\
right-continuous) in each variable.
\end{lemma}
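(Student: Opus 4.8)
The plan is to prove that $a_{\F}$ is left-continuous in each variable (the statement for $b_{\F}$ follows dually by an order-reversing argument, exactly as in the dualization remarks surrounding Proposition~\ref{prop:dualization}). Fix $\bfx\in\I^n$ and an index $i\in[n]$, and consider the one-variable function $h\mapsto a_{\F}(\bfx+h\bfe_i)$ for $h<0$ with $\bfx+h\bfe_i\in\I^n$. First I would observe that since $\F$ is nondecreasing, the function $h\mapsto\F(\bfx+h\bfe_i)$ is nondecreasing in $h$, and therefore so is $h\mapsto a_{\F}(\bfx+h\bfe_i)$, because $a_{\F}(\bfx)=\inf\delta_{\F}^{-1}\{\F(\bfx)\}$ is a nondecreasing function of the value $\F(\bfx)$ (larger level corresponds to a level set of $\delta_{\F}$ sitting at least as far to the right, since $\delta_{\F}$ is nondecreasing). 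Consequently the left limit $L:=\lim_{h\to 0^-}a_{\F}(\bfx+h\bfe_i)$ exists and satisfies $L\leqslant a_{\F}(\bfx)$; the whole task reduces to ruling out the strict inequality $L<a_{\F}(\bfx)$.

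Next I would exploit continuity of $\F$ to transfer a jump in $a_{\F}$ back to a contradiction with the definition of the infimum. Suppose $L<a_{\F}(\bfx)$. Pick a real $t$ with $L<t<a_{\F}(\bfx)$. For every sufficiently small $h<0$ we have $a_{\F}(\bfx+h\bfe_i)\leqslant L<t$, so by definition of $a_{\F}$ as an infimum there is a point $z_h\in\delta_{\F}^{-1}\{\F(\bfx+h\bfe_i)\}$ with $z_h<t$, i.e.\ $\delta_{\F}(z_h)=\F(\bfx+h\bfe_i)$ with $z_h<t$. Since $\delta_{\F}$ is nondecreasing this forces $\F(\bfx+h\bfe_i)=\delta_{\F}(z_h)\leqslant\delta_{\F}(t^-)$ in an appropriate sense; more cleanly, I would argue that $\delta_{\F}(t)\geqslant\F(\bfx+h\bfe_i)$ whenever the level set meets $(-\infty,t)$, giving a uniform lower-level obstruction. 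The key point is that $a_{\F}(\bfx)>t$ means $\delta_{\F}(s)<\F(\bfx)$ for all $s<a_{\F}(\bfx)$, in particular $\delta_{\F}(t)<\F(\bfx)$.

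The crux is then to combine this with continuity of $\F$ along the $i$th coordinate. By continuity, $\F(\bfx+h\bfe_i)\to\F(\bfx)$ as $h\to 0^-$. But for each small $h<0$ the existence of $z_h<t$ in the level set $\delta_{\F}^{-1}\{\F(\bfx+h\bfe_i)\}$ yields $\F(\bfx+h\bfe_i)=\delta_{\F}(z_h)\leqslant\delta_{\F}(t)<\F(\bfx)$, using monotonicity of $\delta_{\F}$ and $z_h<t$. Letting $h\to 0^-$ gives $\F(\bfx)=\lim_{h\to 0^-}\F(\bfx+h\bfe_i)\leqslant\delta_{\F}(t)<\F(\bfx)$, a contradiction. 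Hence $L=a_{\F}(\bfx)$, establishing left-continuity in the $i$th variable.

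I expect the main obstacle to be the careful handling of infima and of the inequality $z_h<t$: one must ensure that a value strictly below $t$ is genuinely attained in each level set (this is where the infimum definition of $a_{\F}$ and the nondecreasing monotonicity of $\delta_{\F}$ are used together), and one must avoid edge cases where $a_{\F}(\bfx)=\inf\I$ or the level sets degenerate. These boundary situations are benign because the inequality $\delta_{\F}(t)<\F(\bfx)$ follows directly from $t<a_{\F}(\bfx)=\inf\delta_{\F}^{-1}\{\F(\bfx)\}$ and monotonicity of $\delta_{\F}$, independently of whether the infimum is attained; the continuity of $\F$ then does all the remaining work. The dual statement for $b_{\F}$ is obtained verbatim after replacing $\inf$ by $\sup$, left by right, and reversing the relevant inequalities.
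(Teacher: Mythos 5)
Your proof is correct and follows essentially the same route as the paper's: assume a left jump of $a_{\F}$ in the $i$th variable, use monotonicity of $\delta_{\F}$ to bound $\F(\bfx+h\bfe_i)$ by $\delta_{\F}(t)$ for some $t$ strictly below $a_{\F}(\bfx)$, and let continuity of $\F$ force $\F(\bfx)\leqslant\delta_{\F}(t)<\F(\bfx)$, contradicting the definition of $a_{\F}(\bfx)$ as an infimum. Your use of an explicit witness $z_h<t$ in the level set is in fact slightly cleaner than the paper's step $\F(\bfx^*+h\bfe_i)=\delta_{\F}(a_{\F}(\bfx^*+h\bfe_i))$, which tacitly assumes the infimum is attained.
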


\begin{proof}
Let us establish the result for $a_{\F}$ only. The other function can be dealt with similarly. Let $i\in [n]$ and, for the sake of
contradiction, suppose that there exist $h^*<0$, $\varepsilon >0$, and $\bfx\in\I^n$ such that $a_{\F}(\bfx^*+h\mathbf{e}_i)\leqslant
a_{\F}(\bfx^*)-\varepsilon$ for all $h\in\left]h^*,0\right[$. By nondecreasing monotonicity of $\delta_{\F}$,
$$
\F(\bfx^*+h\mathbf{e}_i)=\delta_{\F}(a_{\F}(\bfx^*+h\mathbf{e}_i))\leqslant\delta_{\F}(a_{\F}(\bfx^*)-\varepsilon)\leqslant\delta_{\F}(a_{\F}(\bfx^*))=\F(\bfx^*)
$$
for all $h\in\left]h^*,0\right[$. By continuity of $\F$, we must have $\F(\bfx^*)=\delta_{\F}(a_{\F}(\bfx^*)-\varepsilon)$ and hence
$$a_{\F}(\bfx^*)=\inf\delta_{\F}^{-1}\{\F(\bfx^*)\}=\inf\delta_{\F}^{-1}\{\delta_{\F}(a_{\F}(\bfx^*)-\varepsilon)\}\leqslant
a_{\F}(\bfx^*)-\varepsilon,
$$
a contradiction.
\end{proof}

\begin{corollary}\label{cor:ContDisc2}
Let $\F\colon\I^n\to\R$ be a nondecreasing function such that $\delta_{\F}$ is continuous. Then the following assertions are equivalent:
\begin{enumerate}
\item[(i)] There exists a continuous solution of Chisini's equation (\ref{eq:FuncEqFG2}).

\item[(ii)] $\M_{\F}$ is a continuous solution of Chisini's equation (\ref{eq:FuncEqFG2}).

\item[(iii)] $\F$ satisfies conditions (\ref{eq:Cond1C}), (\ref{eq:Cond2C}), and (\ref{eq:Cond3C}).
\end{enumerate}
\end{corollary}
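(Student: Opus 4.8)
The plan is to derive Corollary~\ref{cor:ContDisc2} as a direct specialization of Theorem~\ref{thm:MainCont}, by showing that when $\delta_{\F}$ is continuous, conditions (\ref{eq:condNEW1}) and (\ref{eq:condNEW2}) become automatic, so that the five conditions of Theorem~\ref{thm:MainCont}(iii) collapse to the three listed in (iii) here. Since the equivalences $(i)\Leftrightarrow(ii)\Leftrightarrow(iii)$ are already established in Theorem~\ref{thm:MainCont} with the full list of five conditions, it suffices to prove that, under continuity of $\delta_{\F}$, conditions (\ref{eq:condNEW1}) and (\ref{eq:condNEW2}) hold unconditionally; the corollary's three-condition version of (iii) is then logically equivalent to the five-condition version, and the result follows.

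First I would observe, exactly as noted in Remark~\ref{rem:s9d8f7}(ii), that continuity of $\delta_{\F}$ forces the level sets $\delta_{\F}^{-1}\{y\}$ to be closed, so that $a_{\F}(\bfx)=\inf\delta_{\F}^{-1}\{\F(\bfx)\}$ and $b_{\F}(\bfx)=\sup\delta_{\F}^{-1}\{\F(\bfx)\}$ are themselves attained, i.e.\ both belong to $\delta_{\F}^{-1}\{\F(\bfx)\}$. This immediately gives $a_{\F}(\bfx)\in\delta_{\F}^{-1}\{\F(\bfx)\}$ and $b_{\F}(\bfx)\in\delta_{\F}^{-1}\{\F(\bfx)\}$ for \emph{every} $\bfx\in\I^n$, so the implications (\ref{eq:condNEW1}) and (\ref{eq:condNEW2}) are vacuously satisfied (their conclusions hold regardless of the hypotheses $d_{\F}^<(\bfx)=0$ or $d_{\F}^>(\bfx)=0$). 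A minor point to dispatch is the case where $a_{\F}(\bfx)$ or $b_{\F}(\bfx)$ equals an infinite endpoint $\inf\I$ or $\sup\I$ not lying in $\I$; but there the corresponding level set is unbounded in that direction and the infimum/supremum is not a real point of $\delta_{\F}^{-1}\{\F(\bfx)\}$ to begin with, so by Lemma~\ref{lemma:dab} we have $d_{\F}^<(\bfx)=\infty\neq 0$ (resp.\ $d_{\F}^>(\bfx)=\infty\neq 0$) and the relevant implication is again vacuous.

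With (\ref{eq:condNEW1}) and (\ref{eq:condNEW2}) shown to hold automatically, I would invoke Theorem~\ref{thm:MainCont}: assertion (iii) there — namely that $\F$ satisfies (\ref{eq:Cond1C}), (\ref{eq:condNEW1}), (\ref{eq:condNEW2}), (\ref{eq:Cond2C}), and (\ref{eq:Cond3C}) — is now equivalent to $\F$ satisfying just (\ref{eq:Cond1C}), (\ref{eq:Cond2C}), and (\ref{eq:Cond3C}), which is precisely assertion (iii) of the present corollary. The chain $(i)\Leftrightarrow(ii)\Leftrightarrow(iii)$ of Theorem~\ref{thm:MainCont} then transfers verbatim. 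I do not expect any genuine obstacle here; the only care needed is the bookkeeping around infinite endpoints in the preceding paragraph, and a brief note that Lemma~\ref{lemma:NegDisc} (stated just before this corollary, under the stronger hypothesis that $\F$ itself is continuous) is not actually required for this particular reduction — it is presumably positioned to serve the subsequent corollary about continuous $\F$, where one must additionally verify (\ref{eq:Cond2C}) and (\ref{eq:Cond3C}) via left/right-continuity of $a_{\F}$ and $b_{\F}$.
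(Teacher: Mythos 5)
Your proposal is correct and takes essentially the same route as the paper: the paper's proof likewise reduces the corollary to Theorem~\ref{thm:MainCont} by observing (via Remark~\ref{rem:s9d8f7}~(ii)) that continuity of $\delta_{\F}$ makes conditions (\ref{eq:condNEW1}) and (\ref{eq:condNEW2}) automatic. The one point you leave implicit is that condition (\ref{eq:ranFrandF}) --- a hypothesis of Theorem~\ref{thm:MainCont} that is \emph{not} explicitly assumed in the corollary --- must also be verified; it too follows from continuity of $\delta_{\F}$ together with $\delta_{\F}\circ\Min\leqslant\F\leqslant\delta_{\F}\circ\Max$ and the intermediate value theorem, as the paper notes in the same remark.
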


\begin{proof}
Since $\delta_{\F}$ is continuous, the function $\F$ satisfies conditions (\ref{eq:ranFrandF}), (\ref{eq:condNEW1}), and (\ref{eq:condNEW2});
see Remark~\ref{rem:s9d8f7} (ii). We then conclude by Theorem~\ref{thm:MainCont}.
\end{proof}

\begin{corollary}\label{cor:ContDisc3}
Let $\F\colon\I^n\to\R$ be a nondecreasing and continuous function. Then the following assertions are equivalent:
\begin{enumerate}
\item[(i)] There exists a continuous solution of Chisini's equation (\ref{eq:FuncEqFG2}).

\item[(ii)] $\M_{\F}$ is a continuous solution of Chisini's equation (\ref{eq:FuncEqFG2}).

\item[(iii)] $\F$ satisfies condition (\ref{eq:Cond1C}).
\end{enumerate}
\end{corollary}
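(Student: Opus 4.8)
The plan is to derive Corollary~\ref{cor:ContDisc3} as a direct specialization of Theorem~\ref{thm:MainCont}, exactly in the spirit of the preceding Corollary~\ref{cor:ContDisc2}. Since $\F$ is assumed nondecreasing and continuous, continuity of $\F$ forces continuity of its diagonal section $\delta_{\F}=\F\circ(\id,\ldots,\id)$, so in particular condition (\ref{eq:ranFrandF}) holds and Remark~\ref{rem:s9d8f7}~(ii) applies. Consequently conditions (\ref{eq:condNEW1}) and (\ref{eq:condNEW2}) are automatically satisfied. The whole task therefore reduces to showing that, under continuity of $\F$, the remaining conditions (\ref{eq:Cond2C}) and (\ref{eq:Cond3C}) are automatically satisfied as well, so that assertion (iii) of Theorem~\ref{thm:MainCont} collapses to the single condition (\ref{eq:Cond1C}). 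Once that reduction is in place, the equivalence of (i), (ii), and (iii) follows immediately by quoting Theorem~\ref{thm:MainCont}.

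The key step is thus to verify that continuity of $\F$ implies (\ref{eq:Cond2C}) and (\ref{eq:Cond3C}). This is precisely where Lemma~\ref{lemma:NegDisc} enters: it tells us that $a_{\F}$ is left-continuous in each variable and $b_{\F}$ is right-continuous in each variable whenever $\F$ is nondecreasing and continuous. To establish (\ref{eq:Cond2C}), namely $\lim_{h\to 0^-} b_{\F}(\bfx+h\bfe_i)\geqslant a_{\F}(\bfx)$, I would first note the trivial pointwise inequality $a_{\F}\leqslant b_{\F}$ together with nondecreasing monotonicity of $a_{\F}$, which gives $a_{\F}(\bfx+h\bfe_i)\leqslant a_{\F}(\bfx)\leqslant b_{\F}(\bfx)$ for $h<0$; then combine the left-continuity of $a_{\F}$ from Lemma~\ref{lemma:NegDisc} with $a_{\F}\leqslant b_{\F}$ to obtain
$$
\lim_{h\to 0^-} b_{\F}(\bfx+h\bfe_i) \;\geqslant\; \lim_{h\to 0^-} a_{\F}(\bfx+h\bfe_i) \;=\; a_{\F}(\bfx),
$$
which is exactly (\ref{eq:Cond2C}). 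The inequality (\ref{eq:Cond3C}) follows dually, using instead the right-continuity of $b_{\F}$ and the inequality $a_{\F}\leqslant b_{\F}$, so that $\lim_{h\to 0^+} a_{\F}(\bfx+h\bfe_i)\leqslant \lim_{h\to 0^+} b_{\F}(\bfx+h\bfe_i)=b_{\F}(\bfx)$.

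With conditions (\ref{eq:condNEW1}), (\ref{eq:condNEW2}), (\ref{eq:Cond2C}), and (\ref{eq:Cond3C}) all shown to hold gratis under the continuity hypothesis, Theorem~\ref{thm:MainCont}~(iii) reduces to condition (\ref{eq:Cond1C}) alone, and the desired equivalence is immediate. I expect the main (though still minor) obstacle to be making the limit arguments for (\ref{eq:Cond2C}) and (\ref{eq:Cond3C}) fully rigorous: one must be careful that the one-sided limits exist, which is guaranteed because $a_{\F}$ and $b_{\F}$ are nondecreasing in each variable (monotone one-variable functions always admit one-sided limits), and one must correctly pair the monotonicity direction with the appropriate one-sided continuity supplied by Lemma~\ref{lemma:NegDisc}. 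Everything else is a routine citation of Theorem~\ref{thm:MainCont} and Remark~\ref{rem:s9d8f7}~(ii).
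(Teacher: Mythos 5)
Your proposal is correct and follows essentially the same route as the paper: the paper's proof likewise invokes Lemma~\ref{lemma:NegDisc} to dispose of conditions (\ref{eq:Cond2C}) and (\ref{eq:Cond3C}) and then concludes via Corollary~\ref{cor:ContDisc2} (which is itself just Theorem~\ref{thm:MainCont} combined with Remark~\ref{rem:s9d8f7}~(ii)). Your explicit derivation of (\ref{eq:Cond2C}) and (\ref{eq:Cond3C}) from the one-sided continuity of $a_{\F}$ and $b_{\F}$ together with $a_{\F}\leqslant b_{\F}$ is exactly the intended (unstated) argument and is sound.
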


\begin{proof}
By Lemma~\ref{lemma:NegDisc}, $\F$ satisfies conditions (\ref{eq:Cond2C}) and (\ref{eq:Cond3C}). We then conclude by
Corollary~\ref{cor:ContDisc2}.
\end{proof}

\section{Applications}

We briefly describe four applications for these special solutions of Chisini's equation: revisiting the concept of Chisini mean, proposing and
investigating generalizations of idempotency, extending the idempotization process to nondecreasing functions whose diagonal section is not
one-to-one, and characterizing certain transformed continuous functions.

\subsection{The concepts of mean and average revisited}
\label{sec:means}

The study of Chisini's functional equation enables us to better understand the concepts of mean and average. Already discovered and studied by
the ancient Greeks (see e.g.\ \cite[Chapter~3]{Ant98}), the concept of mean has given rise today to a very wide field of investigation with a
huge variety of applications. For general background, see \cite{Bul03,GraMarMesPap09}.

The first modern definition of mean was probably due to Cauchy~\cite{Cau21} who considered in 1821 a mean as an \emph{internal} function, i.e.,
a function $\M\colon\I^n\to\I$ satisfying $\Min\leqslant\M\leqslant\Max$. As it is natural to ask a mean to be nondecreasing, we say that a
function $\M\colon\I^n\to\I$ is a \emph{mean} in $\I^n$ if it is nondecreasing and internal. As a consequence, every mean is idempotent.
Conversely, any nondecreasing and idempotent function is internal and hence is a mean. This well-known fact follows from the immediate
inequalities
$$
\delta_{\M}\circ\Min\leqslant\M\leqslant\delta_{\M}\circ\Max.
$$
Moreover, if $\M\colon\I^n\to\I$ is a mean in $\I^n$ then, for any
subinterval $\J\subseteq\I$, $\M$ is also a mean in $\J^n$.

The concept of mean as an average is usually ascribed to Chisini~\cite[p.~108]{Chi29}, who defined in 1929 a mean associated with a function
$\F\colon\I^n\to\R$ as a solution $\M\colon\I^n\to\I$ of the equation $\F=\delta_{\F}\circ\M$. Unfortunately, as noted by de
Finetti~\cite[p.~378]{deF31} in 1931, Chisini's definition is so general that it does not even imply that the ``mean'' (provided there exists a
unique solution to Chisini's equation) satisfies the internality property. To ensure existence, uniqueness, nondecreasing monotonicity, and
internality of the solution of Chisini's equation it is enough to assume that $\F$ is nondecreasing and that $\delta_{\F}$ is a bijection from
$\I$ onto $\ran(\F)$ (see Corollary~\ref{cor:uniqueness}). Thus, we say that a function $\M\colon\I^n\to\I$ is an \emph{average}\/ in $\I^n$ if
there exists a nondecreasing function $\F\colon\I^n\to\R$, whose diagonal section $\delta_{\F}$ is a bijection from $\I$ onto $\ran(\F)$, such
that $\F=\delta_{\F}\circ\M$. In this case, we say that $\M=\delta_{\F}^{-1}\circ\F$ is the \emph{average associated with $\F$} (or the
\emph{$\F$-level mean} \cite[VI.4.1]{Bul03}) in $\I^n$.

Thus defined, the concepts of mean and average coincide. Indeed, any average is nondecreasing and idempotent and hence is a mean. Conversely,
any mean is the average associated with itself.

Now, by relaxing the strict increasing monotonicity of $\delta_{\F}$ into condition (\ref{eq:ranFrandF}), the existence (but not the uniqueness)
of solutions of the Chisini equation is still ensured (see Proposition~\ref{prop:FdFGrr}) and we have even seen that, if $\F$ is nondecreasing,
there are always means among the solutions (see Theorem~\ref{thm:GF-ND-Id}). This motivates the following general definition.

\begin{definition}\label{de:average2}
A function $\M\colon\I^n\to\I$ is an \emph{average} (or a \emph{Chisini mean}\/ or a \emph{level surface mean}) in $\I^n$ if it is a
nondecreasing and idempotent solution of the equation $\F=\delta_{\F}\circ\M$ for some nondecreasing function $\F\colon\I^n\to\R$. In this case,
we say that $\M$ is an \emph{average associated with $\F$} (or an \emph{$\F$-level mean}) in $\I^n$.
\end{definition}

Given a nondecreasing function $\F\colon\I^n\to\R$ satisfying (\ref{eq:ranFrandF}), the solution $\M_{\F}$ of the associated Chisini's equation
is a noteworthy $\F$-level mean. Indeed, it is a mean (see Theorem~\ref{thm:GF-ND-Id}) which has the same symmetries as $\F$ (see
Proposition~\ref{prop:symm}). Also, if $\F$ is continuous then $\M_{\F}$ is continuous if and only if $a_{\F}=b_{\F}$ on
$\I^n\setminus\Omega_{\F}$ (see Corollary~\ref{cor:ContDisc3}). Moreover, when $\I$ is compact, the map $\F\mapsto\M_{\F}$ commutes with
dualization (see Proposition~\ref{prop:dualization}).

\subsection{Quasi-idempotency and range-idempotency}
\label{sec:Quasi-Id}

Let $\F\colon\I^n\to\R$ be a function satisfying (\ref{eq:ranFrandF}). We have seen in Section~\ref{sec:TrivialS} that, assuming AC (not necessary
if $\delta_{\F}$ is monotonic), there exists an idempotent function $\G\colon\I^n\to\I$ such that $\F=\delta_{\F}\circ\G$. This result motivates
the following definition. We say that a function $\F\colon\I^n\to\R$ satisfying condition (\ref{eq:ranFrandF}) is \emph{quasi-idempotent}\/ if
$\delta_{\F}$ is monotonic. We say that it is \emph{idempotizable}\/ if $\delta_{\F}$ is strictly monotonic.

\begin{proposition}\label{prop:Quasi-Id}
Let $\F\colon\I^n\to\R$ be a function. Then the following assertions are equivalent:
\begin{enumerate}
\item[(i)] $\F$ is quasi-idempotent.

\item[(ii)] $\delta_{\F}$ is monotonic and there is a function $\G\colon\I^n\to\I$ such that $\F=\delta_{\F}\circ\G$.

\item[(iii)] $\delta_{\F}$ is monotonic and there is an idempotent function $\G\colon\I^n\to\I$ such that $\F=\delta_{\F}\circ\G$.

\item[(iv)] $\delta_{\F}$ is monotonic and there are functions $\G\colon\I^n\to\I$ and $f\colon\ran(\G)\to\R$ such that
$\ran(\delta_{\G})=\ran(\G)$ and $\F=f\circ\G$.

\item[(v)] $\delta_{\F}$ is monotonic and there are functions $\G\colon\I^n\to\I$ and $f\colon\ran(\G)\to\R$ such that $\G$ is idempotent and
$\F=f\circ\G$. In this case, $f=\delta_{\F}$.
\end{enumerate}
\end{proposition}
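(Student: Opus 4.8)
The plan is to prove the cycle of implications $(i)\Rightarrow(iii)\Rightarrow(ii)\Rightarrow(i)$ and $(i)\Rightarrow(v)\Rightarrow(iv)\Rightarrow(i)$, since several of these steps are either immediate or rest on results already established in the excerpt. First I would dispatch $(i)\Rightarrow(iii)$: if $\F$ is quasi-idempotent then $\delta_{\F}$ is monotonic, so condition (\ref{eq:ranFrandF}) holds and the Chisini equation has an idempotent solution. Indeed, by the discussion in Section~\ref{sec:TrivialS}, a $Q$-solution $g\circ\F$ exists (no appeal to AC since $\delta_{\F}$ is monotonic), and the idempotization construction given there—redefining $\G$ to be $x_1$ on $\mathrm{diag}(\I^n)$—yields an idempotent $\G$ with $\F=\delta_{\F}\circ\G$. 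The implications $(iii)\Rightarrow(ii)$ and $(v)\Rightarrow(iv)$ are trivial weakenings, and $(ii)\Rightarrow(i)$ is immediate because quasi-idempotency is defined precisely as $\delta_{\F}$ being monotonic, which is part of the hypothesis of $(ii)$.

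The substantive content lies in relating the factorization $\F=f\circ\G$ (assertions $(iv)$ and $(v)$) back to quasi-idempotency and in pinning down $f=\delta_{\F}$. For $(i)\Rightarrow(v)$ I would take the idempotent $\G$ produced above and simply set $f=\delta_{\F}$; then $f\circ\G=\delta_{\F}\circ\G=\F$, and $\dom(f)=\ran(\G)$ follows because $\G$ is idempotent and internal so $\ran(\G)=\I\supseteq\ran(\delta_{\F})$—here I must check the domains match, which is where care is needed. For $(iv)\Rightarrow(i)$, the hypothesis already includes that $\delta_{\F}$ is monotonic, and we must verify condition (\ref{eq:ranFrandF}); this is where the extra assumption $\ran(\delta_{\G})=\ran(\G)$ does its work. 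Composing the diagonal sections, $\delta_{\F}=f\circ\delta_{\G}$, so $\ran(\delta_{\F})=f(\ran(\delta_{\G}))=f(\ran(\G))=\ran(f\circ\G)=\ran(\F)$, giving exactly (\ref{eq:ranFrandF}) and hence quasi-idempotency.

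The claim that $f=\delta_{\F}$ in assertion $(v)$ is the one genuinely delicate point, and I expect the identification of $f$ to be the main obstacle. When $\G$ is idempotent, for any $x\in\I$ we have $\delta_{\F}(x)=\F(x\mathbf{1})=f(\G(x\mathbf{1}))=f(\delta_{\G}(x))=f(x)$, so $f$ and $\delta_{\F}$ agree on all of $\I$; since $\dom(f)=\ran(\G)\subseteq\I$, this forces $f=\delta_{\F}$ on its entire domain. The only thing to watch is that $f$ is only assumed defined on $\ran(\G)$, so I would argue on that set directly rather than on all of $\I$, using idempotency of $\G$ to write each point of $\ran(\G)$ as $\delta_{\G}(x)=x$ for a suitable $x$. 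Once this identification is in hand, the two chains close up and, combined with $(iii)\Rightarrow(ii)\Rightarrow(i)$ and $(iv)\Rightarrow(i)$, establish the equivalence of all five assertions.
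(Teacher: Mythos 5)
Your proposal follows essentially the same route as the paper: the equivalence of (i), (ii), (iii) via the solvability of Chisini's equation without AC when $\delta_{\F}$ is monotonic, the passage to (v) by taking $f=\delta_{\F}|_{\ran(\G)}$ for an idempotent solution $\G$, the trivial weakening to (iv), and the closing range computation $\ran(\delta_{\F})=\ran(f\circ\delta_{\G})=\ran(f\circ\G)=\ran(\F)$ for $(iv)\Rightarrow(i)$; your explicit verification that $f=\delta_{\F}$ on $\ran(\G)$ is a welcome addition that the paper leaves implicit. The one point to repair is your justification of $(ii)\Rightarrow(i)$: quasi-idempotency is \emph{not} defined merely as monotonicity of $\delta_{\F}$ --- the definition also requires condition (\ref{eq:ranFrandF}), so under (ii) you must still derive $\ran(\delta_{\F})=\ran(\F)$ from the existence of a solution $\G$ of $\F=\delta_{\F}\circ\G$; this is exactly the necessity direction of Proposition~\ref{prop:FdFGrr} (the inclusion chain $\ran(\delta_{\F})\subseteq\ran(\F)=\ran(\delta_{\F}\circ\G)\subseteq\ran(\delta_{\F})$), which needs no AC. Two further cosmetic remarks: in $(i)\Rightarrow(v)$ the idempotent $\G$ need not be internal (it need not be nondecreasing), but $\ran(\G)=\I$ already follows from $\delta_{\G}=\id$ alone, so your domain check goes through; and your claim that the identification $f=\delta_{\F}$ is the ``main obstacle'' overstates it --- it is the one-line computation you give.
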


\begin{proof}
The solvability of Chisini's equation does not require AC since $\delta_{\F}$ is monotonic. This shows that $(i)\Leftrightarrow
(ii)\Leftrightarrow (iii)$. To prove that $(iii)\Rightarrow (v)$, just define $f:=\delta_{\F}|_{\ran(\G)}$ and observe that
$\F=\delta_{\F}\circ\G=f\circ\G$. Evidently, $(v)\Rightarrow (iv)$. Finally, to prove that $(iv)\Rightarrow (i)$, just observe that
$\ran(\delta_{\F})=\ran(f\circ\delta_{\G})=\ran(f\circ\G)=\ran(\F)$.
\end{proof}

\begin{corollary}\label{cor:idempotizable}
A function $\F\colon\I^n\to\R$ is idempotizable if and only if $\delta_{\F}$ is a strictly monotonic bijection from $\I$ onto $\ran(\F)$ and
there is a unique idempotent function $\G\colon\I^n\to\I$, namely $\G=\delta_{\F}^{-1}\circ\F$, such that $\F=\delta_{\F}\circ\G$.
\end{corollary}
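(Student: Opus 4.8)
The plan is to derive Corollary~\ref{cor:idempotizable} as a direct specialization of Proposition~\ref{prop:Quasi-Id} together with the uniqueness criterion of Corollary~\ref{cor:uniqueness}. Recall that $\F$ is idempotizable means, by definition, that $\F$ satisfies condition~(\ref{eq:ranFrandF}) and that $\delta_{\F}$ is strictly monotonic. Since strict monotonicity implies monotonicity, $\F$ is in particular quasi-idempotent, so I may invoke the full equivalence of Proposition~\ref{prop:Quasi-Id}. The overall strategy is to show that the ``idempotizable'' hypothesis upgrades the various existence statements of that proposition to existence-and-uniqueness statements, and that the promised formula $\G=\delta_{\F}^{-1}\circ\F$ is exactly the one delivered by Corollary~\ref{cor:uniqueness}.

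For the forward direction, I would first observe that a strictly monotonic function is automatically one-to-one; hence $\delta_{\F}$ is a bijection from $\I$ onto $\ran(\delta_{\F})$, and by condition~(\ref{eq:ranFrandF}) we have $\ran(\delta_{\F})=\ran(\F)$, so $\delta_{\F}$ is a strictly monotonic bijection from $\I$ onto $\ran(\F)$. This makes the inverse $\delta_{\F}^{-1}$ well defined on $\ran(\F)$. Next, because $\delta_{\F}$ is monotonic, Proposition~\ref{prop:Quasi-Id}~$(i)\Rightarrow(iii)$ furnishes an idempotent solution $\G$ of $\F=\delta_{\F}\circ\G$ without appeal to AC. For uniqueness, I would apply Corollary~\ref{cor:uniqueness}: since $\delta_{\F}$ is one-to-one, the Chisini equation has a \emph{unique} solution, necessarily $\G=\delta_{\F}^{-1}\circ\F$; a fortiori there is a unique \emph{idempotent} solution. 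One then checks that this unique $\G$ is indeed idempotent, which is immediate since $\delta_{\F}\circ\delta_{\F}^{-1}\circ\delta_{\F}=\delta_{\F}$ forces $\delta_{\G}=\id$.

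For the converse, I would assume that $\delta_{\F}$ is a strictly monotonic bijection from $\I$ onto $\ran(\F)$ and that the stated unique idempotent solution exists. The bijectivity onto $\ran(\F)$ gives $\ran(\delta_{\F})=\ran(\F)$, which is precisely condition~(\ref{eq:ranFrandF}), while strict monotonicity of $\delta_{\F}$ is exactly the defining property of idempotizability. Hence $\F$ is idempotizable, closing the equivalence.

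I do not anticipate a genuine obstacle here, as the result is essentially a bookkeeping corollary: every ingredient (existence of an idempotent solution, uniqueness from injectivity of $\delta_{\F}$, and the explicit formula) has already been isolated in Corollary~\ref{cor:uniqueness} and Proposition~\ref{prop:Quasi-Id}. The only point requiring a little care is to make sure the two notions of ``unique solution'' match up: Corollary~\ref{cor:uniqueness} gives uniqueness among \emph{all} solutions, which is stronger than, and therefore implies, uniqueness among idempotent solutions, so the formula $\G=\delta_{\F}^{-1}\circ\F$ transfers directly. The mildly delicate step is thus verifying that strict monotonicity (rather than mere injectivity) is what the definition of idempotizable demands, so that the converse recovers exactly that hypothesis rather than a weaker one.
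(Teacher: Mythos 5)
Your proposal is correct and follows the route the paper intends: the corollary is stated without proof precisely because it is the immediate combination of the definition of idempotizability with Corollary~\ref{cor:uniqueness} (uniqueness and the formula $\G=\delta_{\F}^{-1}\circ\F$ from injectivity of $\delta_{\F}$, no AC needed since $\delta_{\F}$ is monotonic) and the observation that this $\G$ is idempotent. Your handling of the one delicate point --- that uniqueness among all solutions transfers to uniqueness among idempotent solutions once one idempotent solution is exhibited --- is exactly right.
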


Recall that a function $\F\colon\I^n\to\I$ is \emph{range-idempotent} if $\delta_{\F}\circ\F=\F$ (see Section~\ref{sec:TrivialS}). In this case,
$f:=\delta_{\F}$ necessarily satisfies the functional equation $f\circ f=f$, called the \emph{idempotency equation} \cite[Sect.~11.9E]{KucChoGer90}.
We can easily see \cite{Kuc61} that a function $f\colon\I\to\R$ solves this equation if and only if $f|_{\ran(f)}={\rm id}|_{\ran(f)}$; see also
\cite[Sect.~2.1]{SchSkl83}. The next two results characterize the family of nondecreasing solutions and the subfamily of nondecreasing and
continuous solutions of the idempotency equation. The proofs are straightforward and hence omitted.

\begin{proposition}\label{prop:Feigenbaum}
A nondecreasing function $f\colon\I\to\R$ satisfies $f\circ f=f$ if and only if the following conditions hold:
\begin{enumerate}
\item[(i)] If $f$ is strictly increasing on $\J\subseteq\I$ ($\J$ not a singleton) then $f|_{\J}=\id|_{\J}$.

\item[(ii)] If $f=c_{\J}$ is constant on $\J\subseteq\I$ then $c_{\J}\in f^{-1}\{c_{\J}\}$.
\end{enumerate}
\end{proposition}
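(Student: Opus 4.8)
The plan is to reduce the statement to the characterization recalled immediately before it, namely that a function $f\colon\I\to\R$ satisfies $f\circ f=f$ if and only if $f|_{\ran(f)}=\id|_{\ran(f)}$; in other words, if and only if every value attained by $f$ is a fixed point. The two structural conditions (i) and (ii) should then emerge by analysing the local behaviour of a nondecreasing $f$ according to the three possibilities: intervals on which $f$ is strictly increasing, intervals on which $f$ is constant, and the remaining points. First I would prove necessity (the fixed-point property forces (i) and (ii)), then sufficiency (conditions (i) and (ii) recover the fixed-point property).

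For necessity, assume $f|_{\ran(f)}=\id|_{\ran(f)}$. To get (i), suppose $f$ is strictly increasing on a non-singleton interval $\J$ and fix an interior point $x\in\J$; put $y=f(x)\in\ran(f)$, so $f(y)=y$. If $y>x$, then for every $t$ with $x\leqslant t\leqslant y$ monotonicity gives $f(x)\leqslant f(t)\leqslant f(y)$, hence $f(t)=y$; choosing $t\in\J$ with $x<t<y$ (possible since $x$ is interior) contradicts strict monotonicity of $f$ on $\J$. The case $y<x$ is symmetric, so $f=\id$ on the interior of $\J$. To get (ii), if $f\equiv c_{\J}$ on $\J$, then $c_{\J}\in\ran(f)$, so $f(c_{\J})=c_{\J}$, which is exactly $c_{\J}\in f^{-1}\{c_{\J}\}$.

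For sufficiency, assume (i) and (ii); I must show $f(y)=y$ for every $y=f(x)\in\ran(f)$. Since $f$ is nondecreasing, the level set $f^{-1}\{y\}$ is an interval. If it is non-degenerate, then $f$ is constant $=y$ on it and (ii) yields $y\in f^{-1}\{y\}$, i.e.\ $f(y)=y$. If it is a single point $\{x\}$, then for $t$ slightly less (resp.\ greater) than $x$ one has $f(t)<y$ (resp.\ $f(t)>y$), so one expects $f$ to be strictly increasing through $x$; applying (i) on a small interval about $x$ would give $f(x)=x=y$, hence $f(y)=y$. Combining the two directions with the recalled equivalence completes the argument.

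The step I expect to be the main obstacle is precisely this singleton case in the sufficiency direction, together with the dual issue of endpoints in the necessity argument: both concern the interface between a strictly increasing stretch and a constant stretch. The delicate point is that a value with a singleton level set need not sit inside a genuine interval of strict increase to which (i) applies — it could a priori be attained only at an isolated jump point where $f$ is constant on both sides, a configuration that conditions (i) and (ii) do not literally govern. Resolving this cleanly is where the care lies, and I would handle it by passing to one-sided limits and organising $\I$ into its maximal intervals of strict monotonicity and of constancy, so that (i) and (ii) are applied relative to that decomposition; this is also what makes the boundary behaviour in the necessity proof (the endpoints of $\J$) fall into place.
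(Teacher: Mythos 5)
The paper offers no proof to compare with here: it declares the proofs of Proposition~\ref{prop:Feigenbaum} and Corollary~\ref{cor:Feigenbaum} straightforward and omits them, so your attempt must stand on its own. The two places you yourself flag as delicate are genuine gaps, not merely points requiring care. For the necessity of (i), your argument yields $f=\id$ only on the interior of $\J$, and the endpoint case cannot be repaired under the literal reading of the statement: take $\I=[0,1]$, $f(t)=t$ on $[0,\frac12)$ and $f(t)=\frac34$ on $[\frac12,1]$. Every value in $\ran(f)=[0,\frac12)\cup\{\frac34\}$ is a fixed point, so $f\circ f=f$; yet $f$ is strictly increasing on $\J=[0,\frac12]$ in the usual sense while $f(\frac12)=\frac34\neq\frac12$. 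No passage to one-sided limits or to maximal intervals of strict increase will close this, because (i) as written simply does not follow from $f\circ f=f$ at such an endpoint; your interior-point conclusion is the most that is true.

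For sufficiency, the singleton-level-set case hinges entirely on how $\J$ is read in (ii). If singletons $\J=\{t\}$ are admitted, then (ii) literally asserts $f(f(t))=f(t)$ for every $t\in\I$, i.e. the conclusion itself, and the whole case analysis is superfluous. If $\J$ must be a nondegenerate interval, then the configuration you worry about (an isolated jump point with constancy on both sides) is a genuine counterexample: on $\I=[0,1]$ let $f=0$ on $[0,\frac12)$, $f(\frac12)=\frac35$, and $f=\frac{9}{10}$ on $(\frac12,1]$; then (i) holds vacuously, (ii) holds since $f(0)=0$ and $f(\frac{9}{10})=\frac{9}{10}$, but $f(f(\frac12))=\frac{9}{10}\neq\frac35$. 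So the step you defer (``applying (i) on a small interval about $x$'') fails because no such interval of strict increase need exist. What is correct in your write-up is the reduction to $f|_{\ran(f)}=\id|_{\ran(f)}$, the interior-point part of the necessity of (i), the necessity of (ii), and the nondegenerate-level-set part of sufficiency; completing the proof requires first reformulating the statement (e.g., restricting the conclusion of (i) to interior points and letting $\J$ range over singletons in (ii)).
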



\begin{corollary}\label{cor:Feigenbaum}
A nondecreasing and continuous function $f\colon\I\to\R$ satisfies $f\circ f=f$ if and only if there are $a,b\in\I\cup\{-\infty,\infty\}$,
$a\leqslant b$, with $a<b$ if $a\notin\I$ or $b\notin\I$, such that $f(x)=\Max(a,\Min(x,b))$.
\end{corollary}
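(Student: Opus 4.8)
The plan is to combine Proposition~\ref{prop:Feigenbaum} with the structure theory of nondecreasing continuous functions on an interval. First I would invoke Proposition~\ref{prop:Feigenbaum}: a nondecreasing $f$ satisfies $f\circ f=f$ iff on each maximal subinterval of strict increase it agrees with $\id$, and on each maximal interval where it is constant with value $c_{\J}$ it satisfies $c_{\J}\in f^{-1}\{c_{\J}\}$ (i.e.\ the constant value is a fixed point). The task is then to show that, under the additional hypothesis of continuity, this local description forces the global form $f(x)=\Max(a,\Min(x,b))$.

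For the sufficiency direction (easy), I would simply check that $f(x)=\Max(a,\Min(x,b))$ is nondecreasing and continuous, and that it is the identity on $[a,b]\cap\I$ and constant (equal to $a$ below $a$, equal to $b$ above $b$) elsewhere, so that $f\circ f=f$ holds directly; the side condition $a<b$ when $a\notin\I$ or $b\notin\I$ guarantees the clamped value genuinely lies in $\ran(f)\subseteq\I$ so that $f$ maps $\I$ into $\R$ consistently.

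For the necessity direction, suppose $f$ is nondecreasing, continuous, and idempotent. The key observation is that $\ran(f)$ is the fixed-point set of $f$ (from $f|_{\ran(f)}=\id|_{\ran(f)}$), and since $f$ is continuous and nondecreasing, $\ran(f)$ is an interval. I would let this interval be $\overline{[a,b]}\cap\I$ with $a:=\inf\ran(f)$ and $b:=\sup\ran(f)$ in $\I\cup\{-\infty,\infty\}$. On $\ran(f)$ itself $f$ is the identity, so $f(x)=x=\Max(a,\Min(x,b))$ there. For $x<a$ I would argue that continuity plus monotonicity force $f(x)=a$: since $a=\inf\ran(f)$ and $f(x)\in\ran(f)$ with $f(x)\leqslant f(a)=a$ (when $a\in\I$), monotonicity gives $f(x)\leqslant a$, while $f(x)\geqslant a$ because $f(x)\in\ran(f)$; hence $f(x)=a$, matching $\Max(a,\Min(x,b))=a$. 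The case $x>b$ is handled dually, yielding $f(x)=b$. Finally I would verify the boundary condition: if $a\notin\I$ (so $\I$ is open or half-open at its left end and $a=\inf\I$ is not attained), then there are points $x<a$ with no room, forcing $a<b$ for the constant clamp to be a legitimate value in $\I$; the symmetric remark covers $b\notin\I$.

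The main obstacle I expect is the careful bookkeeping at the endpoints $a,b$ when they are infinite or not attained in $\I$ — distinguishing whether $a\in\I$ (so $a$ is a genuine fixed point) versus $a=\inf\I\notin\I$ (so the clamp ``$=a$'' branch is vacuous) — and ensuring the stated degeneracy condition ``$a<b$ if $a\notin\I$ or $b\notin\I$'' is exactly what is needed to keep $f$ well defined into $\R$ while ruling out spurious constant solutions. Since the paper explicitly states the proof is straightforward and omits it, I would keep these endpoint verifications terse, citing Proposition~\ref{prop:Feigenbaum} for the local characterization and letting continuity do the gluing.
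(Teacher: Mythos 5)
The paper omits the proof of this corollary (it is declared straightforward), so there is no official argument to compare against. Your strategy --- identify $\ran(f)$ with the fixed-point set of $f$ via $f\circ f=f$, use continuity of the nondecreasing $f$ on the interval $\I$ to conclude that $\ran(f)$ is a subinterval on which $f=\id$, and show that $f$ clamps to the endpoints outside it --- is the natural one and is essentially correct; note that your necessity argument in fact never uses Proposition~\ref{prop:Feigenbaum}, it works directly from the idempotency equation.

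One step is not justified as written: for $x<a$ you invoke $f(x)\leqslant f(a)=a$, which presupposes that $a=\inf\ran(f)$ is itself a fixed point, i.e.\ that the interval $\ran(f)$ contains its infimum. That is not automatic and is precisely the delicate endpoint case. The clean fix avoids $f(a)$ altogether: if $x\in\I$ satisfies $x<a$, then every $y\in\ran(f)$ satisfies $y\geqslant a>x$, hence $y=f(y)\geqslant f(x)$ by monotonicity, so $f(x)\leqslant\inf\ran(f)=a$; since $f(x)\in\ran(f)$ also gives $f(x)\geqslant a$, we get $f(x)=a$, and as a by-product $a\in\ran(f)$. This shows the lower endpoint is attained whenever some point of $\I$ lies strictly below it, which is exactly the dichotomy you need to choose $a\in\I$ in that case and $a=-\infty$ otherwise. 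A second, minor point: since $a,b$ are required to lie in $\I\cup\{-\infty,\infty\}$, the clause ``$a<b$ if $a\notin\I$ or $b\notin\I$'' only concerns $a,b\in\{-\infty,\infty\}$ and serves to exclude the degenerate choices $a=b=\pm\infty$, which would not define a real-valued function; your reading of it as ``$a=\inf\I$ not attained'' is not what the statement says, though this slip does not affect the validity of your argument.
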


\begin{remark}
Corollary~\ref{cor:Feigenbaum} was established in \cite[Sect.~2.2]{SchSkl83} when $\I$ is a bounded closed interval. It was also established in a
more general setting when the domain of variables is a bounded distributive lattice; see \cite{CouMar}.
\end{remark}

It is an immediate fact that a range-idempotent function $\F\colon\I^n\to\I$ with monotonic $\delta_{\F}$ is quasi-idempotent. Therefore, by
combining Proposition~\ref{prop:Quasi-Id} and Corollary~\ref{cor:Feigenbaum}, we see that a function $\F\colon\I^n\to\I$ is range-idempotent
with nondecreasing and continuous $\delta_{\F}$ if and only if there are $a,b\in\I\cup\{-\infty,\infty\}$, $a\leqslant b$, with $a<b$ if
$a\notin\I$ or $b\notin\I$, and an idempotent function $\G\colon\I^n\to\I$ such that
$$
\F(\bfx)=\Max(a,\Min(\G(\bfx),b)).
$$

\subsection{Idempotization process}

Corollary~\ref{cor:idempotizable} makes it possible to define an idempotent function $\G$ from any idempotizable function $\F$ (see Section~\ref{sec:Quasi-Id}), simply by writing $\G=\delta_{\F}^{-1}\circ\F$, hence the name ``idempotizable''. This generation process is known as
the \emph{idempotization process}; see \cite[Sect.~3.1]{CalKolKomMes02}. Of course, if $\F$ is nondecreasing then so is $\G$ and hence $\G$ is a
mean, namely the $\F$-level mean $\M_{\F}$ (see Section~\ref{sec:means}).

\begin{example}
From the \emph{Einstein sum}, defined on
$\left]-1,1\right[^2$ by
$$
\F(x_1,x_2)=\varphi^{-1}(\varphi(x_1)+\varphi(x_2))=\frac{x_1+x_2}{1+x_1x_2}\, ,
$$
where $\varphi=\mathrm{arctanh}$, we generate the quasi-arithmetic mean
$$
\M_{\F}(x_1,x_2)=\textstyle{\varphi^{-1}\big(\frac 12\varphi(x_1)+\frac 12\varphi(x_2)\big)}=\frac{1+x_1x_2-(1-x_1^2)^{1/2}(1-x_2^2)^{1/2}}{x_1+x_2}\, .
$$
\end{example}

Theorem~\ref{thm:GF-ND-Id} shows that we can extend this process to any nondecreasing and quasi-idempotent function $\F$ simply by considering
any $\F$-level mean (e.g., $\M_{\F}$). We call this process the \emph{generalized idempotization process}.

It may happen that $\M_{\F}$ be very difficult to calculate. The following result may then be helpful in obtaining alternative $\F$-level means.

\begin{proposition}\label{prop:TransfF}
Let $\F\colon\I^n\to\R$ be a nondecreasing function satisfying condition (\ref{eq:ranFrandF}), let $\J$ be a real interval, and let
$\F'\colon\J^n\to\R$ be defined by $\F':=\F\circ(\varphi,\ldots,\varphi)$, where $\varphi\colon\J\to\I$ is a strictly monotonic and continuous
function. Then, for any $\psi\in Q(\varphi)$, the function $\G'\colon\J^n\to\J$, defined by
$\G':=\psi\circ\M_{\F}\circ(\varphi,\ldots,\varphi)$,
\begin{itemize}
\item[(i)] is a well-defined $\F'$-level mean,

\item[(ii)] has the same symmetries as $\F$ and $\F'$, and

\item[(iii)] is continuous if $\F$ satisfies conditions (\ref{eq:Cond1C}), (\ref{eq:condNEW1}), (\ref{eq:condNEW2}), (\ref{eq:Cond2C}), and
(\ref{eq:Cond3C}).
\end{itemize}
\end{proposition}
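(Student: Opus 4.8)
The plan is to verify the three claims by reducing everything to known properties of $\M_{\F}$ (Theorem~\ref{thm:GF-ND-Id} and Proposition~\ref{prop:symm}) together with the change-of-variables transformation $\varphi$. The key preliminary observation is that, since $\varphi\colon\J\to\I$ is strictly monotonic and continuous, any $\psi\in Q(\varphi)$ satisfies $\psi\circ\varphi=\id$ on $\J$ (by~(\ref{eq:defqi11}) applied to $\varphi$, using that $\varphi$ is injective so each level set $\varphi^{-1}\{y\}$ is a singleton for $y\in\ran(\varphi)$). I would first treat the case where $\varphi$ is increasing, and note that the decreasing case follows by an analogous dualization argument; in the increasing case $\psi$ is itself nondecreasing (quasi-inverses of nondecreasing functions are nondecreasing, as recalled in Section~\ref{sec:TrivialS}).

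For claim (i), I would check separately that $\G'$ is well-defined, nondecreasing, idempotent, and solves the Chisini equation for $\F'$. Well-definedness and $\ran(\G')\subseteq\J$ follow because $\M_{\F}$ maps into $\I=\ran(\varphi)\subseteq\dom(\psi)$ and $\psi$ maps $\ran(\varphi)$ back into $\J$. Monotonicity is a composition of three nondecreasing maps. For idempotency, compute
$$
\G'(x\mathbf{1})=\psi\big(\M_{\F}(\varphi(x)\mathbf{1})\big)=\psi\big(\varphi(x)\big)=x,
$$
using idempotency of $\M_{\F}$ in the middle step and $\psi\circ\varphi=\id$ at the end. To see that $\G'$ solves the Chisini equation for $\F'$, I would show $\delta_{\F'}=\delta_{\F}\circ\varphi$, whence $\delta_{\F'}\circ\G' = \delta_{\F}\circ\varphi\circ\psi\circ\M_{\F}\circ(\varphi,\ldots,\varphi)$; here the composition $\varphi\circ\psi$ equals the identity on $\ran(\varphi)=\I$, which is where $\M_{\F}$ takes its values, so this collapses to $\delta_{\F}\circ\M_{\F}\circ(\varphi,\ldots,\varphi)=\F\circ(\varphi,\ldots,\varphi)=\F'$. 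Thus $\G'$ is a nondecreasing idempotent solution, i.e.\ an $\F'$-level mean by Definition~\ref{de:average2}.

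Claim (ii) follows from the fact that pre- and post-composition with diagonal maps $(\varphi,\ldots,\varphi)$ and $\psi$ preserve the action of any permutation $\sigma$: for $\sigma\in\mathfrak{S}_n$ one has $[(\varphi,\ldots,\varphi)(\bfx)]_\sigma=(\varphi,\ldots,\varphi)([\bfx]_\sigma)$, so $\sigma$ is a symmetry of $\G'$ iff it is a symmetry of $\M_{\F}$, which by Proposition~\ref{prop:symm} holds iff it is a symmetry of $\F$; and a symmetry of $\F$ is visibly a symmetry of $\F'=\F\circ(\varphi,\ldots,\varphi)$. For claim (iii), I would invoke Theorem~\ref{thm:MainCont}: under the five listed conditions $\M_{\F}$ is continuous, and then $\G'=\psi\circ\M_{\F}\circ(\varphi,\ldots,\varphi)$ is a composition of continuous functions, hence continuous. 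The main obstacle—and the only point requiring genuine care rather than bookkeeping—is the continuity of $\psi$ on the relevant set: a quasi-inverse need not be continuous in general. I expect this to be handled by observing that $\M_{\F}$ takes values in $\I=\ran(\varphi)$, and that a quasi-inverse $\psi$ of a strictly monotonic continuous $\varphi$ restricted to $\ran(\varphi)$ coincides with the genuine inverse $\varphi^{-1}$, which is continuous; so continuity of $\G'$ only uses $\psi|_{\ran(\varphi)}=\varphi^{-1}$, which is exactly where the argument feeds.
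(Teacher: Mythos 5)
Your proposal follows essentially the same route as the paper: reduce everything to Theorem~\ref{thm:GF-ND-Id}, Proposition~\ref{prop:symm} and Theorem~\ref{thm:MainCont}, and use the identities $\psi\circ\varphi=\id$ on $\J$ and $\varphi\circ\psi=\id$ on $\ran(\varphi)$. The one step that needs repair is your justification of well-definedness (and, downstream, of the collapse $\varphi\circ\psi=\id$ and of the continuity of $\psi$ at the relevant points): you assert $\I=\ran(\varphi)$, but the hypothesis $\varphi\colon\J\to\I$ does not make $\varphi$ surjective onto $\I$, so ``$\M_{\F}$ maps into $\ran(\varphi)$'' is not automatic. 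The paper's proof supplies exactly the missing ingredient: since $\M_{\F}$ is nondecreasing and idempotent it is \emph{internal}, hence $\Min\leqslant\M_{\F}\leqslant\Max$, so $\M_{\F}(\varphi(x_1),\ldots,\varphi(x_n))$ lies between $\min_i\varphi(x_i)$ and $\max_i\varphi(x_i)$ and therefore in $\ran(\varphi)$ (an interval, by continuity of $\varphi$ on the interval $\J$). With that substitution every subsequent step of your argument — including the observation that $\psi|_{\ran(\varphi)}=\varphi^{-1}$ is continuous, which is indeed the right way to handle continuity of $\psi$ in part (iii) — goes through, and the remainder of your proof (idempotency, the computation $\delta_{\F'}\circ\G'=\F'$, the symmetry transfer, and the appeal to Theorem~\ref{thm:MainCont}) matches the paper's.
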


\begin{proof}
Since $\M_{\F}$ is nondecreasing and idempotent, it is internal (see Section~\ref{sec:means}). Thus, $\varphi$ and
$\M_{\F}\circ(\varphi,\ldots,\varphi)$ have the same range and hence $\G'$ is well defined and even nondecreasing. Also, since $\varphi\in
Q(\psi)$ and $\ran(\psi)=\J$, we have $\delta_{\G'}=\psi\circ\varphi=\mathrm{id}$, which means that $\G'$ is idempotent. Moreover, we have
$$
\delta_{\F'}\circ\G' = \delta_{\F}\circ\varphi\circ\psi\circ\M_{\F}\circ(\varphi,\ldots,\varphi)=
\delta_{\F}\circ\M_{\F}\circ(\varphi,\ldots,\varphi)= \F\circ(\varphi,\ldots,\varphi)= \F',
$$
which shows that $\G'$ is an $\F'$-level mean.
Evidently, $\G'$ has the same symmetries as $\M_{\F}$ which, in turn, has the same symmetries as $\F$ (see Proposition~\ref{prop:symm}).
Finally, if $\F$ satisfies conditions (\ref{eq:Cond1C}), (\ref{eq:condNEW1}), (\ref{eq:condNEW2}), (\ref{eq:Cond2C}), and (\ref{eq:Cond3C}),
then $\M_{\F}$ is continuous (see Theorem~\ref{thm:MainCont}) and, since both $\varphi$ and $\psi$ are continuous, so is $\G'$.
\end{proof}

\begin{example}
The \emph{continuous Archimedean t-norm} $\mathsf{T}^{\varphi}\colon [0,1]^2\to [0,1]$ generated by the continuous strictly decreasing function
$\varphi\colon [0,1]\to [0,\infty]$, with $\varphi(1)=0$, is defined by
$$
\mathsf{T}^{\varphi}(x_1,x_2)=\psi(\varphi(x_1)+\varphi(x_2)),
$$
where
$\psi\in Q(\varphi)$ (see \cite{KleMes05}). When $\varphi(0)=\infty$, the t-norm is said to be \emph{strict} and is of the form
$$
\mathsf{T}^{\varphi}(x_1,x_2)=\varphi^{-1}(\varphi(x_1)+\varphi(x_2)).
$$
The mean $\M_{\mathsf{T}^{\varphi}}$ is then the \emph{quasi-arithmetic
mean}
$$
\M_{\varphi}(x_1,x_2)=\textstyle{\varphi^{-1}\big(\frac 12\varphi(x_1)+\frac 12\varphi(x_2)\big)}
$$
and we can write
$\mathsf{T}^{\varphi}=\delta_{\mathsf{T}^{\varphi}}\circ\M_{\varphi}$. When $\varphi(0)<\infty$, the t-norm is said to be \emph{nilpotent} and
is of the form
$$
\mathsf{T}^{\varphi}(x_1,x_2)=\varphi^{-1}\big(\Min(\varphi(0),\varphi(x_1)+\varphi(x_2))\big).
$$
In this case, the mean
$\M_{\mathsf{T}^{\varphi}}$ may be very difficult to calculate. However, using Proposition~\ref{prop:TransfF} with $\F$ being the sum function,
it is easy to see that the quasi-arithmetic mean $\M_{\varphi}$ is again a $\mathsf{T}^{\varphi}$-level mean so that we can write
$\mathsf{T}^{\varphi}=\delta_{\mathsf{T}^{\varphi}}\circ\M_{\varphi}$, with
$\delta_{\mathsf{T}^{\varphi}}(x)=\varphi^{-1}\big(\Min(\varphi(0),2\varphi(x))\big)$.
\end{example}

\subsection{Transformed continuous functions}

We now consider the problem of finding necessary and sufficient conditions on a given nondecreasing function $\F\colon\I^n\to\R$ for its
factorization as $\F=f\circ\G$, where $\G\colon\I^n\to\I$ is nondecreasing and continuous and $f\colon\ran(\G)\to\R$ is nondecreasing. Such a
function $\F$ is then continuous up to possible discontinuities of $f$.

The following result solves this problem when we further assume that $\G$ satisfies condition (\ref{eq:Cond1C}). The general case remains an
interesting open problem.

\begin{theorem}\label{thm:TrContFct}
Let $\F\colon\I^n\to\R$ be a nondecreasing function. The following assertions are equivalent:
\begin{enumerate}
\item[(i)] There is a nondecreasing and continuous function $\G\colon\I^n\to\I$, satisfying condition (\ref{eq:Cond1C}), and a nondecreasing
function $f\colon\ran(\G)\to\R$ such that $\F=f\circ\G$.

\item[(ii)] $\F$ satisfies conditions (\ref{eq:ranFrandF}), (\ref{eq:Cond1C}), (\ref{eq:condNEW1}), (\ref{eq:condNEW2}), (\ref{eq:Cond2C}), and
(\ref{eq:Cond3C}).
\end{enumerate}
If these conditions hold, then we can choose $\G=\M_{\F}$ and $f=\delta_{\F}$.
\end{theorem}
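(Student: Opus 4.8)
The plan is to prove Theorem~\ref{thm:TrContFct} by establishing both implications, leaning heavily on the machinery already developed for $\M_{\F}$ and on Proposition~\ref{prop:Quasi-Id} and Theorem~\ref{thm:MainCont}. For the direction $(ii)\Rightarrow(i)$, the work is essentially already done: if $\F$ satisfies all six conditions, then by Theorem~\ref{thm:MainCont} the function $\G:=\M_{\F}$ is a nondecreasing, idempotent, and continuous solution of $\F=\delta_{\F}\circ\M_{\F}$. Setting $f:=\delta_{\F}|_{\ran(\M_{\F})}$, which is nondecreasing since $\F$ (hence $\delta_{\F}$) is nondecreasing, we obtain $\F=f\circ\G$. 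It remains only to check that this $\G$ satisfies condition (\ref{eq:Cond1C}), but that is one of the hypotheses in (ii), so this direction follows almost immediately.

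The substantive direction is $(i)\Rightarrow(ii)$. First I would observe that $\F=f\circ\G$ with $\G$ idempotent (internal) forces $\delta_{\F}=f\circ\delta_{\G}=f\circ\id=f$ on $\ran(\G)$, so that $f=\delta_{\F}$ and $\F=\delta_{\F}\circ\G$; this shows $\G$ is a genuine solution of Chisini's equation and that (\ref{eq:ranFrandF}) holds (via the range argument $\ran(\delta_{\F})=\ran(f\circ\delta_{\G})=\ran(f\circ\G)=\ran(\F)$, exactly as in the proof of Proposition~\ref{prop:Quasi-Id}). Next, since $\G$ is a \emph{continuous} solution, Theorem~\ref{thm:MainCont} $(i)\Rightarrow(iii)$ immediately yields conditions (\ref{eq:Cond1C}), (\ref{eq:condNEW1}), (\ref{eq:condNEW2}), (\ref{eq:Cond2C}), and (\ref{eq:Cond3C}). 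Thus the existence of one continuous solution suffices to force all five discontinuity-obstruction conditions, and together with (\ref{eq:ranFrandF}) this is precisely assertion (ii).

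The potential subtlety — and the step I would watch most carefully — is the logical role of the hypothesis that $\G$ itself satisfies condition (\ref{eq:Cond1C}). Condition (\ref{eq:Cond1C}) is phrased for $\F$ in terms of $a_{\F}$ and $b_{\F}$, so I would need to confirm that requiring it of $\G$ translates correctly. Since $\G$ is idempotent, $\delta_{\G}=\id$ is injective, whence $a_{\G}=b_{\G}=\G$ everywhere and $\G$ trivially satisfies its own version of (\ref{eq:Cond1C}); the real content is that this hypothesis, combined with $\F=\delta_{\F}\circ\G$, lets us identify $\Omega_{\F}$ and $\Omega_{\G}$ well enough to transfer (\ref{eq:Cond1C}) to $\F$. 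Concretely, on $\I^n\setminus\Omega_{\F}$ the level set $\delta_{\F}^{-1}\{\F(\bfx)\}$ must collapse to a singleton, and I would derive this from continuity of $\G$ at such points together with $\G(\bfx)\in\delta_{\F}^{-1}\{\F(\bfx)\}$, exactly the mechanism of Lemma~\ref{lemma:disc}. Once (\ref{eq:Cond1C}) is secured for $\F$, the remaining conditions follow from Theorem~\ref{thm:MainCont} as above, and the closing claim that one may take $\G=\M_{\F}$ and $f=\delta_{\F}$ is just the $(iii)\Rightarrow(ii)$ half of that theorem.
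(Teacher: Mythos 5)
Your proof of $(ii)\Rightarrow(i)$ is essentially the paper's, up to one small misattribution: the fact that $\G=\M_{\F}$ satisfies condition (\ref{eq:Cond1C}) is not ``one of the hypotheses in (ii)'' (those hypotheses concern $a_{\F},b_{\F},\Omega_{\F}$, not $a_{\M_{\F}},b_{\M_{\F}},\Omega_{\M_{\F}}$); it follows instead from the idempotency of $\M_{\F}$, which forces $a_{\M_{\F}}=b_{\M_{\F}}=\M_{\F}$ everywhere since $\delta_{\M_{\F}}=\id$ is injective. This is easily repaired.

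The direction $(i)\Rightarrow(ii)$, however, has a genuine gap: assertion (i) does \emph{not} assume that $\G$ is idempotent, yet your entire argument rests on that assumption. You use $\delta_{\G}=\id$ to conclude $f=\delta_{\F}$ and $\F=\delta_{\F}\circ\G$, and hence to treat $\G$ as a continuous solution of the Chisini equation for $\F$, to which you then apply Theorem~\ref{thm:MainCont}. For a general nondecreasing continuous $\G$ none of this holds: $\delta_{\F}=f\circ\delta_{\G}\neq f$ and $\G$ need not solve $\F=\delta_{\F}\circ\G$. Your reading also renders the hypothesis ``$\G$ satisfies (\ref{eq:Cond1C})'' vacuous, whereas the remark following the theorem makes clear it is substantive (dropping it can destroy (\ref{eq:Cond1C}) for $\F$). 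The missing idea is to apply Corollary~\ref{cor:ContDisc3} to $\G$ itself --- this is exactly where the hypothesis that $\G$ is continuous and satisfies (\ref{eq:Cond1C}) is consumed --- obtaining $\G=\delta_{\G}\circ\M_{\G}$ with $\M_{\G}$ a nondecreasing, idempotent, \emph{continuous} solution of $\G$'s own Chisini equation. Then $\F=f\circ\G=f\circ\delta_{\G}\circ\M_{\G}=\delta_{\F}\circ\M_{\G}$, so it is $\M_{\G}$ (not $\G$) that is a continuous solution of the Chisini equation for $\F$; condition (\ref{eq:ranFrandF}) follows from Proposition~\ref{prop:FdFGrr}, and the remaining five conditions follow from Theorem~\ref{thm:MainCont} $(i)\Rightarrow(iii)$, as you intended.
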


\begin{proof}
Let us prove that $(i)\Rightarrow (ii)$. By Corollary~\ref{cor:ContDisc3} we have that $\G=\delta_{\G}\circ\M_{\G}$, where $\M_{\G}$ is
continuous. If follows that $\F=f\circ\delta_{\G}\circ\M_{\G}=\delta_{\F}\circ\M_{\G}$ and hence $\F$ satisfies (\ref{eq:ranFrandF}). We then
conclude by Theorem~\ref{thm:MainCont}.

Let us prove that $(ii)\Rightarrow (i)$. By Theorem~\ref{thm:MainCont}, we have $\F=\delta_{\F}\circ\M_{\F}$, where $\M_{\F}$ is nondecreasing,
idempotent (hence $\G$ satisfies (\ref{eq:Cond1C})), and continuous.
\end{proof}

\begin{remark}
If we remove condition (\ref{eq:Cond1C}) from assertion $(i)$ of Theorem~\ref{thm:TrContFct}, then $\F$ still satisfies (\ref{eq:ranFrandF}) but
may or may not satisfy (\ref{eq:Cond1C}).
\end{remark}

\section*{Acknowledgments}

The author wishes to thank the reviewer for helpful comments and suggestions. This research is supported by the internal research project
F1R-MTH-PUL-09MRDO of the University of Luxembourg.

\appendix
\section{Proof of Theorem~\ref{thm:GF-ND-Id}}
\label{app:GF-ND-Id}

We first consider a definition and two lemmas.

A subset $C$ of $\I^n$ is said to be an \emph{upper subset} if for any $\bfx\in C$ and any $\bfx'\in \I^n$, with $\bfx\leqslant\bfx'$, we have
$\bfx'\in C$. To give an example, for every $y\in\ran(\delta_{\F})$, the upper level set $\F_{>}^{-1}(y)$ is an upper subset of $\I^n$.

\begin{lemma}\label{lemma:mCd}
Let $\bfx,\bfx'\in\I^n$, with $\bfx\leqslant\bfx'$, and let $C$ be a nonempty upper subset of $\I^n$. Then $d_{\infty}(\bfx,C)\geqslant
d_{\infty}(\bfx',C)$.
\end{lemma}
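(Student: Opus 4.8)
The plan is to prove the equivalent inequality $d_{\infty}(\bfx',C)\leqslant d_{\infty}(\bfx,C)$ by showing that every point of $C$ lying close to $\bfx$ can be pushed upward to a point of $C$ lying at least as close to $\bfx'$. The geometric intuition is that, since $\bfx\leqslant\bfx'$ and $C$ is an upper subset, a near witness for $\bfx$ can be raised without leaving $C$, and the raising can be arranged so that the larger point $\bfx'$ ends up no farther from $C$ than $\bfx$ is. Note this is the correct direction: I transport witnesses \emph{from} $\bfx$ \emph{to} $\bfx'$, so that the infimum defining $d_{\infty}(\bfx',C)$ is bounded by each quantity $\|\bfx-\bfz\|_{\infty}$.

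Concretely, I would fix an arbitrary $\bfz\in C$ and set $r:=\|\bfx-\bfz\|_{\infty}$, so that $x_i-r\leqslant z_i\leqslant x_i+r$ for every $i\in[n]$. I then define $\bfz'\in\R^n$ componentwise by $z'_i:=\max(z_i,x'_i-r)$. The three things to verify are that $\bfz'\in\I^n$, that $\bfz'\in C$, and that $\|\bfx'-\bfz'\|_{\infty}\leqslant r$.

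For membership in $C$: by construction $z'_i\geqslant z_i$ for all $i$, hence $\bfz\leqslant\bfz'$; once $\bfz'\in\I^n$ is established, the upper-subset property applied to $\bfz\in C$ gives $\bfz'\in C$. For the distance bound: the definition gives $z'_i\geqslant x'_i-r$ directly, while $z_i\leqslant x_i+r\leqslant x'_i+r$ (using $x_i\leqslant x'_i$) together with $x'_i-r\leqslant x'_i+r$ yields $z'_i\leqslant x'_i+r$; hence $|x'_i-z'_i|\leqslant r$ for every $i$, so $\|\bfx'-\bfz'\|_{\infty}\leqslant r$. Combining these, $d_{\infty}(\bfx',C)\leqslant\|\bfx'-\bfz'\|_{\infty}\leqslant r=\|\bfx-\bfz\|_{\infty}$ for every $\bfz\in C$, and taking the infimum over $\bfz\in C$ (legitimate since $C\neq\varnothing$) gives $d_{\infty}(\bfx',C)\leqslant d_{\infty}(\bfx,C)$.

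The step I expect to require the most care is verifying $\bfz'\in\I^n$, since $x'_i-r$ need not lie in $\I$ on its own. Here I would use that $\I$ is an interval, hence convex: in the branch $z'_i=z_i$ we have $z'_i\in\I$ immediately, and in the branch $z'_i=x'_i-r$ we have $z_i\leqslant z'_i=x'_i-r\leqslant x'_i$, so $z'_i$ lies between the two points $z_i,x'_i\in\I$ and therefore belongs to $\I$. This domain check, rather than the arithmetic of the norm, is the only genuinely delicate point, and no appeal to any earlier result is needed.
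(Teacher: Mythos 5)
Your proof is correct, and it takes a genuinely different (more self-contained) route than the paper's. The paper's proof passes to the smallest upper subset $C^*$ of $\R^n$ containing $C$ and transports witnesses by the \emph{full} translation $\bfz\mapsto\bfz+\bfx'-\bfx$: since $d_{\infty}(\bfx,\bfz)=d_{\infty}(\bfx',\bfz+\bfx'-\bfx)$ and $\bfz+\bfx'-\bfx\in C^*$, the set of distances from $\bfx$ to $C^*$ is contained in the set of distances from $\bfx'$ to $C^*$, giving $d_{\infty}(\bfx,C^*)\geqslant d_{\infty}(\bfx',C^*)$; it then invokes $d_{\infty}(\cdot,C)=d_{\infty}(\cdot,C^*)$ on $\I^n$. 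The ambient closure $C^*$ is needed there precisely because the translated witness may leave $\I^n$. You avoid this by raising each coordinate only as much as necessary, $z'_i=\max(z_i,x'_i-r)$, so that the witness stays in $\I^n$ --- and your convexity check ($z'_i$ lies between $z_i\in\I$ and $x'_i\in\I$) is exactly the point that makes this legitimate. What your version buys is completeness: the paper asserts $d_{\infty}(\bfx',C)=d_{\infty}(\bfx',C^*)$ without proof, and its nontrivial direction (a point of $C^*$ close to $\bfx'\in\I^n$ must be matched by a point of $C$ at least as close, obtained by clamping back into $\I^n$) requires an argument of precisely the kind you give; your proof makes that step explicit while dispensing with $C^*$ altogether. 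What the paper's version buys is brevity and a clean structural statement (inclusion of distance sets under translation). Your chain $d_{\infty}(\bfx',C)\leqslant\|\bfx'-\bfz'\|_{\infty}\leqslant\|\bfx-\bfz\|_{\infty}$ for each $\bfz\in C$, followed by taking the infimum over the nonempty set $C$, is airtight.
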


\begin{proof}
Denote by $C^*$ the smallest upper subset of $\R^n$ containing $C$.  For every $\bfz\in C^*$, we have
$d_{\infty}(\bfx,\bfz)=d_{\infty}(\bfx',\bfz+\bfx'-\bfx)$ and $\bfz+\bfx'-\bfx\in C^*$. It follows that $\{d_{\infty}(\bfx,\bfz):\bfz\in
C^*\}\subseteq \{d_{\infty}(\bfx',\bfz'):\bfz'\in C^*\}$ and hence $d_{\infty}(\bfx,C)=d_{\infty}(\bfx,C^*)\geqslant
d_{\infty}(\bfx',C^*)=d_{\infty}(\bfx',C)$.
\end{proof}

\begin{lemma}\label{lemma:NDlevelS}
Assume $\F\colon\I^n\to\R$ is nondecreasing. Then any solution $\G\colon\I^n\to\I$ of Chisini's equation (\ref{eq:FuncEqFG2}) is nondecreasing
if and only if it is nondecreasing on each level set of $\F$.
\end{lemma}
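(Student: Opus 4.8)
The plan is to prove the two implications separately. The \emph{only if} direction is immediate: if $\G$ is nondecreasing on the whole of $\I^n$, then its restriction to any subset, in particular to any level set $\F^{-1}\{y\}$, is nondecreasing. So the content of the lemma is entirely in the converse, and I would spend the proof there.

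For the converse, suppose $\G$ is nondecreasing on each level set of $\F$, and fix $\bfx,\bfx'\in\I^n$ with $\bfx\leqslant\bfx'$; the goal is $\G(\bfx)\leqslant\G(\bfx')$. First I would record two facts that will do all the work. Since $\F$ is nondecreasing, so is its diagonal section $\delta_{\F}$; and since $\G$ solves Chisini's equation we have $\G(\bfx)\in\delta_{\F}^{-1}\{\F(\bfx)\}$ for every $\bfx\in\I^n$ by (\ref{eq:GindFcF}), that is, $\delta_{\F}(\G(\bfx))=\F(\bfx)$. I then split into two cases according to whether $\bfx$ and $\bfx'$ share the same $\F$-value. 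If $\F(\bfx)=\F(\bfx')$, then $\bfx$ and $\bfx'$ belong to the common level set $\F^{-1}\{\F(\bfx)\}$, and the hypothesis that $\G$ is nondecreasing on that level set gives $\G(\bfx)\leqslant\G(\bfx')$ at once.

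The case $\F(\bfx)\neq\F(\bfx')$ is the crux, and here the level-set hypothesis is \emph{not} invoked; monotonicity of $\delta_{\F}$ alone suffices. Since $\F$ is nondecreasing and $\bfx\leqslant\bfx'$, we have $\F(\bfx)\leqslant\F(\bfx')$, hence $\F(\bfx)<\F(\bfx')$. Applying the identity $\delta_{\F}\circ\G=\F$ at both points yields $\delta_{\F}(\G(\bfx))<\delta_{\F}(\G(\bfx'))$. Were $\G(\bfx)\geqslant\G(\bfx')$, monotonicity of $\delta_{\F}$ would force $\delta_{\F}(\G(\bfx))\geqslant\delta_{\F}(\G(\bfx'))$, a contradiction; hence $\G(\bfx)<\G(\bfx')$. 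In either case $\G(\bfx)\leqslant\G(\bfx')$, which establishes that $\G$ is nondecreasing on all of $\I^n$.

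I do not anticipate a genuine obstacle: the argument is a clean case split in which the two cases are handled by complementary mechanisms. The only point worth stressing is the division of labour between the two hypotheses, namely that comparable points with equal $\F$-value are governed by the level-set assumption, whereas comparable points with unequal $\F$-values are governed by the monotonicity of $\delta_{\F}$, which transfers the strict inequality $\F(\bfx)<\F(\bfx')$ into the strict inequality $\G(\bfx)<\G(\bfx')$. This is precisely why nondecreasingness on each level set, a priori a much weaker condition, already forces global nondecreasingness.
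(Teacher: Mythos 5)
Your proof is correct and follows essentially the same route as the paper: the necessity is dismissed as trivial, and the sufficiency reduces to the case $\F(\bfx)<\F(\bfx')$, where the identity $\delta_{\F}\circ\G=\F$ together with the nondecreasing monotonicity of $\delta_{\F}$ forces $\G(\bfx)<\G(\bfx')$. The only cosmetic difference is that you spell out the contrapositive step and the case split explicitly, which the paper leaves implicit.
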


\begin{proof}
The necessity is trivial. For the sufficiency, assume that the solution $\G\colon\I^n\to\I$ is nondecreasing on each level set of $\F$. Let
$\bfx,\bfx'\in\I^n$ be such that $\bfx\leqslant\bfx'$ and $\F(\bfx)<\F(\bfx')$. By Proposition~\ref{prop:SolEqFDG}, we must have
$\G(\bfx)\in\delta_{\F}^{-1}\{\F(\bfx)\}$ and $\G(\bfx')\in\delta_{\F}^{-1}\{\F(\bfx')\}$. Therefore, since $\delta_{\F}$ is nondecreasing, we
also have $\G(\bfx)<\G(\bfx')$.
\end{proof}

\begin{proof}[Proof of Theorem~\ref{thm:GF-ND-Id}]
Let us first prove that $\F=\delta_{\F}\circ\M_{\F}$ or, equivalently, that $\M_{\F}(\bfx)\in\delta_{\F}^{-1}\{\F(\bfx)\}$ for all $\bfx\in\I^n$
(see Proposition~\ref{prop:SolEqFDG}). Fix $\bfx^*\in\I^n$. By definition of $\M_{\F}$, we always have
$\M_{\F}(\bfx^*)\in\left[a_{\F}(\bfx^*),b_{\F}(\bfx^*)\right]$. We now have to prove that if
$a_{\F}(\bfx^*)\notin\delta_{\F}^{-1}\{\F(\bfx^*)\}$ (which implies $a_{\F}(\bfx^*) < b_{\F}(\bfx^*)$) then necessarily
$\M_{\F}(\bfx^*)>a_{\F}(\bfx^*)$. For the sake of contradiction, suppose that $\M_{\F}(\bfx^*)=a_{\F}(\bfx^*)$.
\begin{enumerate}
\item[(i)] If $\bfx^*\in\Omega_{\F}$ then $d_{\F}^<(\bfx^*)=0$ and hence condition (\ref{eq:Case1N}) holds. It then follows that
$\tilde{d}_{\F}^<(\bfx^*)=0$, that is $a_{\F}(\bfx^*)=\Max(\bfx^*)$. This implies $\bfx^*\leqslant a_{\F}(\bfx^*)\mathbf{1}$ and hence
$\F(\bfx^*)\leqslant \delta_{\F}(a_{\F}(\bfx^*))<\F(\bfx^*)$, a contradiction.

\item[(ii)] If $\bfx^*\notin\Omega_{\F}$ then at least one of the conditions (\ref{eq:Case1N}) and (\ref{eq:Case2N}) must hold. This implies
$a_{\F}(\bfx^*)=\Max(\bfx^*)$, again a contradiction.
\end{enumerate}
The case when $b_{\F}(\bfx)\notin\delta_{\F}^{-1}\{\F(\bfx)\}$ can be dealt with dually.

Let us now prove that $\M_{\F}$ is nondecreasing. By Lemma~\ref{lemma:NDlevelS} we only need to prove that $\M_{\F}$ is nondecreasing on each
level set of $\F$. Fix $\bfx^*\in\I^n$ and let $\bfx,\bfx'\in\F^{-1}\{\F(\bfx^*)\}$, with $\bfx\leqslant\bfx'$. We only need to show that
$\M_{\F}(\bfx)\leqslant\M_{\F}(\bfx')$.

If the set $\F_{>}^{-1}(\F(\bfx^*))$ is nonempty (which means that $d_{\F}^>(\bfx^*)<\infty$), then it is an upper subset of $\I^n$ and, by
Lemma~\ref{lemma:mCd}, we must have $d_{\F}^>(\bfx) \geqslant d_{\F}^>(\bfx')$ and $\tilde{d}_{\F}^>(\bfx) \geqslant \tilde{d}_{\F}^>(\bfx')$,
and we prove dually that $d_{\F}^<(\bfx) \leqslant d_{\F}^<(\bfx')$ and $\tilde{d}_{\F}^<(\bfx) \leqslant \tilde{d}_{\F}^<(\bfx')$. We can now
assume without loss of generality that $\F_{<}^{-1}(\F(\bfx^*))$ and $\F_{>}^{-1}(\F(\bfx^*))$ are nonempty. Assume also that conditions
(\ref{eq:Case1N}) and (\ref{eq:Case2N}) do not hold. Four exclusive cases are to be examined:
\begin{enumerate}
\item[(i)] If $\bfx,\bfx'\in\Omega_{\F}$ then, assuming $d_{\F}^<(\bfx)> 0$, we have
$$
\M_{\F}(\bfx) = a_{\F}(\bfx)+\frac{b_{\F}(\bfx)-a_{\F}(\bfx)}{\frac{d_{\F}^>(\bfx)}{d_{\F}^<(\bfx)}+1} \leqslant
a_{\F}(\bfx)+\frac{b_{\F}(\bfx)-a_{\F}(\bfx)}{\frac{d_{\F}^>(\bfx')}{d_{\F}^<(\bfx')}+1}=\M_{\F}(\bfx').
$$
If $d_{\F}^<(\bfx)=0$ then we simply have $\M_{\F}(\bfx)=a_{\F}(\bfx)=a_{\F}(\bfx')\leqslant\M_{\F}(\bfx')$.

\item[(ii)] If $\bfx,\bfx'\in\I^n\setminus\Omega_{\F}$ then $\M_{\F}(\bfx)=\frac 12 a_{\F}(\bfx)+\frac 12 b_{\F}(\bfx)=\frac 12
a_{\F}(\bfx')+\frac 12 b_{\F}(\bfx')=\M_{\F}(\bfx')$.

\item[(iii)] If $\bfx\in\Omega_{\F}$ and $\bfx'\in\I^n\setminus\Omega_{\F}$ then $d_{\F}^<(\bfx)\leqslant d_{\F}^<(\bfx')=0$ and hence
$d_{\F}^<(\bfx)=0$. Therefore, $\M_{\F}(\bfx)=a_{\F}(\bfx)=a_{\F}(\bfx')\leqslant\M_{\F}(\bfx')$.

\item[(iv)] If $\bfx\in\I^n\setminus\Omega_{\F}$ and $\bfx'\in\Omega_{\F}$ then, similarly to the previous case, we must have
$d_{\F}^>(\bfx')=0$ and hence $\M_{\F}(\bfx')=b_{\F}(\bfx')=b_{\F}(\bfx)\geqslant\M_{\F}(\bfx)$.
\end{enumerate}
The situation when any of the conditions (\ref{eq:Case1N}) and (\ref{eq:Case2N}) hold can be dealt with similarly as in case (i) above.

Let us now prove that $\M_{\F}$ is idempotent. Let $x\mathbf{1}\in\mathrm{diag}(\Omega_{\F})$. Again, we can assume that
$\F_{<}^{-1}(\F(x\mathbf{1}))$ and $\F_{>}^{-1}(\F(x\mathbf{1}))$ are nonempty. Then
$d_{\F}^<(x\mathbf{1})=\tilde{d}_{\F}^<(x\mathbf{1})=x-a_{\F}(x\mathbf{1})$ and
$d_{\F}^>(x\mathbf{1})=\tilde{d}_{\F}^>(x\mathbf{1})=b_{\F}(x\mathbf{1})-x$ and hence $\M_{\F}(x\mathbf{1})=x$. Now, let
$x\mathbf{1}\in\mathrm{diag}(\I^n\setminus\Omega_{\F})$, which means that $d_{\F}^<(x\mathbf{1})=d_{\F}^>(x\mathbf{1})=0$. Then
$\delta_{\F}^{-1}\{\F(x\mathbf{1})\}=\delta_{\F}^{-1}\{\delta_{\F}(x)\}$ is the singleton $\{x\}$. Indeed, suppose on the contrary that
$\delta_{\F}(x')=\delta_{\F}(x)$ for some $x'>x$. Then $\F$ would be constant on $[x,x']^n$ and hence $d_{\F}^>(x\mathbf{1})>0$, a
contradiction. Therefore, $\M_{\F}(x\mathbf{1})=x$.
\end{proof}


\end{document}